\newcounter{dummy} \numberwithin{dummy}{section}
\newcommand{\mc}{\mathcal}
\newcommand{\mb}{\mathbb}
\newtheorem{prop}[dummy]{Proposition}
\newtheorem{lemma}[dummy]{Lemma}
\newtheorem{coro}[dummy]{Corollary}
\newtheorem{defin}[dummy]{Definition}
\newtheorem{example}[dummy]{Example}
\newtheorem{remark}[dummy]{Remark}
\newtheorem{setup}[dummy]{Setup}
\newcommand*{\rom}[1]{\expandafter\@slowromancap\romannumeral #1@}
\DeclareMathOperator{\Tr}{tr}
\DeclarePairedDelimiter\floor{\lfloor}{\rfloor}
\begin{document}

\bibliographystyle{imsart-nameyear}

\begin{frontmatter}

\title{New Tests of Uniformity on the Compact Classical Groups as Diagnostics for Weak-$^*$ Mixing of Markov Chains}
\runtitle{New tests of uniformity on the compact classical groups}

\begin{aug}
 \author{\fnms{Amir} \snm{Sepehri}\corref{}\ead[label=e1]{asepehri@stanford.edu}\thanksref{t1}}
\thankstext{t1}{This work is part of author's PhD dissertation at Stanford University} 
 \address{Department of Statistics\\ Sequoia Hall, Stanford, CA 94305, USA\\ \printead{e1}}
 \affiliation{Stanford University}

\runauthor{Amir Sepehri}
\end{aug}

\begin{abstract}
This paper introduces two new families of non-parametric tests of goodness-of-fit on the compact classical groups. 
One of them is a family of tests for the eigenvalue distribution induced by the uniform distribution, which is 
consistent against all fixed alternatives. The other is a family of tests 
for the uniform distribution on the entire group, which is again consistent against all fixed alternatives. The construction of these tests heavily employs facts and techniques from the representation theory of compact groups. In particular, new Cauchy identities are derived and proved for the characters of compact classical groups, in order to accomodate the computation of the test statistic.
We find the 
asymptotic distribution under the null and general alternatives. The tests are proved to be asymptotically admissible. Local power is derived and the global properties of the power function against local alternatives are explored.

The new tests are validated on two random walks for which the mixing-time is studied in the literature. The new tests, and several others, are applied to the Markov chain sampler proposed by \cite{jones2011randomized}, providing strong evidence supporting the claim that the sampler mixes quickly.
\end{abstract}

\begin{keyword}[class=MSC]
\kwd{62G10}
\kwd{60B15}
\kwd{62M15}
\kwd{20C15}
\end{keyword}

\begin{keyword}
\kwd{Goodness-of-fit}
\kwd{Non-parametric hypothesis testing}
\kwd{Representation theory of compact groups}
\kwd{Cauchy identity}
\kwd{Spectral analysis}
\kwd{Random rotation generators}
\kwd{Mixing-diagnostics for Markov Chains}
\end{keyword}
\end{frontmatter}

\section{Introduction}\label{Sec:Intro}
Recent work of \citet{jones2011randomized} suggested a Markov chain on the orthogonal group that is supposedly used to sample from the uniform distribution. They prescribe a particular number of steps after which the chain is mixed, resulting in a fast random rotation generator which is at the core of several successful randomized data analysis algorithms. Examples include approximate algorithms for highly over-determined linear regression \citep{rokhlin2008fast}, low-rank matrix approximation \citep{liberty2007randomized}, and very high dimensional nearest neighbor analysis \citep{jones2011randomized}. The new sampler could offer a significant reduction in computational cost compared to the best exact algorithm in the literature (see section \ref{Sec:RokhGen}). In applications where multiplication of a random orthogonal matrix with many vectors is needed, the new sampler is much faster than conventional random rotation generators.

It is desirable to have outputs that are approximately uniformly distributed. This is not just a mere theoretical preference; it is a matter of practical importance. In fact, as discussed in Observation 5.1 in the supplementary material \cite{sepehri2017supplement}, the performance of the approximate nearest neighbor algorithm was improved by using a uniform sampler compared to non-uniform samplers (the approximate nearest neighbor algorithm is sketched in section 3 of the supplementary material \cite{sepehri2017supplement}). Therefore, one needs to investigate the mixing properties of the new sampler.
Unfortunately, due to complex construction of the new sampler, analytical study of the mixing-time seems to be impractical. This paper suggests to numerically study the mixing-time of the new sampler using statistical tests of goodness-of-fit.

There is a sizable literature on goodness-of-fit testing on non-Euclidean spaces. Major work has been devoted to the development of goodness-of-fit tests on the circle and sphere (see \cite{rayleigh1880xii,ajne1968simple,beran1968testing,watson1961goodness,watson1962goodness,watson1967another,wellner1979permutation}). The literature on goodness-of-fit testing for the orthogonal group has been limited to three dimensions; two commonly used tests for three dimensional rotations are Downs' generalization 
of the Rayleigh test \citep{downs1972orientation}  and
Prentice's generalization of Gin\'e's $G_n$ test \citep{prentice1978invariant, gine1975invariant}. For a more detailed review of the literature see \cite{mardiadirectional}. In an important development in high dimensional setting, \cite{coram2003new} proposed a family of 
statistical tests for the eigenvalue distribution induced from the Haar measure on the unitary 
group, $U(n)$. Their tests are relatively easy to compute and consistent against all fixed alternatives. One of the new tests in this paper was inspired by the tests of \cite{coram2003new}.

This paper settles the question about the mixing-time of the new sampler using statistical tests. Various known tests are applied (see sections \ref{Sec:RokhGen} and \ref{Sec:KnownTest}), confirming that the new sampler mixes quickly. New tests are introduced (see sections \ref{Sec:TestsBasedOnEigenValues} and \ref{Sec:TestBeyondEigenValues}) and validated using the benchmark examples of section \ref{sec:Benchmark}. The new tests are applied to the new sampler and the results are compared to other tests in section \ref{Sec:Numeric}. The results are in agreement with the claim that the new sampler mixes quickly, i.e.\ after a given number of steps. Local properties, including local power, of the new tests are studied in section \ref{Sec:LocalAsymp}. Similar tests are stated for the other compact groups in section \ref{Sec:OtherGroups}. A further test based on the properties of trace is presented in section 6 of the supplementary material \cite{sepehri2017supplement}.

\subsection{Pseudorandom Orthogonal Transformations.}\label{Sec:RokhGen}
In their recent work, \cite{jones2011randomized} proposed a pseudorandom orthogonal matrix generator which consecutively applies two dimensional rotations in coordinate planes, preconditioned using a Fourier type matrix. This is formally described below.
Suppose $n, M_1, M_2$ are positive integers. Define a pseudorandom $n$-dimensional orthogonal transformation $\Theta$ as a composition of $M_1+M_2+1$ orthogonal operators 
\begin{align}\label{Alg:RokhGen}
\Theta = \left( \Pi_{i=1}^{M_1} Q_i \cdot P_i \right) \cdot F^n \cdot \left( \Pi_{j=1}^{M_2} Q_j P_j \right).
\end{align}
Each $P_i$ and $P_j$ is a uniformly distributed $n \times n$ permutation matrix, independent of others. That is, each $P_i$ corresponds to a permutation $p_i$ of $\{1,\ldots ,n\}$ and $P_i$ acts on vectors as follows
\begin{align*}
(P_i v)_j = v_{p_i(j)}.
\end{align*}
Each $Q_j$ is defined as 
\begin{align*}
Q_j = Q_{n-1,j}\cdot Q_{n-2,j}\cdots Q_{1,j},
\end{align*}
where $ Q_{l,j}$ is a uniform two dimensional rotation in the plane generated by the $l$-th and $l+1$-th coordinates. That is, $(Q_{l,j} v)_i  = v_i$ for $i \neq l,l+1$ and
\begin{align*}
(Q_{l,j} v)_{l}  &=  \cos \theta_{l,j} v_l + \sin \theta_{l,j} v_{l+1}\\
(Q_{l,j} v)_{l+1}  &=  -\sin \theta_{l,j} v_l + \cos \theta_{l,j} v_{l+1} ,
\end{align*}
where $\theta_{l,j}$ is a uniform number in $[0,2\pi]$. All $Q_i$ and $Q_j$ are independent of each other.
 Lastly, the linear operator $F^n$ is defined as follows. Let $d = \floor*{\frac{n}{2}}$ and $T$ be the following $d \times d$ matrix:
\begin{align*}
T_{k,l} = \frac{1}{\sqrt{d}} \exp\left[ - \frac{2\pi i (k-1)(l-1)}{d} \right].
\end{align*}
Define $Z: \mb{R}^{2d} \rightarrow \mb{C}^{d}$ as 
\begin{align*}
[Z(x)]_l = x_{2l-1}+ i x_{2l}.
\end{align*}
For $n$ even, define $F^n$ as 
\begin{align}\label{Def:Kernel}
F^n = Z^{-1} \cdot T\cdot Z.
\end{align}
If $n$ is odd, $F^n$ fixes the last coordinate of $x$, and $F^{n-1}$ defined in (\ref{Def:Kernel}) is applied to the first $n-1$ coordinates. The cost of applying $\Theta$ to vector $x \in \mb{R}^n$ is of order $O(n(\log n +M_1 +M_2))$,  because the cost of applying the operator $F^n$ is $O(n\log n)$ and each operator $Q_j\cdot P_j$ costs $O(n)$. It is claimed in \cite{jones2011randomized} that if $M_1+M_2 = O(\log n)$, then the distribution of $\Theta$ is close to the uniform distribution on the set of all $n \times n$ orthogonal matrices. This makes the new sampler much faster than the state of the art Subgroup Algorithm of \citet{diaconis1987subgroup}, which is an $O(n^3)$ algorithm for generating uniform $n\times n$ rotation matrices. However, it remain to be investigated whether the distribution of the output is close to the uniform distribution. Throughout the paper, the `mixing time' or the number of steps required for the Jones-Osipov-Rokhlin sampler to mix refers to the quantity $M_1+M_2$.

\subsection{Benchmark Examples}\label{sec:Benchmark}
Two benchmark examples of random walks on $SO(n)$ and their mixing properties are used as sanity check for the tests considered in this paper.
\begin{enumerate}
\item\emph{Kac's random walk.} 
The standard \textit{Kac's random walk} $\{O_k\}$ on $SO(n)$ is the defined as follows:
\begin{align*}
O_{k+1} = R_{i,j}^{(k)}(\theta) O_k,
\end{align*}
where $R_{i,j}^{(k)}(\theta)$ is an elementary rotation with angle $\theta$ in the plane generated by the $i$-th and $j$-th coordinate axes, where $\{i,j\}$ is uniformly chosen among all pairs from $\{1,\ldots,n\}$ and $\theta$ uniformly random in $[0,2\pi)$. This walk was introduced as part of Kac's effort to simplify Boltzmann's proof of the H-theorem \citep{kac1959probability}
and Hastings's simulations of random rotations \citep{hastings1970monte}. Convergence of the Kac's random walk has been studied by various authors in different senses. In the current discussion, the focus is on convergence in Wasserstein distance which metrizes the weak convergence; for a review of the literature see \citet{pak2007convergence,oliveira2009convergence,pillai2016mixing}. The best known bound on the mixing-time in Wasserstein distance is obtained by \citet{oliveira2009convergence}, providing an upper bound of order $n^2\log n$ on the mixing-time which is at most a factor $\log n$ away from optimal. For $n=51$, which is the case studied numerically in this paper, $n^2\log n \approx 10000$, but the constants are not known and the actual mixing time could be much smaller of larger than this value.

\item \emph{Product of random reflections.} 

As described in \citet{diaconis2003patterns}, the following random walk on $O(n)$ arose in a telephone encryption problem. At each step, the current orthogonal matrix is multiplied by a random reflection, a matrix of the form $I-2u^Tu$ for a uniform unit vector $u\in \mb{S}^{n-1}$. 

The mixing-time for this chain has been studied carefully in \citet{diaconis1986products,porod1996cut,rosenthal1994random}, proving that $\frac{1}{2} n \log n + cn$ steps are necessary and sufficient for convergence of the reflection walk to the uniform distribution in total variation distance. In fact, \citet{porod1996cut} gives explicit lower- and upper-bounds for the total variation distance between this Markov chain and the Haar measure as a function of $c$, but the bounds are not tight. For $n=51$, $\frac{1}{2}n \log n \approx 100$.
\end{enumerate}

\section{Some Already Known Tests}\label{Sec:KnownTest}
Two important tests for uniformity on $SO(3)$, the Rayleigh's test and the Gine's test, are reviewed in this section. The application to the examples of the previous section is demonstrated.
\subsection{Rayleigh's test}
Perhaps the first test of uniformity on $SO(3)$ was introduced by \citet{rayleigh1880xii}. Given data $g_1,\ldots,g_N \in SO(3)$ define
\begin{align*}
T_R = 3 N \Tr (\bar{g}^T \bar{g}),
\end{align*}
where
\begin{align*}
\bar{g} = \frac{1}{N} \sum_{i=1}^N g_i.
\end{align*}
The Rayleigh's test for uniformity rejects for large values of $T_R$. This can be directly generalized to the higher dimensional case. For any $n \in \mb{N}$ and $g_1,\ldots,g_N \in SO(n)$ define 
\begin{align*}
T_R = n N \Tr (\bar{g}^T \bar{g}),
\end{align*}
where
\begin{align*}
\bar{g} = \frac{1}{N} \sum_{i=1}^N g_i.
\end{align*}

The Rayleigh's test was applied to the benchmark examples of the previous section under the following setup. 
\begin{setup}\label{setup}
The sample size is $N=200$ and the dimension is $n=51$. Each test statistic is computed on 1000 independent repetitions. This setup will be used throughout the paper. All the histograms in this paper are illustrated in blue under the null and in red under the alternatives.
\end{setup}
Figure \ref{Fig:RaylieghRef} illustrates the histograms of the Rayleigh's statistics computed on the product of random reflections and Kac's walk, with that corresponding to the uniform distribution overlaid. Throughout the paper, in all similar figures the color blue corresponds to the Haar distributed samples an d the color colar corresponds to the alternative.

\begin{figure}[h!]
  \centering
  \begin{minipage}[b]{0.24\textwidth}
    \includegraphics[width=\textwidth]{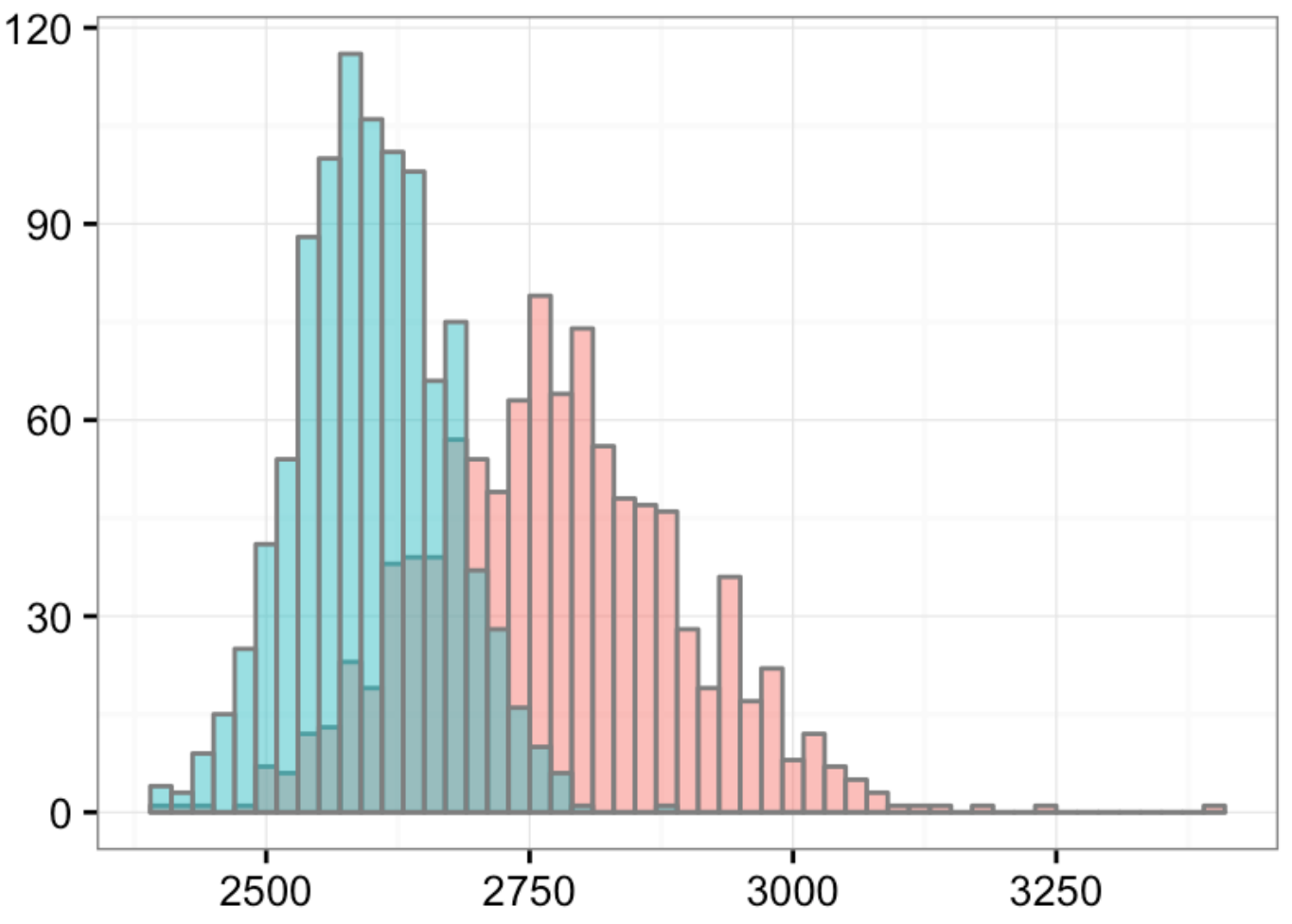}
  \end{minipage}
  \hfill
  \begin{minipage}[b]{0.24\textwidth}
    \includegraphics[width=\textwidth]{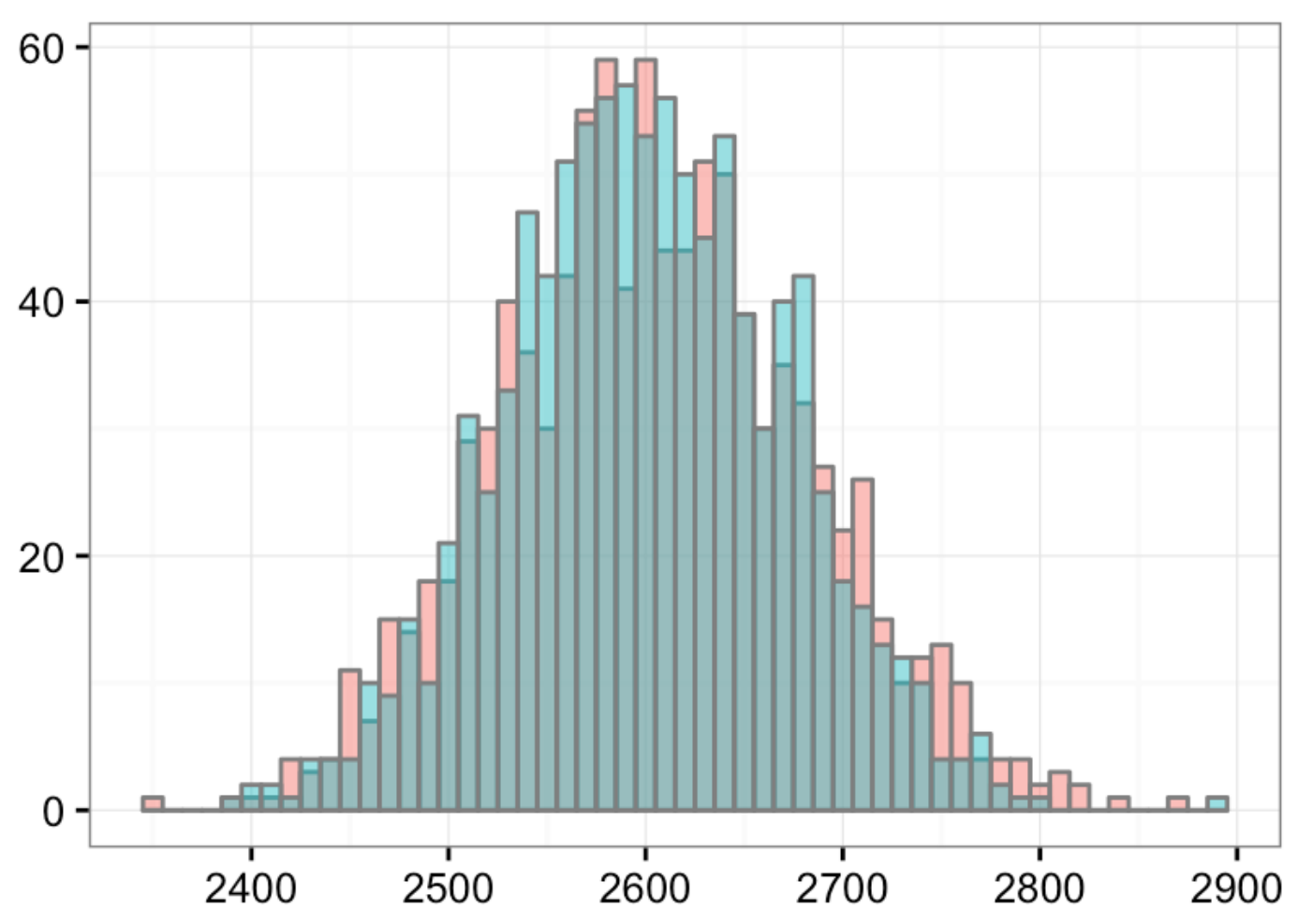}
  \end{minipage}\hfill
  \begin{minipage}[b]{0.24\textwidth}
    \includegraphics[width=\textwidth]{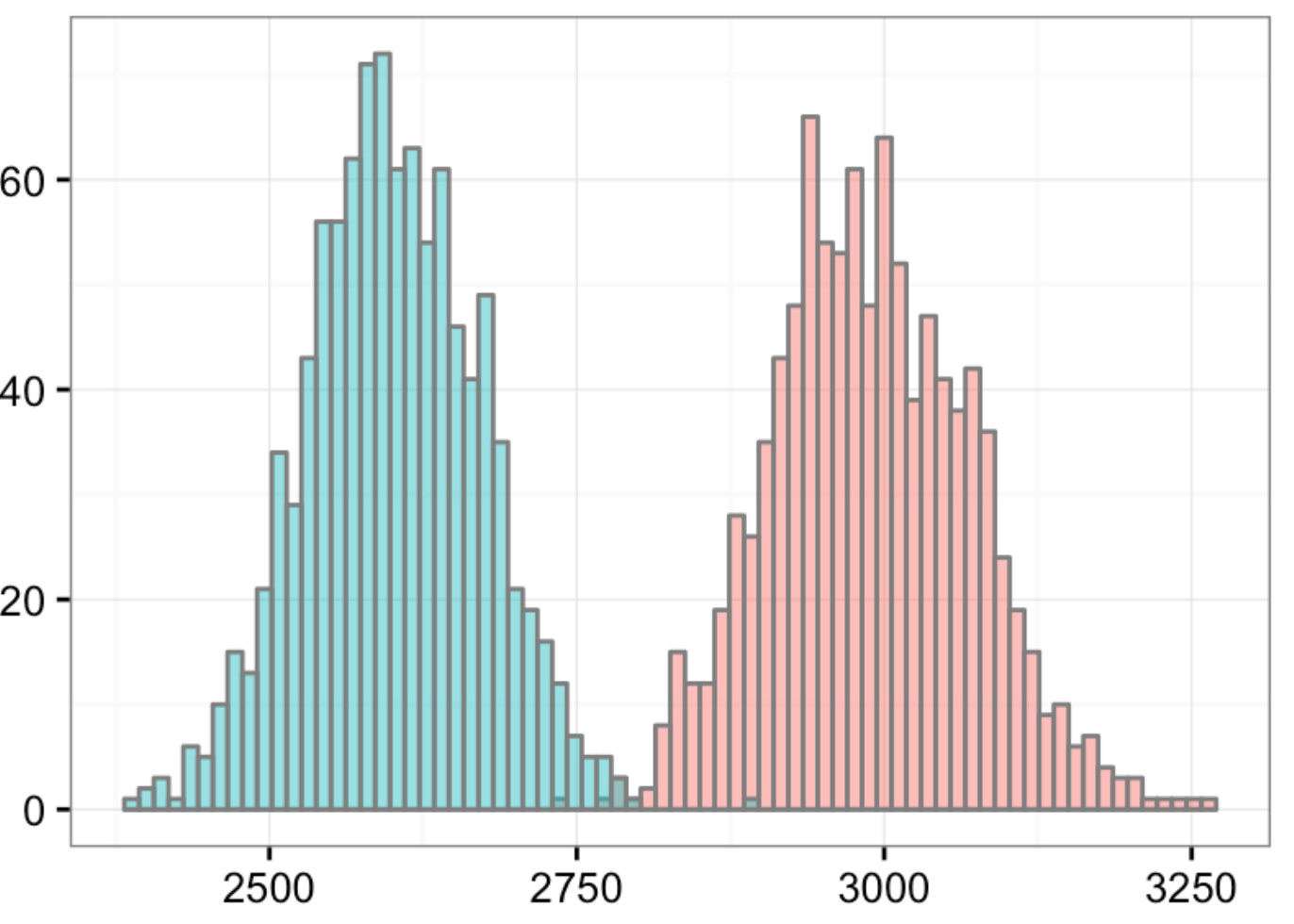}
  \end{minipage}
  \hfill
  \begin{minipage}[b]{0.24\textwidth}
    \includegraphics[width=\textwidth]{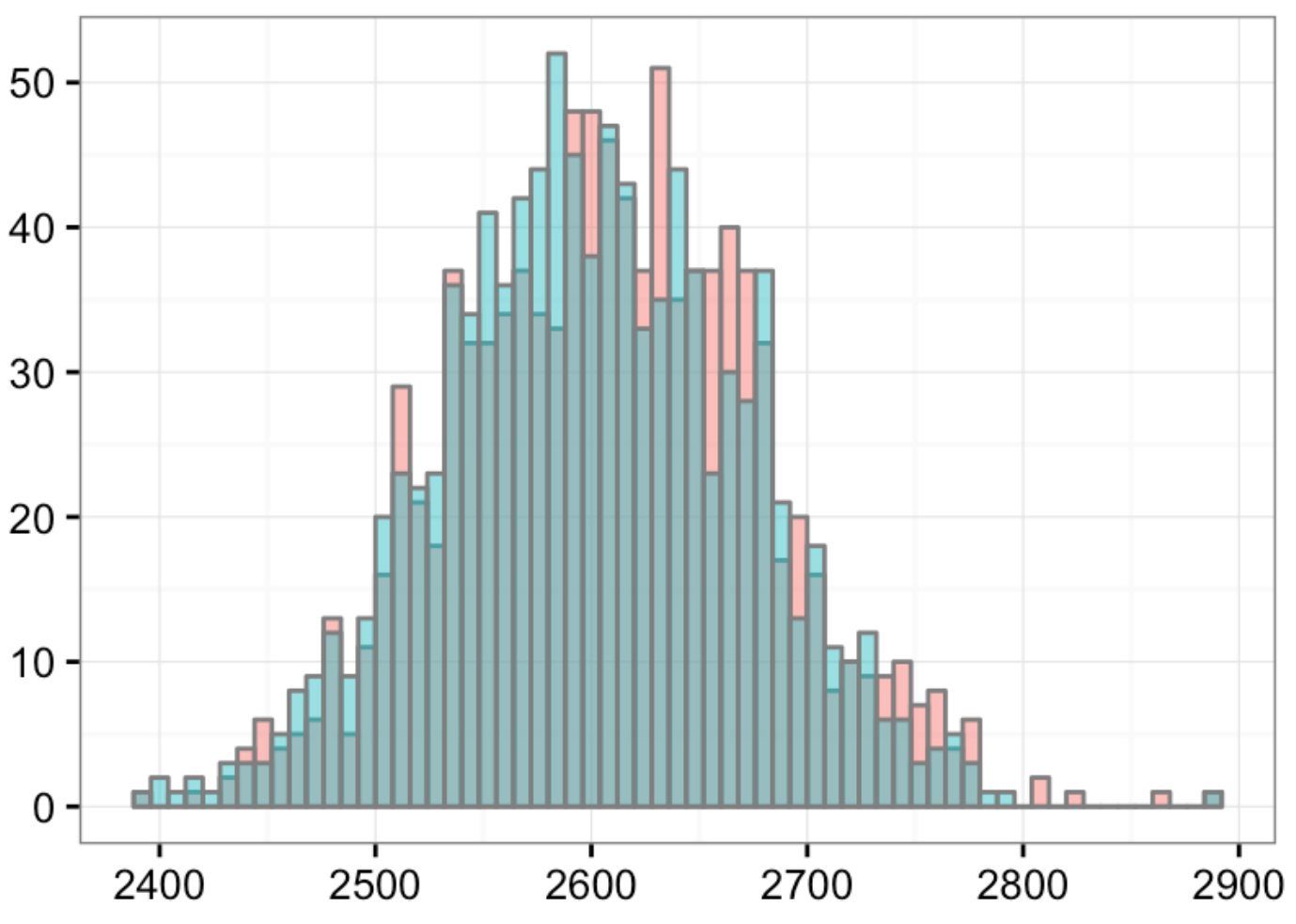}
  \end{minipage}
  \caption{Histograms of Rayleigh's statistic under Setup \ref{setup}. The alternatives are (left to right) the Kac's walk after 100 steps, after 150 steps, the product of 90 random reflections, and 140 random reflections.\label{Fig:RaylieghRef}}
\end{figure}

The Rayleigh's test does not seem to have remarkable powerful on either examples. In particular, the Rayleigh's test fails to reject the null hypothesis even after 150 steps of the Kac's walk. The Anderson-Darling p-value for the 1000 values of the Rayleigh's statistic after different number of steps are given in Tables \ref{table:RayleighRef} and \ref{table:RayleighKac} for the product of random reflections and the Kac's walk.\\
\begin{table}[h!]
\caption{$p$-values corresponding to Rayleigh's test on iterated random reflections}
\label{table:RayleighRef}
\centering
\resizebox{\columnwidth}{!}{%
\begin{tabular}{lcccccccccc}
\hline
$\#$ of steps & \multicolumn{1}{c}{\begin{tabular}{@{}c@{}}50\end{tabular}}& \multicolumn{1}{c}{\begin{tabular}{@{}c@{}}75\end{tabular}} & \multicolumn{1}{c}{\begin{tabular}{@{}c@{}}90\end{tabular}} & \multicolumn{1}{c}{\begin{tabular}{@{}c@{}}100\end{tabular}}&\multicolumn{1}{c}{\begin{tabular}{@{}c@{}}110\end{tabular}} &\multicolumn{1}{c}{\begin{tabular}{@{}c@{}}125  \end{tabular}} &\multicolumn{1}{c}{\begin{tabular}{@{}c@{}}140\end{tabular}}& \multicolumn{1}{c}{\begin{tabular}{@{}c@{}}150\end{tabular}}& \multicolumn{1}{c}{\begin{tabular}{@{}c@{}}175\end{tabular}}& \multicolumn{1}{c}{\begin{tabular}{@{}c@{}}200\end{tabular}}  \\
\hline
 A-D test &$\ll$1e-32 & $\ll$1e-32 & $\ll$1e-32 & $\ll$1e-32 & $\ll$1e-32 & 4.1e-16 & 0.03 & 0.37 & 0.84 & 0.39\\
 \hline
\end{tabular}
}
\end{table}
\begin{table}[h!]
\caption{$p$-values corresponding to Rayleigh's test on Kac's walk}
\label{table:RayleighKac}
\centering
\resizebox{\columnwidth}{!}{%
\begin{tabular}{lccccccccc}
\hline
$\#$ of steps & \multicolumn{1}{c}{\begin{tabular}{@{}c@{}}100\end{tabular}}& \multicolumn{1}{c}{\begin{tabular}{@{}c@{}}150\end{tabular}} & \multicolumn{1}{c}{\begin{tabular}{@{}c@{}}200\end{tabular}} & \multicolumn{1}{c}{\begin{tabular}{@{}c@{}}250\end{tabular}}&\multicolumn{1}{c}{\begin{tabular}{@{}c@{}}300\end{tabular}} &\multicolumn{1}{c}{\begin{tabular}{@{}c@{}}350  \end{tabular}} &\multicolumn{1}{c}{\begin{tabular}{@{}c@{}}400\end{tabular}}& \multicolumn{1}{c}{\begin{tabular}{@{}c@{}}450\end{tabular}}& \multicolumn{1}{c}{\begin{tabular}{@{}c@{}}500\end{tabular}}\\
\hline
 A-D test &$\ll$1e-32 & 0.08 & 0.58 & 0.23 & 0.23 & 0.86 & 0.70 & 0.82 & 0.89\\
 \hline
\end{tabular}
}
\end{table}

Applying Rayleigh's test to samples generated by the new sampler of \citet{jones2011randomized} provides no evidence for departure from uniformity, after only one iteration. The p-value of the Anderson-Darling test under Setup \ref{setup} is $0.35$.

\subsection{Gine's test}
Another important test of uniformity on $SO(3)$ was introduced by \citet{gine1975invariant}. Given data $g_1,\ldots,g_N \in SO(3)$, define
\begin{align*}
T_G = \frac{1}{N} \sum_{i=1}^N \sum_{j=1}^N \sqrt{\Tr (I - g_i^Tg_j)}.
\end{align*}
Gine's test rejects for large values of $T_G$. It is consistent against all fixed alternatives on $SO(3)$, but not in any higher dimensions. The corresponding test was carried out on the benchmark examples and the new sampler. Gine's tests seems to be more powerful than the Rayleigh's test on these examples. Histograms of the values of the Gine's statistic are illustrated in Figure \ref{Fig:GineRef}.
\begin{figure}[h!]
  \centering
  \begin{minipage}[b]{0.24\textwidth}
    \includegraphics[width=\textwidth]{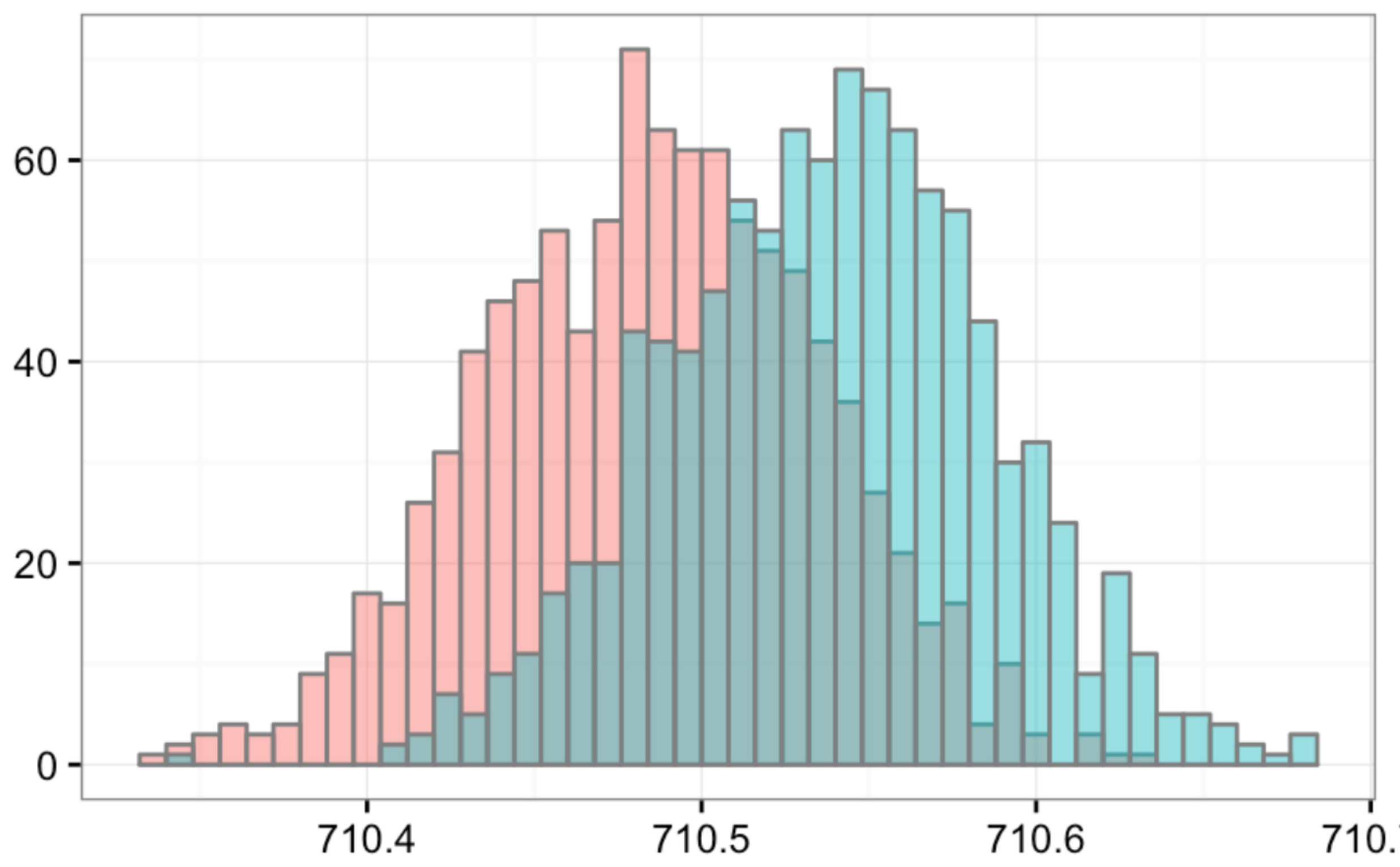}
  \end{minipage}
  \hfill
  \begin{minipage}[b]{0.24\textwidth}
    \includegraphics[width=\textwidth]{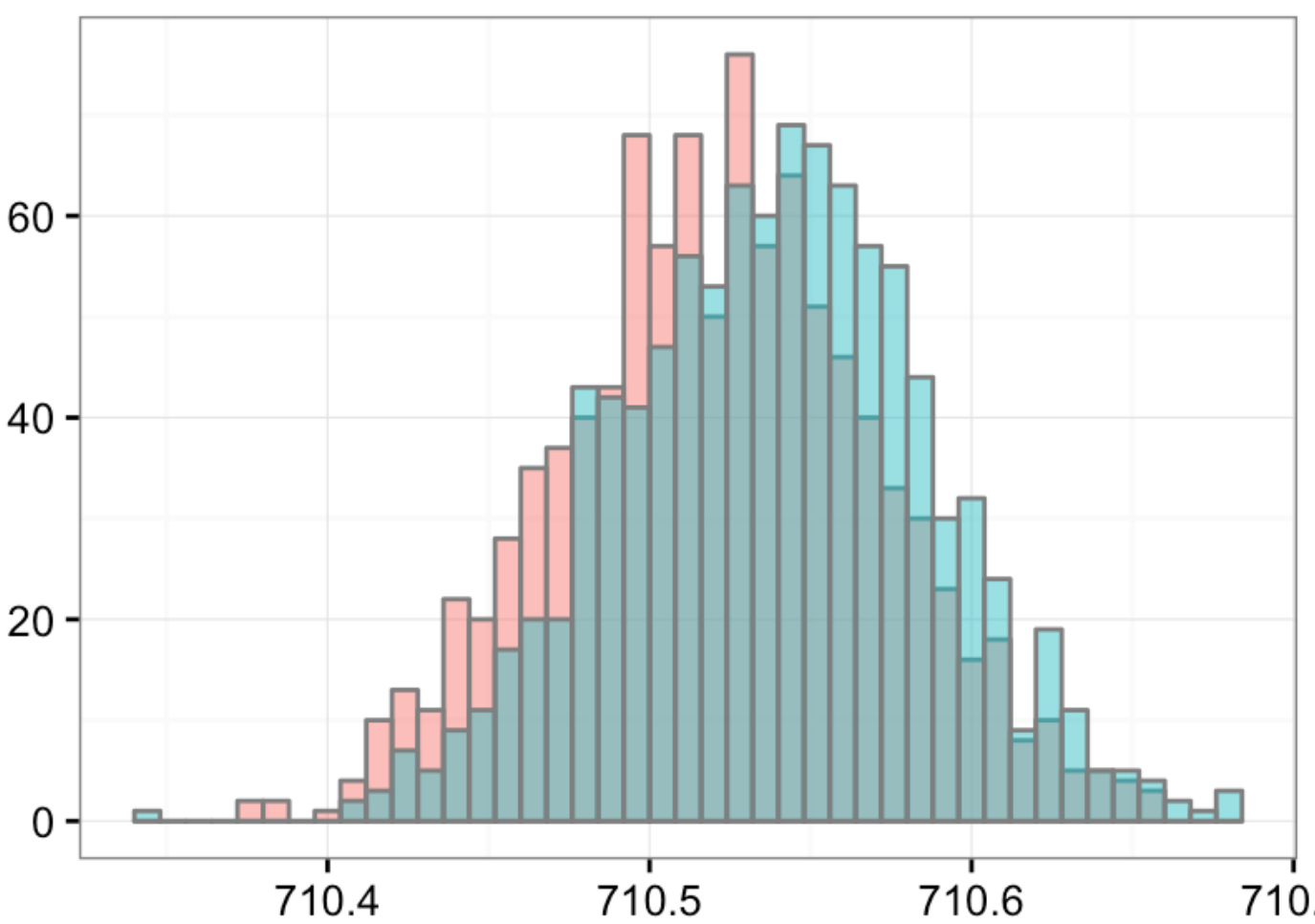}
  \end{minipage}\hfill
   \begin{minipage}[b]{0.24\textwidth}
    \includegraphics[width=\textwidth]{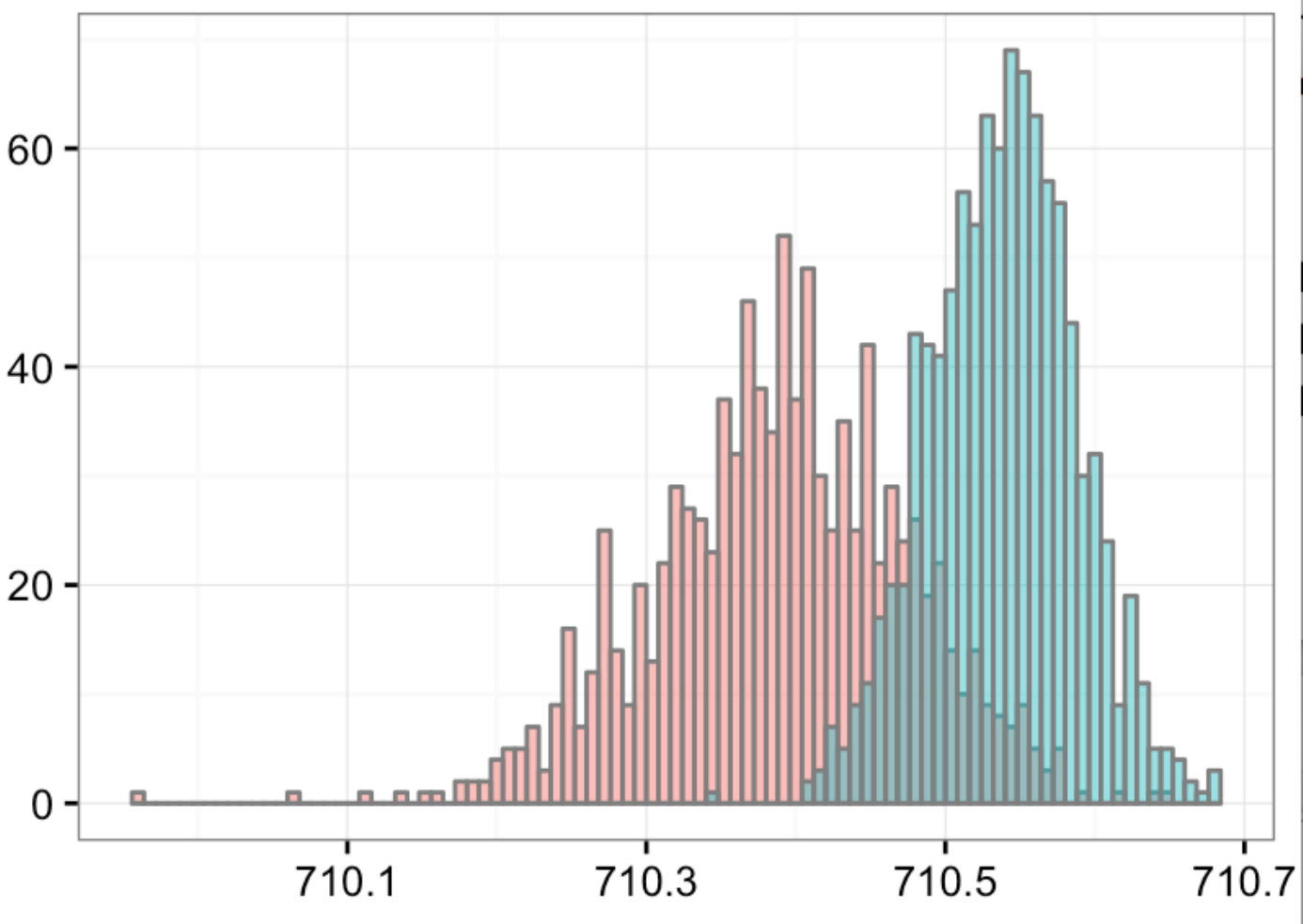}
  \end{minipage}
  \hfill
  \begin{minipage}[b]{0.24\textwidth}
    \includegraphics[width=\textwidth]{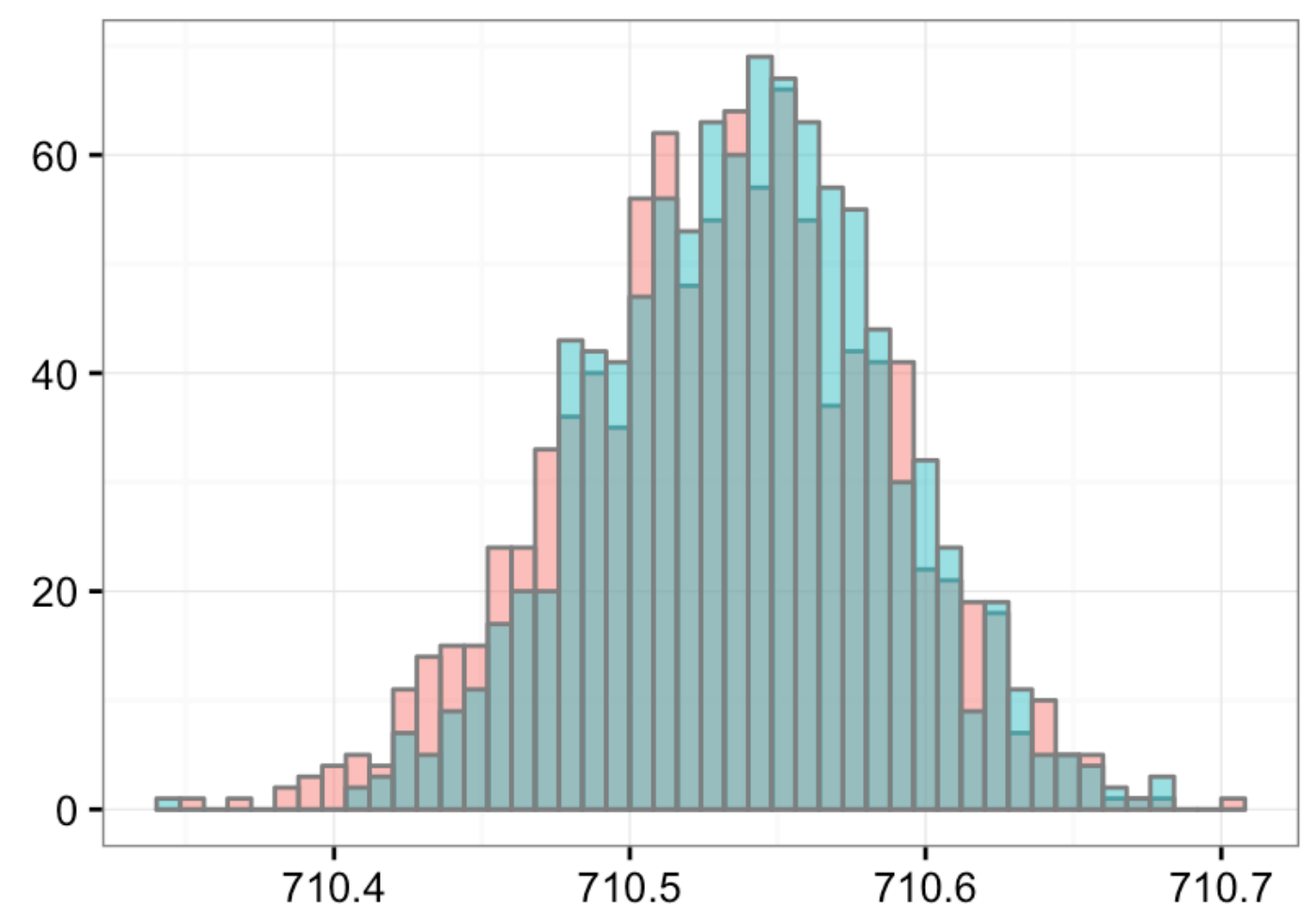}
  \end{minipage}
  \caption{Histogram of Gine's statistic under Setup \ref{setup}. The alternatives are (left to right) product of 110 random reflections, 125 random reflections, the Kac's walk after 100 steps, and 150 steps.\label{Fig:GineRef}}
\end{figure}

Applying the Gine's test to the new sampler provides no evidence for departure from the null. The Anderson-Darling p-values after one and two iterations of the sampler are $0.58$ and $0.45$, respectively.

\section{Tests Based on Eigenvalues}\label{Sec:TestsBasedOnEigenValues}
The new sampler has passed all the tests considered in the previous sections. In this section, various new tests based on the eigenvalues are introduced and applied to the benchmark examples as well as the new sampler.

\subsection{A test based on exponential families}
The joint density of the eigenvalues of a uniformly random $g \in SO(2n+1)$ is given by Weyl \cite[page 224]{weyl1946classical} as
\begin{align}\label{JontDensityEigenvalues}
f(e^{\pm i\theta_1},\ldots , e^{\pm i\theta_n}) \propto \prod_i \sin^2 (\frac{\theta_i}{2}) \prod_{i<j} |\cos \theta_i - \cos \theta_j|^2,
\end{align}
where $(1,e^{\pm i\theta_1},\ldots , e^{\pm i\theta_n})$ are eigenvalues of $g$. By a change of variables $x_i = \cos \theta_i$, the density, in terms of $(x_1,\ldots ,x_n)\in [-1,1]^n$, becomes
\begin{align}\label{Eq:Density}
f(x_1, \ldots , x_n) \propto \prod_{i<j} |x_i - x_j|^2 \prod_i \frac{\sqrt{1-x_i}}{\sqrt{1+x_i}}.
\end{align}
The density $f$ can be embedded in the following exponential family:
\begin{align}\label{Def:ExpFam}
f_{\gamma, \alpha, \beta} (x_1, \ldots , x_n) \propto \prod_{i<j} |x_i - x_j|^{2\gamma} \prod_i (1-x_i)^{\alpha-1}(1+x_i)^{\beta -1}.
\end{align}
The normalizing constant is given by Selberg's integral \cite[pg. 320, eqn. (17.5.9)]{mehta2004random} as
\begin{align*}
&\int_{[-1,1]^n} \prod_{i<j} |x_i - x_j|^{2\gamma} \prod_i (1-x_i)^{\alpha-1}(1+x_i)^{\beta -1} dx_1 \ldots dx_n \\&=
2^{\gamma n(n-1)+ n (\alpha + \beta -1)} \prod_{j=0}^{n-1} \frac{\Gamma(1+\gamma + j \gamma)\Gamma (\alpha + j \gamma) \Gamma (\beta + j \gamma)}{\Gamma(1+\gamma) \Gamma (\alpha + \beta + \gamma (n+j -1))}.
\end{align*}
The density (\ref{Eq:Density}) is the special case of (\ref{Def:ExpFam}) for $\gamma_0 =1$ , $\alpha_0 = 3/2$, and $\beta_0 = 1/2$. We abuse the notation to denote by $f_{1,\frac{3}{2},\frac{1}{2}}$ both densities (\ref{JontDensityEigenvalues}) and (\ref{Eq:Density}).
Recall that the eigenvalues of a uniform orthogonal matrix $g$ are placed quite regularly on the unit circle. For example, the trace of $g$ is approximately normal with mean zero and variance one \citep{diaconis1990trace}; whereas, for uniformly distributed points on the unit circle, say in conjugate pairs, because of the Law of Large Number and the Central Limit Theorem, the sum has mean zero and variance of order $O(n)$. In particular, the magnitude of the sum is of order $O(\sqrt{n})$. The family of densities $f_{\gamma, \alpha, \beta}$ models the regularity of the configuration of the points on the circle. The case of $\gamma = 0, \alpha = \beta = 1/2$ of (\ref{Def:ExpFam}) corresponds to $\theta_i$ being independent uniform on $[0,\pi]$. As $\gamma$ tends to infinity the points $e^{i\theta_j}$ become evenly placed on the semicircle. This is illustrated in Figure \ref{Fig:EigSpacing}.

\begin{figure}[h!]
  \centering
  \begin{minipage}[b]{0.32\textwidth}
    \includegraphics[width=\textwidth]{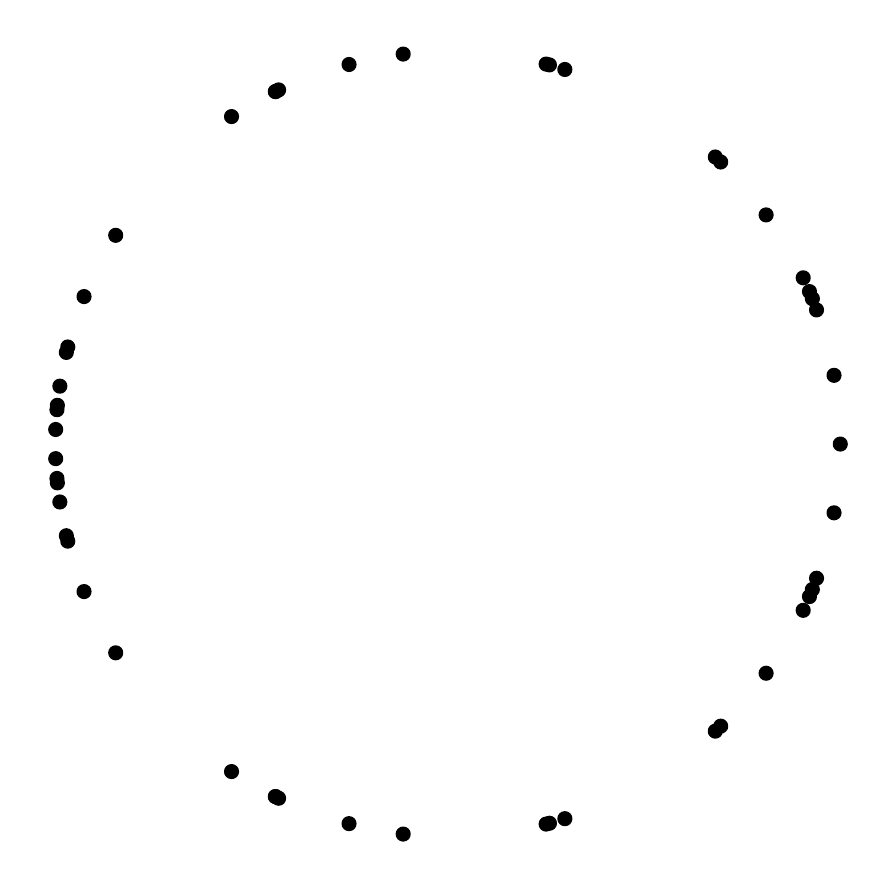}
  \end{minipage}
  \hfill
  \begin{minipage}[b]{0.32\textwidth}
    \includegraphics[width=\textwidth]{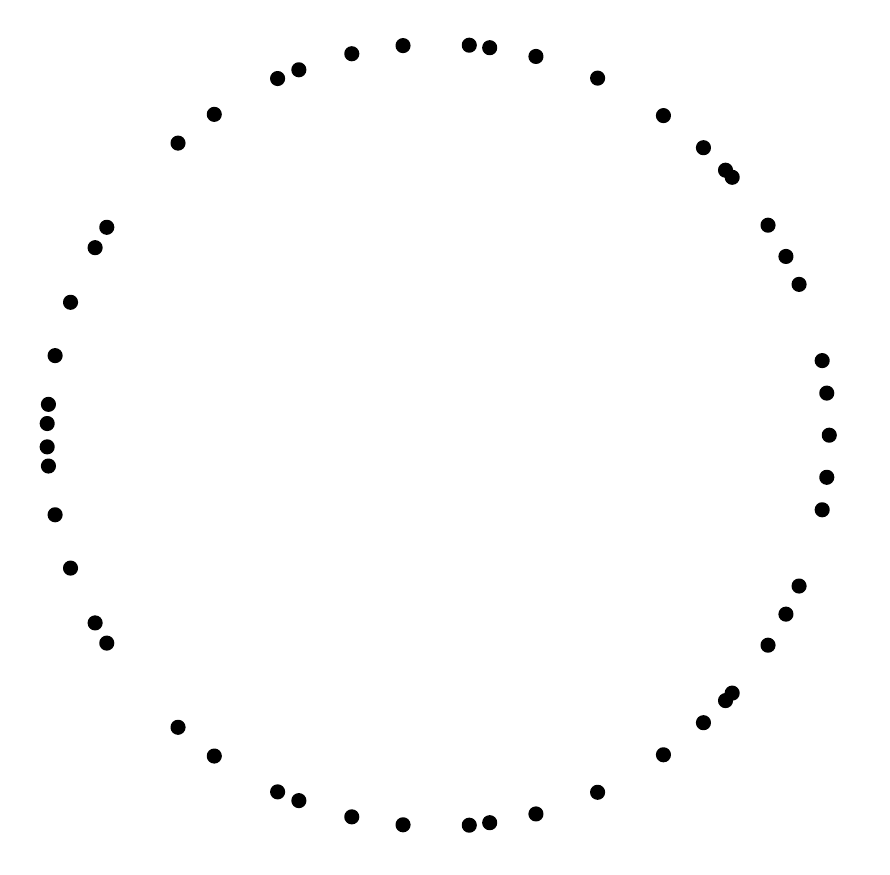}
  \end{minipage}
  \hfill
   \begin{minipage}[b]{0.32\textwidth}
    \includegraphics[width=\textwidth]{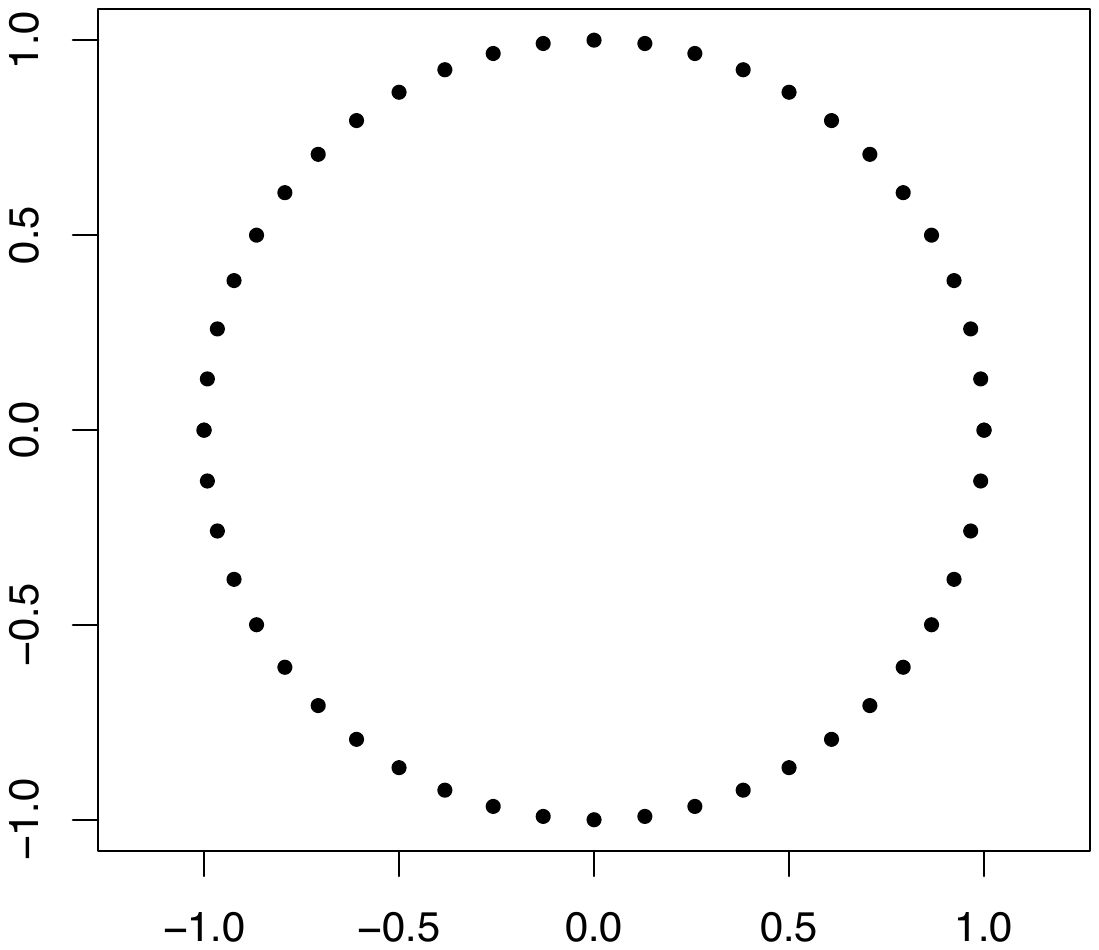}
  \end{minipage}
  \caption{left: 25 uniform points on the circle and their conjugates, middle: eigenvalues of a uniform $51\times 51$ orthogonal matrix, right: 51 evenly placed points on the circle.\label{Fig:EigSpacing}. The sum of all complex numbers is approximately: 7.8 on the left, 0.78 in the middle, and 0 on the right.}
\end{figure}

Testing for the Haar measure in this parametric family translates to
\begin{align}
H_0: (\gamma, \alpha, \beta) = (\gamma_0, \alpha_0, \beta_0) \quad \text{vs} \quad H_1: (\gamma, \alpha, \beta) \neq (\gamma_0, \alpha_0, \beta_0) .
\end{align}
Unfortunately, there is no uniformly most powerful test available in this setting. However, the problem fits into the framework of Asymptotically Normal Experiments of Le Cam. This allows for construction of an asymptotically \textit{maximin} optimal test as follows.
Define $T_1(x) = 2\sum_{i<j} \log |x_i-x_j|$, $T_2(x) = \sum_i \log(1-x_i)$, and $T_3(x) = \sum_i \log (1+x_i)$. Then $(T_1,T_2,T_3)$ is a sufficient statistic for the exponential family $f_{\gamma, \alpha, \beta}$. That is,
\begin{align*}
f_{\gamma, \alpha, \beta}(x) = \exp\left( \gamma T_1(x)+ (\alpha-1)T_2(x)+(\beta-1)T_3(x)-\mc{A}(\gamma, \alpha, \beta)\right),
\end{align*}
where $\mc{A}(\gamma, \alpha, \beta) = \log \left( 2^{\gamma n(n-1)+ n (\alpha + \beta -1)} \prod_{j=0}^{n-1} \frac{\Gamma(1+\gamma + j \gamma)\Gamma (\alpha + j \gamma) \Gamma (\beta + j \gamma)}{\Gamma(1+\gamma) \Gamma (\alpha + \beta + \gamma (n+j -1))}\right)$.
Then, given data $x^{(1)},\ldots,x^{(N)}$ the likelihood is
\begin{align*}
\prod_m f_{\gamma, \alpha, \beta}(x^{(m)}) = \exp\left( \gamma T_1^{(N)}+ (\alpha-1)T_2^{(N)}+(\beta-1)T_3^{(N)}-N\mc{A}(\gamma, \alpha, \beta)\right),
\end{align*}
where $T_i^{(N)} = \sum_m T_i(x^{(m)})$. By standard asymptotic theory, under the null, $T^{(N)} = \frac{1}{N} (T_1^{(N)},T_2^{(N)},T_3^{(N)})$ is approximately normal with mean $\mu = \nabla \mc{A}(\gamma_0, \alpha_0, \beta_0)$ and covariance matrix $\Sigma = \nabla^2 \mc{A}(\gamma_0, \alpha_0, \beta_0)/N$. The test that rejects for large values of
\begin{align*}
T = (T^{(N)}-\mu)^T \Sigma^{-1} (T^{(N)}-\mu)
\end{align*}
is asymptotically maximin optimal \cite[Theorem 13.5.5]{lehmann2006testing}. Moreover, $\mu$ and $\Sigma$ can be computed using the recurrence relations and series expansions for digamma and trigamma functions \cite[pp 258-259 ]{abramowitz1964handbook}. For $51\times 51$ orthogonal matrices, $n= 25$,  and $(\gamma_0,\alpha_0,\beta_0) = (1,3/2,1/2)$ they are
\begin{align*}
\mu \approx (-329.70,  -14.73 , -19.92),
\end{align*}
\[
\Sigma \approx
  \begin{bmatrix}
     5.67712\times 10^{-2}& -1.40363\times 10^{-2} & -3.61067\times 10^{-4}\\
    -1.40363 \times 10^{-2}&  1.52093\times 10^{-2} & -3.46523\times 10^{-3}\\
   -3.61067 \times 10^{-4}& -3.46523 \times 10^{-3}&  3.96834 \times 10^{-2}
  \end{bmatrix}.
\]

We applied this test on two samples of size $N = 200$ and $1000$; data was generated using only one step of the pseudorandom sampler (\ref{Alg:RokhGen}) of section \ref{Sec:RokhGen}. The test statistic $T$ evaluated to $3.84$ and $1.34$, respectively. Under the null, $T$ is approximately $\chi^2_3$ distributed. The corresponding $p$-values are $0.72$ and $0.28$, respectively; there is no evidence for departure from uniformity.

To further explore the performance of $T$, it was applied to the benchmark examples.
Tables \ref{table:ExpFamRef} and \ref{table:ExpFamKac} show the $p$-values corresponding to $T$ applied to the benchmark examples, suggesting that $T$ detects the cut-off to some extent. In particular, it seems to be more powerful than both Gine's and Rayleigh's tests.

\begin{table}[h!]
\caption{$p$-values corresponding to product of random reflections}
\label{table:ExpFamRef}
\centering
\begin{tabular}{lcccccc}
\hline
$k=$ & \multicolumn{1}{c}{\begin{tabular}{@{}c@{}}100\end{tabular}} & \multicolumn{1}{c}{\begin{tabular}{@{}c@{}}125\end{tabular}} & \multicolumn{1}{c}{\begin{tabular}{@{}c@{}}130\end{tabular}}&\multicolumn{1}{c}{\begin{tabular}{@{}c@{}}135\end{tabular}} &\multicolumn{1}{c}{\begin{tabular}{@{}c@{}}140  \end{tabular}} &\multicolumn{1}{c}{\begin{tabular}{@{}c@{}}150\end{tabular}}  \\
\hline
 $N=200$ & 0 & 0.0002 &0.001& 0.055& 0.09& 0.20\\
 $N=1000$& 0 & 0& 0 &0.0001 &0.03&0.42\\
 \hline
\end{tabular}
\end{table}

\begin{table}[h!]
\caption{$p$-values corresponding to Kac's walk}
\label{table:ExpFamKac}
\centering
\begin{tabular}{lcccccc}
\hline
$k=$ & \multicolumn{1}{c}{\begin{tabular}{@{}c@{}}150\end{tabular}} & \multicolumn{1}{c}{\begin{tabular}{@{}c@{}}200\end{tabular}} & \multicolumn{1}{c}{\begin{tabular}{@{}c@{}}225\end{tabular}}&\multicolumn{1}{c}{\begin{tabular}{@{}c@{}}240\end{tabular}} &\multicolumn{1}{c}{\begin{tabular}{@{}c@{}}250  \end{tabular}} &\multicolumn{1}{c}{\begin{tabular}{@{}c@{}}300\end{tabular}}  \\
\hline
  $N=200$ & 0 & 0 & 0.001 & 0.004 &0.19& 0.40\\
 $N=1000$& 0 & 0& 0 & 0&0.00001 & 0.65\\
 \hline
\end{tabular}
\end{table}

\subsection{A family of consistent tests for $f_{1,\frac{3}{2},\frac{1}{2}}$} \label{section:EigenvalueTest}
Given data $x^{(1)},\ldots,x^{(N)}$ in $[-1,1]^n$, this section introduces a family of tests $T_z^{(N)}(x^{(1)},\ldots,x^{(N)})$, for a parameter $0< z <1$, that are invariant under the natural symmetries of $f_{1,\frac{3}{2},\frac{1}{2}}$ and are consistent against all alternatives. The asymptotic distributions under null and alternative are also available. The construction is given for a general hypothesis testing problem in the following section; It is then carried out for $f_{1,\frac{3}{2},\frac{1}{2}}$.
\subsubsection{Spectral tests on general spaces}\label{Sec:SpectralTestsGeneralSpaces}
Let $\mc{X}$ be a Polish space and $\mu$ a probability measure 
 on $\mc{X}$. Consider the standard non-parametric goodness-of-fit
testing problem: given independent and identically distributed observations
$x_1,\ldots,x_N \in \mc{X}$ from a probability measure $\nu$ on $\mc{X}$,
test if $\nu = \mu$. A general test based on spectral techniques can be constructed as follows. 

Let $\mc{L}^2(\mc{X},\mu)$ be the space of square $\mu$-integrable 
functions on $\mc{X}$. Assume that $\mc{L}^2(\mc{X},\mu)$ is separable with 
a countable orthonormal basis $\{f_i\mid i\ge 0\}$, with $f_0 = 1$. For example, it suffices to assume that $\mc{X}$ is compact. Theorem 13.8 in \citet{thomson2008elementary} gives a condition for $L^2$ to be separable in the general setting. Define the empirical measure of $\{x_i\}$ 
as $\nu_N = \frac{1}{N}\sum_{i=1}^N \delta_{x_i}$, and the Fourier coefficients of $\nu_N$ as 
\begin{align*}
\widehat{\nu}_N(i) &= \int_{\mc{X}}f_i(x)\nu_N(dx)\\
				 &= \frac{1}{N}\sum_{n=1}^N f_i(x_n).
\end{align*}
Then, under the null, $\widehat{\nu}_N(i) \rightarrow 0$ as $n$ grows to infinity, for $i>0$. This property characterizes $\mu$; that is, if $x_1,x_2,\ldots \in \mc{X}$ are i.i.d.\ draws from $\nu$ and $\widehat{\nu}_N(i) \rightarrow 0$ for all $i>0$ then, $\mu = \nu$. This property can be used to construct tests of fit for $\mu$. By the central limit theorem
\begin{align*}
\sqrt{N}\widehat{\nu}_N(i) \rightarrow \mc{N}(0,1).
\end{align*}
Thus, $N |\widehat{\nu}_N(i)|^2$ is asymptotically $\chi^2_1$ distributed. For a sequence of weights $\bm{c} = (c_1,c_2,\ldots)$, define
\begin{align*}
T_{\bm{c}} = N \sum_i c_i |\widehat{\nu}_N(i)|^2.
\end{align*}
Assuming that $T_{\bm{c}}$ converges to a finite value, a test that rejects for large values of $T_{\bm{c}}$ can be used for testing $H_0: \nu = \mu$. Many well-known classical tests can be constructed in this manner. The most important example is the celebrated Neyman's smooth test for uniformity on the unit interval. Neyman's test uses Legendre polynomials as the orthonormal basis. Under mild conditions and the assumption $c_i >0 $ the test based on $T_{\bm{c}}$ is consistent against all alternatives, and has various desired statistical properties. There is a vast literature on properties of tests of this form; we do not attempt to review the literature since it is considered classical nowadays.

There are two main challenges in using the above machinery in a general problem: 1) finding an orthonormal basis for $\mc{L}^2$, 2) computing $T_{\bm{c}}$. In his celebrated paper, \cite{gine1975invariant} gave a solution for the first challenge for the testing problem with $\mu$ being the invariant measure on a compact Riemannian manifold $M$; this is sketched below.

Let $\Delta$ be the Laplace-Beltrami operator 
 (Laplacian) of $M$ acting on the space of Schwartz functions
  by duality. Denote by $E_k$ the $k$-th invariant eigenspace
   of $\Delta$ with eigenvalue $\sigma_k$. Let $\{f_i^k\}_{i=0}^{\dim E_k}$
    be an orthonormal basis for $E_k$. Then, $\{f_i^k\mid   k \ge 0 ,\; 1 \le i\le \dim E_k\}$ is an orthonormal basis for $\mc{L}^2(M,\mu)$. Note that the hypothesis testing problem
    \begin{align*}
    H_0: \nu = \mu \quad \text{vs} \quad H_1: \nu \neq \mu
    \end{align*}
is invariant under natural symmetries of $M$. Therefore, by the Hunt-Stein theorem \cite[page 331]{lehmann2006testing} one only needs to consider invariant tests. \cite{gine1975invariant} suggested the test, called Sobolev test, based on
\begin{align*}
T_N^{\bm{\alpha}}(\bm{x}) = N \sum_{k=1}^\infty \alpha_k \sum_{f_i\in E_k} \left[\int_M f_i\, d\nu_N(\bm{x})\right]^2,
\end{align*}
for a sequence of weights $\bm{\alpha} = (\alpha_1,\alpha_2,\ldots)$ such that $\sup |\alpha_k \sigma_k^s|<\infty$ for some
      $s > \frac{1}{2}\dim M$. Note that the weights depend only on the eigenspaces; this ensures that the test is invariant. \cite{gine1975invariant} studied statistical properties of
 the Sobolev tests; he derived the null and alternative distribution and investigated
  local optimality properties following \cite{beran1968testing}.
  
  Although these tests have been successful in practice, usually substantial 
non-trivial work is required to carry out the details for any particular example of interest. \cite{gine1975invariant} carried out the program for the circle, sphere, and the projective
  plane, recovering many known examples in the literature and introducing new tests of uniformity.
Several authors have studied and derived Sobolev tests for different examples 
including circular and directional data, tests of symmetry, and unitary eigenvalues; see \cite{prentice1978invariant,wellner1979permutation,jupp1983sobolev,jupp1985sobolev, hermans1985new,baringhaus1991testing,sengupta2001optimal,coram2003new}.

Regarding the second challenge, note that
\begin{align*}
T_{\bm{c}} &=N \sum_k c_k |\widehat{\nu}_N(k)|^2  \\
		&= \frac{1}{N} \sum_{i,j=1}^N \sum_{k}c_k  f_k(x_i)f_k(x_j).
\end{align*}
Therefore, it suffices to have a way of computing
\begin{align}\label{Def:KernelGeneral}
K(x,y) = \sum_{k} c_k f_k(x)f_k(y).
\end{align}

 To compute $K(\cdot,\cdot)$, \cite{gine1975addition,gine1975invariant} suggested partial answers for the Sobolev tests, based on Zonal functions; there still remains the challenge to find a closed form for $T_N^{\bm{\alpha}}$, or to compute it effectively for a general Reimannian manifold $M$. The paper resolves this issue for compact classical groups and a class of weight sequences $\{ c_k\}$.

\subsubsection{Spectral tests for $f_{1,3/2,1/2}$}\label{Sec:EigenTest}
The following facts and notation will be used throughout this paper. For each partition $\lambda$, with at most $n$ parts, of an arbitrary non-negative integer, there exists an integer $d_\lambda$ and a map $\pi_\lambda$ from $SO(2n+1)$ to the set of all $d_\lambda \times d_\lambda$ matrices with the following properties. For $g,h \in SO(2n+1)$, one has $\pi(g\cdot h) = \pi(g) \pi(h)$; the set of all matrix coordinates $\{\pi_\lambda^{i,j}\mid \lambda , 1\le i,j \le d_\lambda\}$ is an orthonormal basis for $\mc{L}^2(SO(2n+1))$. Moreover, if $\chi_\lambda (g) = \Tr (\pi_\lambda (g))$, then $\{\chi_\lambda\}$ is an orthonormal basis for $\mc{L}^2(f_{1,3/2,1/2})$. These are standard facts from representation theory of Lie groups. A brief introduction is given in section 1 of the supplementary material \cite{sepehri2017supplement}. For a textbook treatment see \cite{bump2004lie,goodman2009symmetry}.

Given independent observations $g_1,\ldots,g_N \in SO(2n+1)$, define the Fourier coefficient corresponding to $\lambda$ as
\begin{align*}
\widehat{\chi}_N(\lambda)={1\over N} \sum_{i=1}^N \chi_\lambda(g_i).
\end{align*} 
For $0<z<1$ define the test statistics $T_z^{(N)}$ as 
\begin{align}\label{Def:EigenTest}
T_z^{(N)} &= N \sum_{\lambda\neq 0} z^{|\lambda|} \left| \widehat{\chi}_N(\lambda) \right|^2,
\end{align}
where $|\lambda| = \lambda_1 + \ldots + \lambda_n$ is the sum of the parts of the partition $\lambda$ and sum is over all partitions of all positive integers with at most $n$ parts.

To use $T_z^{(N)}$ in practice, a closed form expression for the kernel given in (\ref{Def:KernelGeneral}),
\begin{align*}
K_z(g_i,g_j) = \sum_{\lambda\neq 0} z^{|\lambda|}  \chi_\lambda(g_i)\chi_\lambda(g_j),
\end{align*}
would yield a closed form expression for $T_z^{(N)}$.

 \citet{coram2003new} used the closed form expression for $K_z(g,h)$, given by the Cauchy identity for the Schur functions, to build a test for the eigenvalue distribution induced by the Haar measure on the unitary group. The test statistic $T_z^{(N)}$ is an analogue for the orthogonal group of their test. 

In the case of the orthogonal group there was no closed form for $K_z$ available in the literature. Motivated by the testing problem under study in the present paper, the author derived Cauchy identities for all of the compact classical groups.
\begin{prop}[Cauchy identity for $SO(2n+1)$, Theorem 2.1 in \cite{sepehri2017supplement}] \label{thm:CauchySOodd}
Let $g,h \in SO(2n+1)$ have eigenvalues equal to $(1, e^{\pm i \theta_1},\ldots,e^{\pm i \theta_n})$ and $(1, e^{\pm i \phi_1},\ldots,e^{\pm i \phi_n})$, respectively. Then, 
\begin{align}\label{Eq:CauchyIdentity}
K_z(g,h) &= \frac{(1-z)^n \det\left[ \frac{(1+z)^2+2z(\cos\theta_k+\cos\phi_l)}
{(1+z^2)^2 - 4(z+z^3)\cos \theta_k \cos \phi_l+2z^2 (\cos 2\theta_k +\cos 2\phi_l)}
\right]_{k,l}}{(4z)^{\binom{n}{2}}\prod_{i<j} \left(\cos\theta_i-\cos\theta_j\right) \prod_{i<j} 
\left(\cos\phi_i-\cos\phi_j\right)}-1.
\end{align}
\end{prop}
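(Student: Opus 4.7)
The plan is to start from the Weyl character formula for $SO(2n+1)$ and sum the resulting ratio of alternants in closed form, following the strategy used by \citet{coram2003new} in the unitary case via the classical Cauchy identity for Schur functions. With $c_j=\cos\theta_j$, $d_j=\cos\phi_j$, and $V_m$ the degree-$m$ polynomial defined by $V_m(\cos\theta)=\sin((m+\tfrac12)\theta)/\sin(\theta/2)$ (a Chebyshev polynomial of the ``fourth'' kind), the Weyl formula for type $B_n$ reads
\begin{align*}
\chi_\lambda(g)=\frac{\det[V_{m_j}(c_i)]_{i,j}}{\det[V_{n-j}(c_i)]_{i,j}},\qquad m_j=\lambda_j+n-j,
\end{align*}
after pulling the common factor $\prod_i\sin(\theta_i/2)$ out of numerator and denominator. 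Because $V_m$ has leading coefficient $2^m$, row-reduction to a Vandermonde gives $\det[V_{n-j}(c_i)]_{i,j}=2^{\binom{n}{2}}\prod_{i<j}(c_i-c_j)$ up to a sign, which, together with its companion for $h$, already accounts for $(4z)^{\binom{n}{2}}\prod(\cos\theta_i-\cos\theta_j)\prod(\cos\phi_i-\cos\phi_j)$ in the denominator of the claimed identity.

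I would next collapse the double sum. Since $|\lambda|=\sum_j m_j-\binom{n}{2}$, the sum $\sum_\lambda z^{|\lambda|}\chi_\lambda(g)\chi_\lambda(h)$ reduces, up to the prefactor $z^{-\binom{n}{2}}$, to summing $z^{\sum_j m_j}\det[V_{m_j}(c_i)]\det[V_{m_j}(d_i)]$ over strictly decreasing tuples $m_1>\cdots>m_n\ge 0$. The summand is symmetric in the $m_j$ and vanishes whenever two of them coincide, so one may extend the sum to all of $\mb{Z}_{\ge 0}^n$, expand the two determinants, and factor the resulting product of geometric-type series; the standard Cauchy--Binet bookkeeping yields
\begin{align*}
\sum_{m_1>\cdots>m_n\ge 0}z^{\sum_j m_j}\det[V_{m_j}(c_i)]\det[V_{m_j}(d_i)]=\det\bigl[\widetilde K(c_k,d_l)\bigr]_{k,l},
\end{align*}
where $\widetilde K(c,d)=\sum_{m\ge 0}z^m V_m(c)V_m(d)$ is the reproducing kernel of the weighted sum.

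It remains to evaluate $\widetilde K$ in closed form. Using the trigonometric form of $V_m$ together with the product-to-sum identity, the sum splits into two instances of $\sum_{m\ge 0}z^m\cos((m+\tfrac12)\alpha)$, which is the real part of a geometric series and equals $(1-z)\cos(\alpha/2)/(1-2z\cos\alpha+z^2)$. Placing the two contributions (with $\alpha=\theta\pm\phi$) over the common denominator $(1-2z\cos(\theta-\phi)+z^2)(1-2z\cos(\theta+\phi)+z^2)$ produces, after expansion, exactly $(1+z^2)^2-4(z+z^3)\cos\theta\cos\phi+2z^2(\cos 2\theta+\cos 2\phi)$. The numerator, simplified via the half-angle identities $2\cos^2(\theta/2)=1+\cos\theta$ and $\sin\theta\sin\phi=4\sin(\theta/2)\cos(\theta/2)\sin(\phi/2)\cos(\phi/2)$, collapses to $(1-z)[(1+z)^2+2z(\cos\theta+\cos\phi)]$, and the $\sin(\theta/2)\sin(\phi/2)$ prefactor coming from $V_m$ cancels cleanly. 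Factoring $(1-z)$ out of each row of $\det[\widetilde K(c_k,d_l)]$ produces the $(1-z)^n$ in front, and subtracting the $\lambda=0$ contribution $\chi_0(g)\chi_0(h)=1$ accounts for the trailing $-1$. The main obstacle is precisely this final simplification: verifying that the two cross terms combine so that the half-angle prefactors cancel and the numerator collapses to the quoted polynomial requires careful use of the half-angle identities, and is what the type-$B_n$ calculation adds beyond the straightforward Schur-function argument used in the unitary case.
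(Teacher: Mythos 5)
Your proposal is correct and follows what is essentially the canonical route for this identity, which is also the route the paper takes in the cited supplement: write $\chi_\lambda$ for $SO(2n+1)$ via the Weyl character formula as a ratio of $n\times n$ alternants in the ``fourth-kind'' Chebyshev polynomials $V_m(\cos\theta)=\sin((m+\tfrac12)\theta)/\sin(\theta/2)$, multiply the two alternant ratios for $g$ and $h$, absorb $z^{|\lambda|}=z^{\sum m_j - \binom{n}{2}}$, collapse the sum over strictly decreasing $(m_1,\dots,m_n)$ via Cauchy--Binet into $\det[\widetilde K(c_k,d_l)]$ with $\widetilde K(c,d)=\sum_{m\ge 0} z^m V_m(c)V_m(d)$, and then evaluate $\widetilde K$ in closed form from the geometric series for $\sum_m z^m\cos((m+\tfrac12)\alpha)=(1-z)\cos(\alpha/2)/(1-2z\cos\alpha+z^2)$. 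Your claimed simplification of the numerator is correct: with $A=\cos\frac{\theta-\phi}{2}$ and $B=\cos\frac{\theta+\phi}{2}$, the cross term $A(1-2z\cos(\theta+\phi)+z^2)-B(1-2z\cos(\theta-\phi)+z^2)$ factors as $(A-B)\bigl[(1+z^2)+2z(2AB+1)\bigr]$ by writing $\cos(\theta\mp\phi)=2A^2-1,\,2B^2-1$, and then $A-B=2\sin\frac{\theta}{2}\sin\frac{\phi}{2}$ and $2AB=\cos\theta+\cos\phi$ give exactly the half-angle cancellation and the numerator polynomial $(1+z)^2+2z(\cos\theta+\cos\phi)$; the common denominator expands to $(1+z^2)^2-4(z+z^3)\cos\theta\cos\phi+2z^2(\cos 2\theta+\cos 2\phi)$ via $\cos\alpha+\cos\beta=2\cos\theta\cos\phi$ and $\cos\alpha\cos\beta=\tfrac12(\cos 2\theta+\cos 2\phi)$ at $\alpha=\theta-\phi,\,\beta=\theta+\phi$. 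Two minor clean-ups: the Weyl denominator $\det[V_{n-j}(c_i)]$ equals $2^{\binom{n}{2}}\prod_{i<j}(c_i-c_j)$ exactly (no sign ambiguity), and one should note that $|V_m|\le 2m+1$ on $[-1,1]$, so the interchange of summation in the Cauchy--Binet step is justified by absolute convergence for $|z|<1$.
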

Despite the complicated appearance of the formula (\ref{Eq:CauchyIdentity}), it is relatively easy to compute if the dimension is not too large, offering a way to compute $T_z^{(N)}$.

The test based on $T_z^{(N)}$ was applied to the benchmark examples and the new sampler. It is indeed more powerful than all tests considered in the previous sections on both examples. Figure \ref{fig:HisEigenTestRef} illustrates the histogram of the values of $T_z^{(N)}$ under Setup \ref{setup}. The $5\%$-level test based on $T_{1/2}^{(200)}$ has power equal to $0.64$ against the product of 140 random reflections; the power drops to 0.30 after 150 steps. Similarly, it has power equal to 0.93 against the Kac's walk after 250 steps, which drops to 0.25 after 300 steps. 
\begin{figure}[h!]
  \centering
  \begin{minipage}[b]{0.24\textwidth}
    \includegraphics[width=\textwidth]{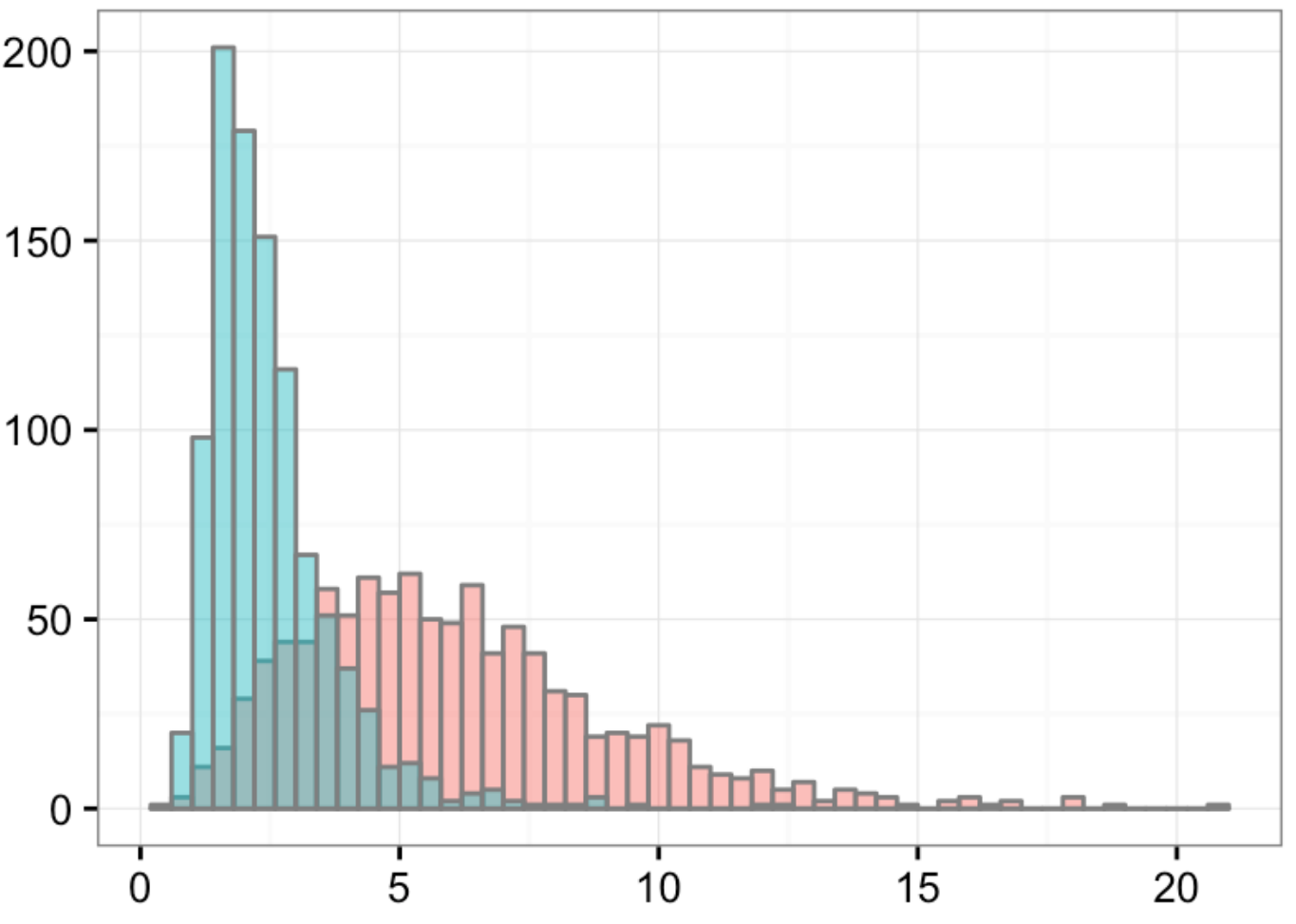}
  \end{minipage}
  \hfill
  \begin{minipage}[b]{0.24\textwidth}
    \includegraphics[width=\textwidth]{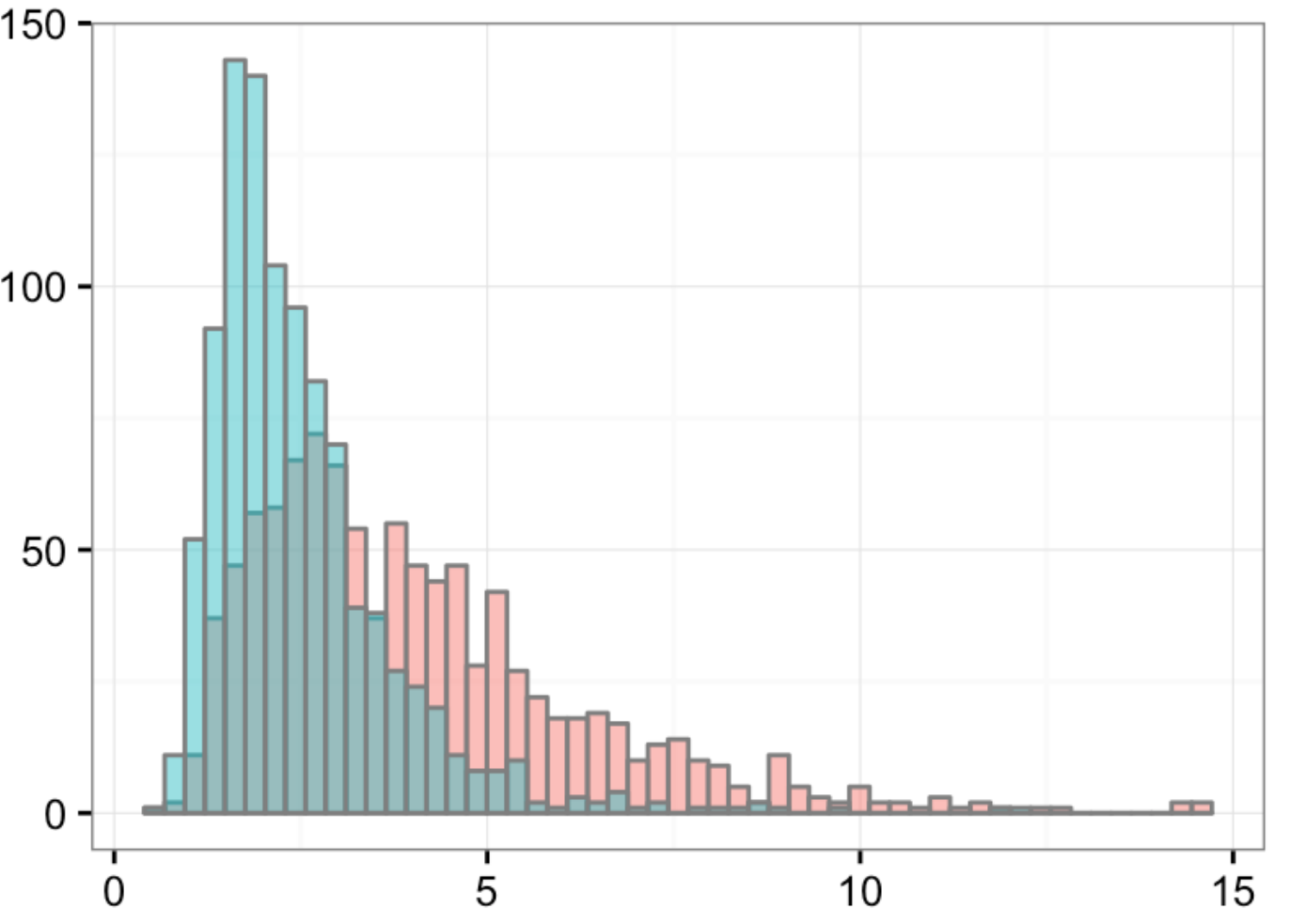}
  \end{minipage}\hfill
  \begin{minipage}[b]{0.24\textwidth}
    \includegraphics[width=\textwidth]{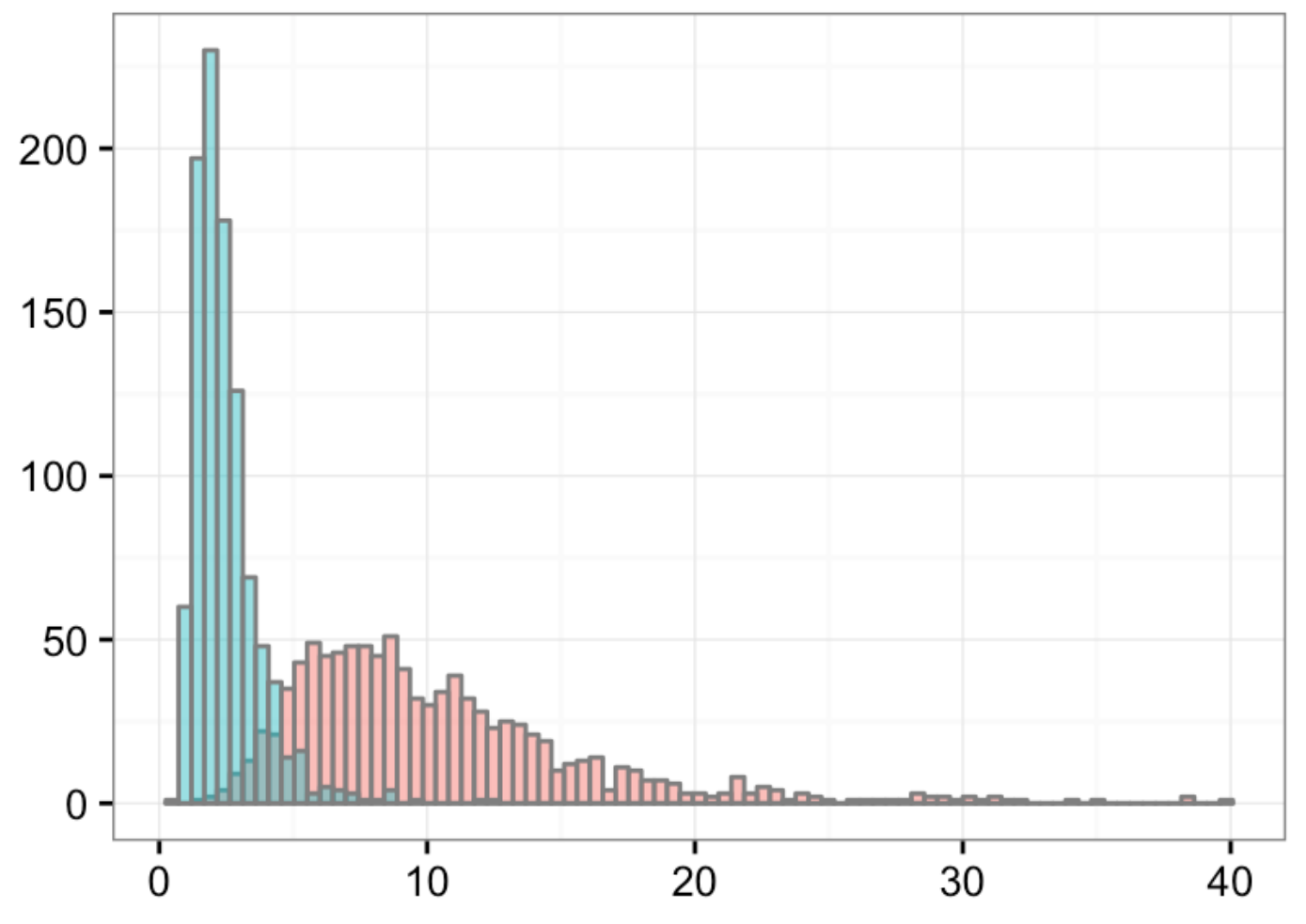}
  \end{minipage}
  \hfill
  \begin{minipage}[b]{0.24\textwidth}
    \includegraphics[width=\textwidth]{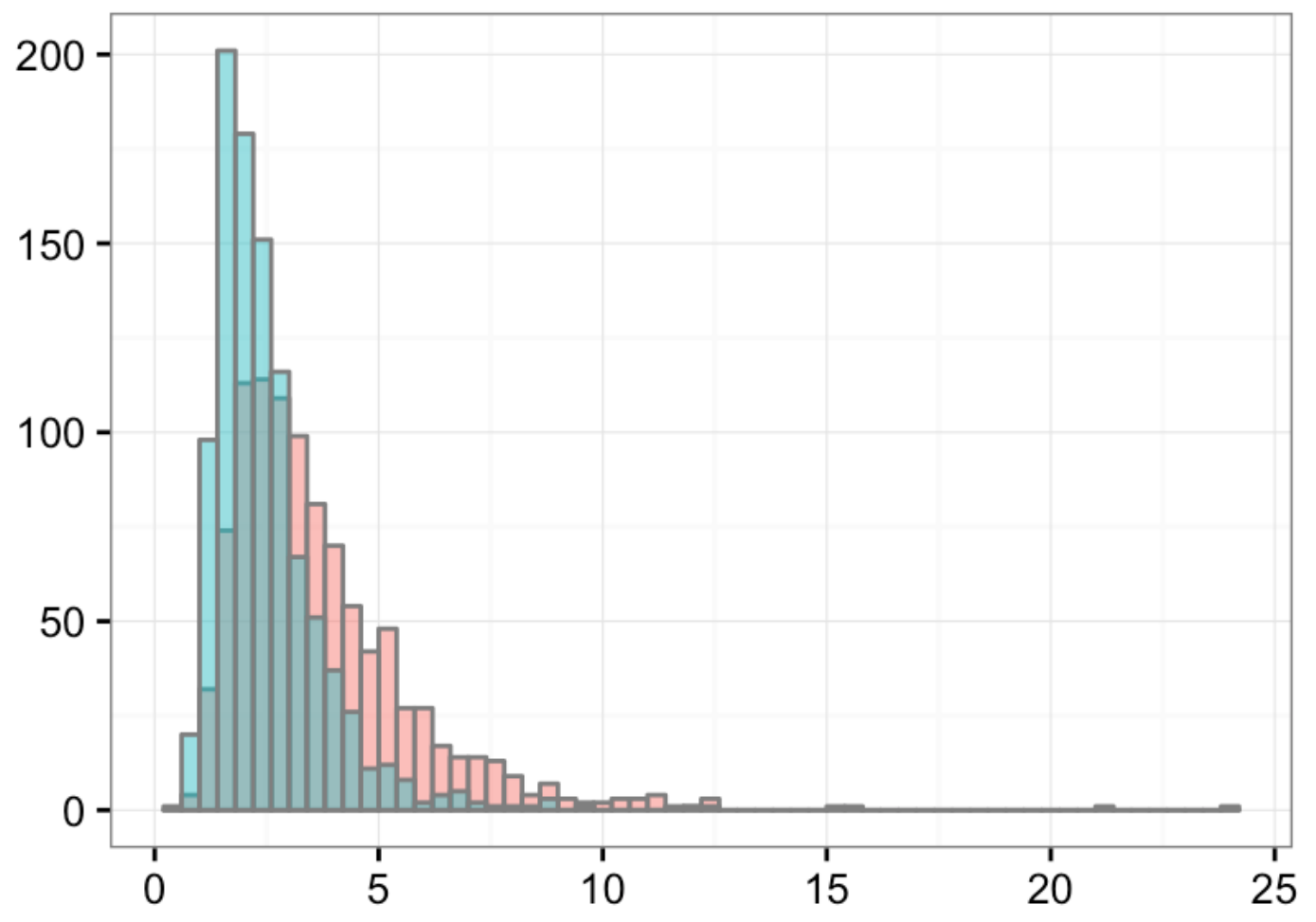}
  \end{minipage}
   \caption{Histogram of $T_z^{(N)}$ under Setup \ref{setup} and $ z= 0.5,$. The alternatives are (left to right) the product of 140 random reflections, 150 random reflections, the Kac's random walk after 250 steps, and 300 steps.}
  \label{fig:HisEigenTestRef}
\end{figure}

\begin{remark} The test statistic $T_z^{(N)}$ has, by construction, a decomposition to approximately independent parts. When the test rejects the null hypothesis, it would be illuminating regarding the nature of the departure from uniformity to see which of the components is larger than its typical values. This was investigated using lower order terms of the form
\begin{align*}
C_k^{(N)} = \frac{1}{N} \sum_{i=1}^N \Tr (g_i^k),
\end{align*}
 under Setup \ref{setup} and result is shown in Figures \ref{fig:ADValuesRefComp} and \ref{fig:ADValuesKacComp} for $k=1,\ldots,5$.
 As it can be seen, the component corresponding to $\Tr (g)$ is the most significant against the product of random reflections. For the Kac's walk, $\Tr (g^2)$ is the most significant and captures the cutoff more clearly compared to $T_z^{(N)}$. The nature of deviation differs for these examples.
 \begin{figure}[h!]
  \centering
  \begin{minipage}[b]{0.7\textwidth}
    \includegraphics[width=\textwidth]{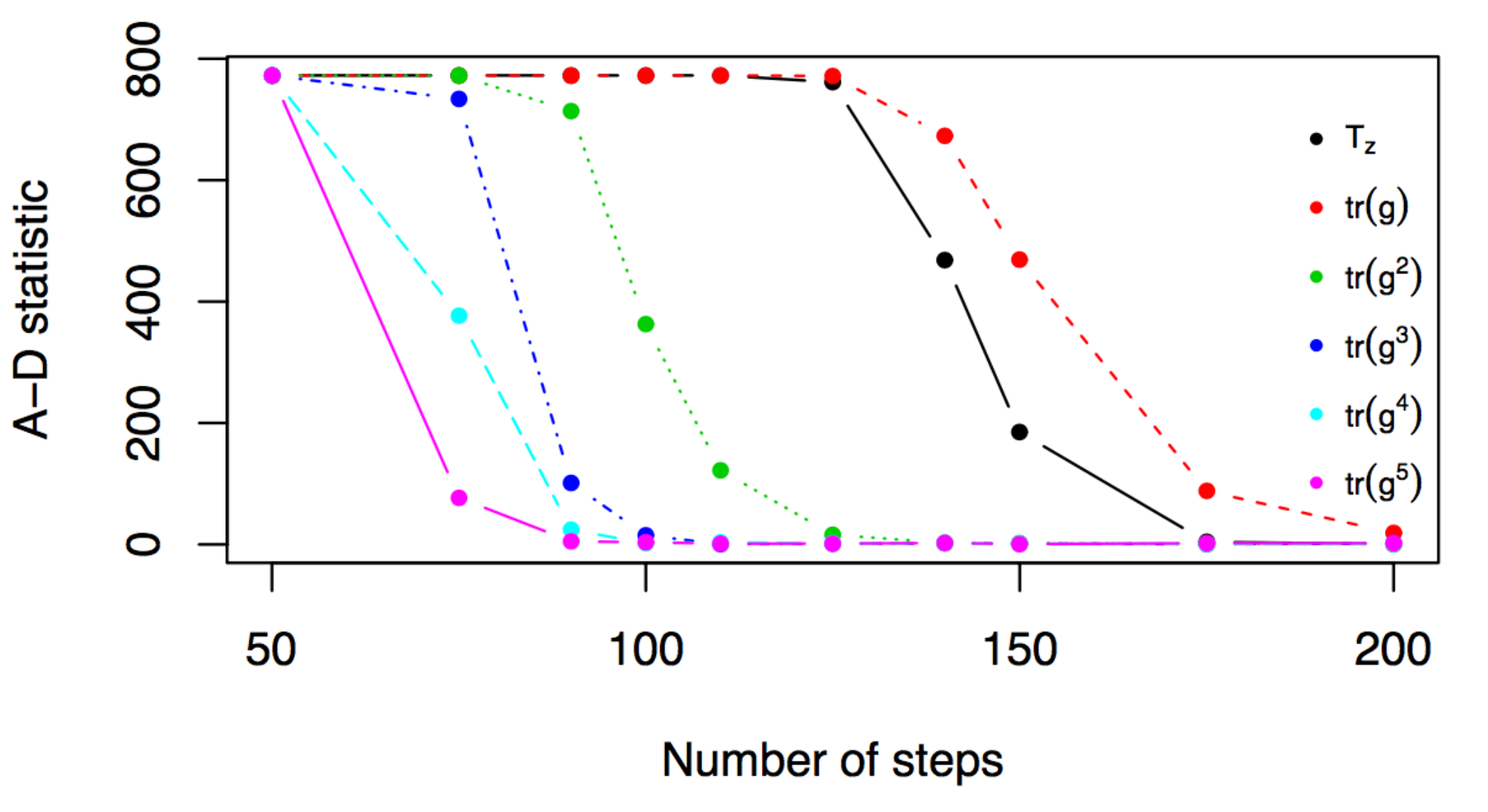}
  \end{minipage}
   \caption{Values of the Anderson-Darling statistic for comparison of the uniform sample and the product of random reflections for different tests.}
  \label{fig:ADValuesRefComp}
\end{figure} 
\begin{figure}[h!]
  \centering
  \begin{minipage}[b]{0.7\textwidth}
    \includegraphics[width=\textwidth]{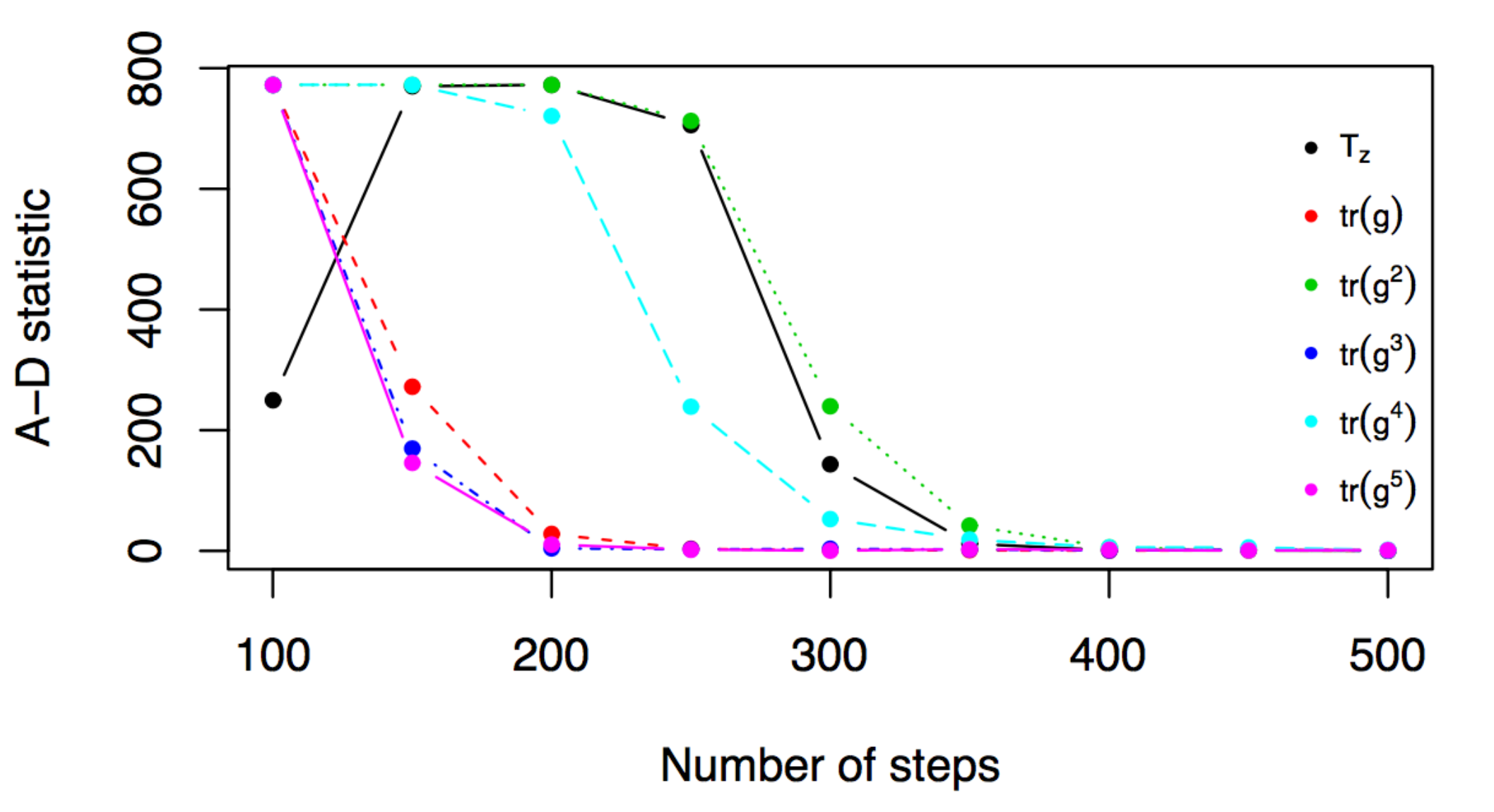}
  \end{minipage}
   \caption{Values of the Anderson-Darling statistic for comparison of the uniform sample and the Kac's walk for different tests.}
  \label{fig:ADValuesKacComp}
\end{figure} 

\end{remark}

The test based on $T_{1/2}^{(200)}$ applied to the new sampler, after a single iteration, provides no evidence for departure from the null; the Anderson-Darling p-value based on 1000 values of $T_{1/2}^{(200)}$ is equal to 0.35.

\subsubsection{Asymptotic distribution under the null hypothesis}
The representation (\ref{Def:EigenTest}) allows for derivation of the distribution under the null hypothesis of uniformity. This is the content of the following proposition.
\begin{prop}\label{NullAsymptoticDist}
Assume $g_1,\ldots, g_N$ are independent draws from the uniform distribution on $SO(2n+1)$, and $T_z^{(N)}$ is defined as in (\ref{Def:EigenTest}). Then, for any $z\in(0,1)$ one has
\begin{align*}
T_z^{(N)} \rightarrow T_z = \sum_{k=1}^\infty z^k \chi^2_{p(n,k)} \;\;\;(\text{as  }N\rightarrow \infty).
\end{align*}
On the right hand side, p(n,k) is the number of partitions of $k$ with at most $n$ parts and random variables $\chi^2_{p(n,k)}$ are chi-square with $p(n,k)$ degrees of freedom which are mutually independent.
\end{prop}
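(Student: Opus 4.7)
The approach is a standard truncation plus multivariate central limit theorem. I would argue in three steps: (i) establish joint convergence of the truncation to partitions of weight at most $K$; (ii) control the tail uniformly in $N$; (iii) combine via a standard limit-exchange argument.

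For (i), each character $\chi_\lambda$ is bounded on the compact group $SO(2n+1)$ and, since every irreducible representation of $SO(2n+1)$ is of real type, $\chi_\lambda$ is real-valued. By the orthonormality of $\{\chi_\lambda\}$ in $\mc{L}^2(f_{1,3/2,1/2})$ recalled in Section~\ref{Sec:EigenTest}, for $\lambda\neq 0$ we have $\mb{E}[\chi_\lambda(g)]=0$ and $\mb{E}[\chi_\lambda(g)\chi_\mu(g)]=\delta_{\lambda\mu}$ under the Haar measure. Applying the multivariate CLT to the i.i.d.\ bounded vectors $(\chi_\lambda(g_i))_{1\le|\lambda|\le K}$ yields joint convergence of $(\sqrt{N}\,\widehat\chi_N(\lambda))_{1\le|\lambda|\le K}$ to a vector $(Z_\lambda)$ of independent standard normals. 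Grouping by $|\lambda|=k$ and using that a sum of $p(n,k)$ independent squared standard normals has a $\chi^2_{p(n,k)}$ law, the continuous mapping theorem gives
\[
T_z^{(N,K)}\;:=\;N\!\!\sum_{1\le|\lambda|\le K}\! z^{|\lambda|}\,|\widehat\chi_N(\lambda)|^2 \;\xrightarrow{d}\; S_K\;:=\;\sum_{k=1}^{K} z^k\,\chi^2_{p(n,k)},
\]
with the chi-squares mutually independent.

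For (ii), the orthonormality relation gives $\mb{E}[N|\widehat\chi_N(\lambda)|^2]=1$ for every $N$ and every $\lambda\neq 0$, hence
\[
\mb{E}\bigl[T_z^{(N)}-T_z^{(N,K)}\bigr]\;=\;\sum_{k>K}p(n,k)\,z^k,
\]
uniformly in $N$, and the same expression bounds $\mb{E}[S-S_K]$ in the limit. Since $p(n,k)\le p(k)$ and Euler's identity $\sum_{k\ge 0}p(k)z^k=\prod_{j\ge 1}(1-z^j)^{-1}$ is finite for $z\in(0,1)$, this tail vanishes as $K\to\infty$. Markov's inequality then delivers, for any $\varepsilon>0$,
\[
\mb{P}\bigl(|T_z^{(N)}-T_z^{(N,K)}|>\varepsilon\bigr)\;\le\;\varepsilon^{-1}\sum_{k>K}p(n,k)\,z^k,
\]
with the analogous bound for $|S-S_K|$.

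Step (iii) is the standard interchange-of-limits lemma for triangular arrays: given $\varepsilon>0$, first pick $K$ so that both tail probabilities are $<\varepsilon/3$, then pick $N$ large enough that the L\'evy distance between $T_z^{(N,K)}$ and $S_K$ is $<\varepsilon/3$. This yields $T_z^{(N)}\xrightarrow{d} S:=\sum_{k\ge 1}z^k\chi^2_{p(n,k)}$. The main technical point is the uniform-in-$N$ tail bound in step (ii); everything else is routine once that is secured. The boundedness of each $\chi_\lambda$ on the compact group and the elementary generating-function bound on $p(n,k)$ are what make both the CLT and the second-moment estimate completely standard.
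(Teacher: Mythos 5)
Your proof is correct, but it takes a genuinely different route from the paper. The paper first upgrades the coordinatewise CLT (from the orthonormality of the characters) to weak convergence of $\sqrt{N}\,\widehat{\chi}_N$ as a random element of the weighted sequence space $\mb{D}=\{\bm{x}:\sum_{\lambda\neq 0}z^{|\lambda|}x_\lambda^2<\infty\}$, citing van der Vaart and Wellner for determination by finite-dimensional marginals, and then applies a second-order delta method (R\"omisch) to the squared-norm functional $\phi(\bm{x})=\langle\bm{x},\bm{x}\rangle_z$ to conclude $N\phi(\widehat{\chi}_N)\to\phi(Z)=\sum_\lambda z^{|\lambda|}Z_\lambda^2$. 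You instead truncate at $|\lambda|\le K$, get the finite-dimensional limit $S_K$ from the multivariate CLT and continuous mapping, and control the remainder uniformly in $N$ via the exact identity $\mb{E}\bigl[N|\widehat{\chi}_N(\lambda)|^2\bigr]=1$ together with the convergent generating function for $p(n,k)$, finishing with the standard approximation lemma for interchanging the limits in $N$ and $K$ (Billingsley's Theorem 3.2-type argument). Your route is more elementary and self-contained: it makes explicit the uniform tail bound that is the real content behind convergence of the infinite weighted sum, and it avoids the functional-analytic machinery of Hadamard directional differentiability (indeed, it supplies the tightness-in-$\mb{D}$ ingredient that the paper's appeal to finite-dimensional marginals glosses over). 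What the paper's approach buys is generality and brevity: once convergence in $\mb{D}$ is in hand, the same delta-method template dispatches other smooth functionals of $\widehat{\chi}_N$ (and the analogous statistic $U_{z,q}^{(N)}$) without redoing a truncation argument each time. One cosmetic remark: you invoke real-type of the representations of $SO(2n+1)$ to get real-valued characters; this is true but not needed, since with $|\widehat{\chi}_N(\lambda)|^2$ the same second-moment computation and CLT go through using the Hermitian orthogonality relations.
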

\begin{proof}
The CLT along with the orthogonality relations between the irreducible characters assert that $\sqrt{N}\widehat{\chi}_N(\lambda)$ converges to standard Gaussian variables for ${\lambda\neq0}$, and the limiting variables are mutually independent. Let $\mb{D} = \{\bm{x}= (x_\lambda)_{\lambda \neq 0}\mid \sum_{\lambda \neq 0} z^{|\lambda|} x_\lambda^2 < \infty \}$. $\mb{D}$ is a Hilbert space equipped with the inner-product
\begin{align*}
\langle \bm{x}, \bm{y} \rangle_{z} = \sum_{\lambda \neq 0} z^{|\lambda|} x_\lambda y_\lambda.
\end{align*}
Define the map $\phi : \mb{D} \mapsto \mb{R}$ as $\phi (\bm{x}) = \langle \bm{x},\bm{ x} \rangle_{z} =  \sum_{i \in \mc{I}} z^{|\lambda|}x_\lambda^2$; it is second order Hadamard directionally differentiable with derivative $\phi_{\bm{x}}^\prime (h) = 2\langle x,h\rangle_{z}$ and the second derivative $\phi^{\prime\prime}_{\bm{x}} (h) = 2 \phi (h)$. Let $\widehat{\chi}_N = (\widehat{\chi}_N(\lambda))_\lambda$. Then,
\begin{align*}
T_{z}^{(N)} = N \phi (\widehat{\chi}_N).
\end{align*}
\citet[Theorem 1.4.8]{van1996weak} asserts that weak convergence in a countable product space, to separable limiting variables, is determined by weak convergence of all finite-dimensional marginals. That means that $\sqrt{N} \widehat{\chi}_N$ converges weakly to a random element of $\mb{D}$, denoted by $Z = (Z_\lambda)_\lambda$ where $Z_\lambda$ are independent normal variables. Since, $\sqrt{N}(\widehat{\chi}_N-0) \rightarrow Z$, the second order Delta method \citep{romisch2005delta} yields
\begin{equation*}
N  \phi(\widehat{\chi}_N) \rightarrow \phi (Z) = \sum_{\lambda \neq 0} z^{|\lambda|} Z_\lambda^2.
\end{equation*}
 Collecting powers of $z$ proves the proposition.  
\end{proof}

\begin{remark}As a consequence of Proposition \ref{NullAsymptoticDist}, asymptotic expectation and variance of $T_z^{(N)}$ are given as 
\begin{align*}
 \mb{E}(T_z) = \sum_{k=1}^\infty z^k p(n,k)\quad \text{and} \quad
 \text{Var}(T_z) = 2\sum_{k=1}^\infty z^{2k} p(n,k).
\end{align*}
The right hand sides can be simplified using the identity \cite[Theorems 1.1 \& 1.4]{andrews1998theory}
\begin{align*}
\sum_{k=0}^\infty z^k p(n,k) = \prod_{i=1}^n \frac{1}{1-z^i}.
\end{align*}
We get
\begin{align*}
\mb{E}(T_z) = \prod_{i=1}^n \frac{1}{1-z^i}-1\quad \text{and} \quad
 \text{Var}(T_z) = 2\left(\prod_{i=1}^n \frac{1}{1-z^{2i}} -1\right).
\end{align*}
\end{remark}
Table \ref{table:AsymptoticMeanEigenTest} shows the asymptotic mean and variance for $51 \times 51$ rotations, i.e.\ $n=25$, and different values of $z$. Finite sample expectation and variance under Setup \ref{setup} and $z=0.5$ are $2.48$ and $1.26$ respectively. Empirical quantiles are given in Table \ref{table:EmpiricalQuantilesEigenTest}.
\begin{table}[h!]
\caption{Mean and variance of $T_z$ for $n = 25$.}
\label{table:AsymptoticMeanEigenTest}
\centering
\begin{tabular}{lcccc}
\hline
$z=$ & \multicolumn{1}{c}{\begin{tabular}{@{}c@{}}0.5\end{tabular}} & \multicolumn{1}{c}{\begin{tabular}{@{}c@{}}0.8\end{tabular}} & \multicolumn{1}{c}{\begin{tabular}{@{}c@{}}0.9\end{tabular}}&\multicolumn{1}{c}{\begin{tabular}{@{}c@{}}0.99\end{tabular}}\\
\hline
 mean & 2.46 & 291.45 &402914.7& $2.844628 \times 10^{25}$ \\
 variance & 0.9047073 & 18.86372 & 870.2173 & $8.097291\times 10^{18}$ \\
 \hline
\end{tabular}
\end{table}
\begin{table}[h!]
\caption{$P\left(T_z^{(N)} \le w_p\right) = p $ based on 1000 Monte Carlo runs with $N=200, n= 25, z= 0.5$.}
\label{table:EmpiricalQuantilesEigenTest}
\centering
\begin{tabular}{lccccccccc}
\hline
$p$ & \multicolumn{1}{c}{\begin{tabular}{@{}c@{}}0.01\end{tabular}} &\multicolumn{1}{c}{\begin{tabular}{@{}c@{}}0.05\end{tabular}} & \multicolumn{1}{c}{\begin{tabular}{@{}c@{}}0.10\end{tabular}} & \multicolumn{1}{c}{\begin{tabular}{@{}c@{}}0.25\end{tabular}}&\multicolumn{1}{c}{\begin{tabular}{@{}c@{}}0.50\end{tabular}}&\multicolumn{1}{c}{\begin{tabular}{@{}c@{}}0.75\end{tabular}}&\multicolumn{1}{c}{\begin{tabular}{@{}c@{}}0.90\end{tabular}}&\multicolumn{1}{c}{\begin{tabular}{@{}c@{}}0.95\end{tabular}}&\multicolumn{1}{c}{\begin{tabular}{@{}c@{}}0.99\end{tabular}} \\
\hline
 $w_p$ & 0.93& 1.16 & 1.30 &1.66& 2.20& 2.94 & 3.94&4.65 & 7.38\\
 \hline
\end{tabular}
\end{table}

\subsubsection{Distribution under fixed alternative hypotheses}
The alternative distribution is given below.
\begin{prop}Let $F$ be a probability measure on $[0,\pi]^n$ which is different from $f_{1,\frac{3}{2},\frac{1}{2}}$. Let $\theta^{(1)},\ldots,\theta^{(N)}$ be independent draws from $F$. Then, $T_z^{(N)}$ is asymptotically normal. In fact,
\begin{align*}
\sqrt{N}(T_z^{(N)}/N - \mu) \rightarrow \mc{N}(0,\sigma^2) \quad \text{ as } N\rightarrow \infty,
\end{align*}
with $\mu = \int r^2(\theta) F(d\theta) - 1$ and $\sigma^2 = 4\left[\int\left(\int r(\theta)g(\theta,\phi)f_{1,\frac{3}{2},\frac{1}{2}}(d\theta)\right)^2F(d\phi)-\mu^2 \right] $, where $g$ and $r$ are defined as
$r(\theta)=\int g(\theta,\phi)F(d\phi)$ and
\begin{align*}
g(\theta,\phi) = \frac{(1-\sqrt{z})^n \det\left[ \frac{(1+\sqrt{z})^2+2\sqrt{z}(\cos\theta_k+\cos\phi_l)}
{(1+z)^2 - 4\sqrt{z}(1+z)\cos \theta_k \cos \phi_l+2z (\cos 2\theta_k +\cos 2\phi_l)}
\right]_{k,l}}{(16z)^{\binom{n}{2}/2}\prod_{i<j} \left(\cos\theta_i-\cos\theta_j\right) \prod_{i<j} 
\left(\cos\phi_i-\cos\phi_j\right)}.
\end{align*}
\end{prop}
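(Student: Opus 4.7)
The plan is to recognize $T_z^{(N)}/N$ as a V-statistic of degree two with symmetric kernel $K_z$. Expanding $|\widehat{\chi}_N(\lambda)|^2$ and interchanging the sums yields
\[
\frac{T_z^{(N)}}{N} \;=\; \frac{1}{N^2}\sum_{i,j=1}^N K_z(g_i,g_j),
\]
and Proposition~\ref{thm:CauchySOodd} supplies a closed form for $K_z$ on $SO(2n+1)\times SO(2n+1)$. With this in hand I would invoke the classical central limit theorem for non-degenerate V-statistics of degree two: writing $\theta=\mathbb{E}_F K_z(X,Y)$ and $h_1(x)=\mathbb{E}_F K_z(x,Y)$, Hoeffding's projection gives
\[
\sqrt{N}\bigl(T_z^{(N)}/N-\theta\bigr) \;\Rightarrow\; \mathcal{N}\bigl(0,\,4\,\mathrm{Var}_F(h_1(X))\bigr),
\]
provided $h_1$ is not almost surely constant and $K_z$ has a finite second moment under $F\otimes F$. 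Both conditions follow from the explicit formula: for $z\in(0,1)$ the denominator in the Cauchy identity is bounded away from zero on the compact group, so $K_z$ is bounded and hence square-integrable; and the hypothesis $F\neq f_{1,3/2,1/2}$ forces at least one Fourier coefficient $\int\chi_\lambda\,dF$ to be nonzero, which in turn makes $h_1$ non-constant.

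Matching the limiting mean and variance with the formulas in the statement is purely algebraic and is mediated by the ``square-root'' kernel
\[
g(\theta,\phi) \;=\; \sum_{\lambda\neq 0}(\sqrt{z})^{|\lambda|}\chi_\lambda(\theta)\chi_\lambda(\phi),
\]
whose closed form is precisely the expression displayed in the proposition (substitute $\sqrt{z}$ for $z$ in Proposition~\ref{thm:CauchySOodd}). Orthonormality of $\{\chi_\lambda\}$ in $\mathcal{L}^2(f_{1,3/2,1/2})$ immediately gives the reproducing identity $K_z(x,y)=\int g(x,\tau)g(\tau,y)\,f_{1,3/2,1/2}(d\tau)$. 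Plugging this into the definitions of $\theta$ and $h_1$, with $r(\tau)=\int g(\tau,\phi)F(d\phi)$, yields $h_1(x)=\int g(x,\tau)\,r(\tau)\,f_{1,3/2,1/2}(d\tau)$ and $\theta=\int r(\tau)^2\,f_{1,3/2,1/2}(d\tau)$. Computing $4\,\mathrm{Var}_F(h_1(X))$ from this representation and exploiting the symmetry $g(\theta,\phi)=g(\phi,\theta)$ then reproduces the formula for $\sigma^2$ in the statement.

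The main obstacle is not any deep analytic issue but the bookkeeping that identifies the V-statistic projection $h_1$ with the integral-transform expression appearing in the proposition; once the square-root decomposition via the $f_{1,3/2,1/2}$-convolution is established, the rest reduces to a direct application of the V-statistic CLT together with the Cauchy identity of Proposition~\ref{thm:CauchySOodd}. As a sanity check, the null case $F=f_{1,3/2,1/2}$ kills every $r(\tau)$ by character orthogonality, so the first-order projection degenerates; this is precisely the regime where the second-order Hadamard differentiability argument used in the proof of Proposition~\ref{NullAsymptoticDist} takes over and delivers the weighted chi-square limit instead.
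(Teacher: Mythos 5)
Your proposal is correct and follows essentially the same route as the paper: the paper's proof simply invokes Proposition 4.6 of Gin\'e (1975), which is precisely the non-degenerate degree-two V-statistic (Hoeffding projection) CLT you spell out, mediated by the same square-root kernel representation $K_z(x,y)=\int g(x,\tau)g(\tau,y)\,f_{1,\frac{3}{2},\frac{1}{2}}(d\tau)$ that the paper makes explicit for $U_{z,q}^{(N)}$ via the function $u$. Two small bookkeeping points: the displayed $g$ contains the trivial character (there is no $-1$ in its closed form), which is exactly where the $-1$ in $\mu$ comes from, and the square-integrability of $K_z$ should be argued from $|K_z|\le \sum_{\lambda\neq 0} z^{|\lambda|}d_\lambda^2<\infty$ rather than from the determinantal denominator being bounded away from zero, since the Vandermonde-type factors do vanish at coincident eigenvalues.
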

\begin{proof} Proof follows from Proposition (4.6) of \cite{gine1975invariant} and is given in detail in section 4 of the supplementary material \cite{sepehri2017supplement}.
\end{proof}
\begin{remark}
A direct consequence is that $T_z^{(N)}$ is consistent against all fixed alternatives; not only the limiting distribution differs, so does the scaling. In particular, $\mb{P}_\nu \left(T_z^{(N)} > c_{z,1-\alpha}\right) \rightarrow 1$ as $N$ tends to infinity, for all alternatives $\nu$.
\end{remark}

\section{Beyond the Eigenvalues}\label{Sec:TestBeyondEigenValues}
Although the test based on $T_z^{(N)}$ proved successful in different examples, it failed to reject the null hypothesis against the alternative given by the new sampler of \citet{jones2011randomized}, even after only one step of the sampler. To overcome this deficiency, it is needed, and natural, to resort to the properties beyond the eigenvalues. This section presents a test for the full Haar measure on $SO(2n+1)$ based on the machinery of section \ref{Sec:SpectralTestsGeneralSpaces}.

Let $G = SO(2n+1)$. Given data $g_1, \ldots, g_N \in G$ independently drawn from a measure $\nu$ on $G$, consider testing the null hypothesis $H_0: \nu = \mu$, where $\mu$ is the uniform (Haar) measure. To construct a spectral test, use the orthonormal basis for $\mc{L}^2(G)$ given by the matrix coordinates of the irreducible representations; see section 1 in \cite{sepehri2017supplement}. Define the Fourier component corresponding to $\lambda$ as
\begin{align*}
\widehat{\pi}_N(\lambda) = \frac{1}{N} \sum_{i=1}^N \pi_\lambda (g_i).
\end{align*}
Note that $\widehat{\pi}_N(\lambda)$ is a $d_\lambda\times d_\lambda$ matrix. A test that rejects for large values of 
\begin{align}\label{Def:GeneralFullTest}
U_{\bm{c}}^{(N)} = N \sum_{\lambda\neq 0} c_\lambda \| \widehat{\pi}_N(\lambda)\|_F^2
\end{align}
can be used, for an arbitrary sequence of positive weights $\bm{c} = \{c_\lambda\}$. To find a closed form for $U_{\bm{c}}$, note that
\begin{align*}
 \|\widehat{\pi}_N(\lambda)\|_{F}^2 &= \Tr(\widehat{\pi}_N(\lambda)\widehat{\pi}_N(\lambda)^\ast)
 												= \Tr(\frac{1}{N^2} \sum_{i,j} \pi_\lambda(g_i)\pi_\lambda(g_j)^\ast)\\
 												&= \Tr(\frac{1}{N^2} \sum_{i,j} \pi_\lambda(g_i)\pi_\lambda(g_j^{-1}))
 												= \Tr(\frac{1}{N^2} \sum_{i,j} \pi_\lambda(g_i g_j^{-1}))\\
 												&= \frac{1}{N^2} \sum_{i,j} \Tr( \pi_\lambda(g_i g_j^{-1}))
 												= \frac{1}{N^2} \sum_{i,j} \chi_\lambda(g_i g_j^{-1}),
 \end{align*}
where the third equality follows from the fact that $\pi_\lambda$ is a unitary matrix. Therefore, one can write
\begin{align*}
U_{\bm{c}}^{(N)} = \frac{1}{N} \sum_{i,j} \sum_{\lambda\neq 0} c_\lambda \chi_\lambda(g_i g_j^{-1}).
\end{align*}
Using tools from representation theory of $SO(2n+1)$, a closed from expression for $\sum_\lambda c_\lambda \chi_\lambda(h)$ can be found for $\bm{c}$ of a particular form as follows; see section 3 in the supplementary material \cite{sepehri2017supplement}. For arbitrary parameters $z,q\in (0,1)$, there exists a set of positive weights $c_\lambda(z,q) $, given explicitly by equation (24) in the supplementary material \cite{sepehri2017supplement}, such that
\begin{align}\label{Eq:ClosedFormFullTest}
\sum_{\lambda \neq 0} c_\lambda(z,q) \chi_\lambda(g) = \frac{\prod_{k<j}(1-z^2 q^{k+j-2}) \prod_k (1+z q^{k-1})}{\prod_{k,j} (1-z q^{k-1}e^{i \theta_j} )(1-z q^{k-1} e^{-i \theta_j})} - 1,
\end{align}
where $1, e^{\pm i \theta_1},\ldots,e^{\pm i \theta_n}$ are eigenvalues of $g$. This motivates the following definition.
\begin{defin}\label{Defin:FullTest} Let $z,q \in (0,1)$ be arbitrary parameters. Define the test statistic as
\begin{align}
\begin{split}
U_{z,q}^{(N)} 
&= \frac{1}{N}\sum_{k,l=1}^N  \left(\frac{\prod_{i<j}(1-z^2 q^{i+j-2}) \prod_i (1+z q^{i-1})}{\prod_{i,j} (1-z q^{i-1} e^{i \theta^{k,l}_j})(1-z q^{i-1} e^{-i \theta^{k,l}_j})} -1\right),
\end{split}
\end{align}
where $1,e^{\pm i \theta^{k,l}_1},\ldots, e^{ \pm i \theta^{k,l}_n}$ are the eigenvalues of $g_k g_l^T$.
\end{defin}
\begin{remark} The test based on $U_{z,q}^{(N)}$ fits into the framework proposed by \citet{gine1975invariant}. In fact, the eigenfunction of the Laplace-Beltrami operator on $G$ are exactly the matrix coordinates of the irreducible representations of $G$, for all compact classical groups.
\end{remark}

The test based on $U_{z,q}^{(N)}$ was applied to the new sampler of \citet{jones2011randomized} and exhibited non-trivial power against the alternative distribution generated by a single iteration of the sampler. For $z=0.2, q= 0.4, n = 25$, and $N=200$ the histogram of 1000 values of $U_{z,q}^{(N)}$ is illustrated in Figure \ref{fig:HisFullTestRokh}. 
The power of the $5\%$-level test is 0.17 which is non-trivial although not particularly high. However, this might well be a result of the small sample size. Note that the testing problem is a non-parametric test of goodness-of-fit in $\binom{51}{2} = 1275$ dimensions with only $200$ observations. Note that recent work of \citet{arias2016remember} suggest that in the non-parametric goodness-of-testing problem in $d$ dimensions, the sample size $N$ has to be exponential in $d$ in order to have a test that discriminates against the alternatives of fixed distance from the null; that is, $\log (N) / d$ should be bounded below. In the case of $SO(51)$, $d=1275$; even $N = 200000$ results in $\log N / d \approx 0.0095$. For a brief description of their results see section 8 in the supplementary material \cite{sepehri2017supplement}.

\begin{figure}[h!]
  \centering
  \begin{minipage}[b]{0.6\textwidth}
    \includegraphics[width=\textwidth]{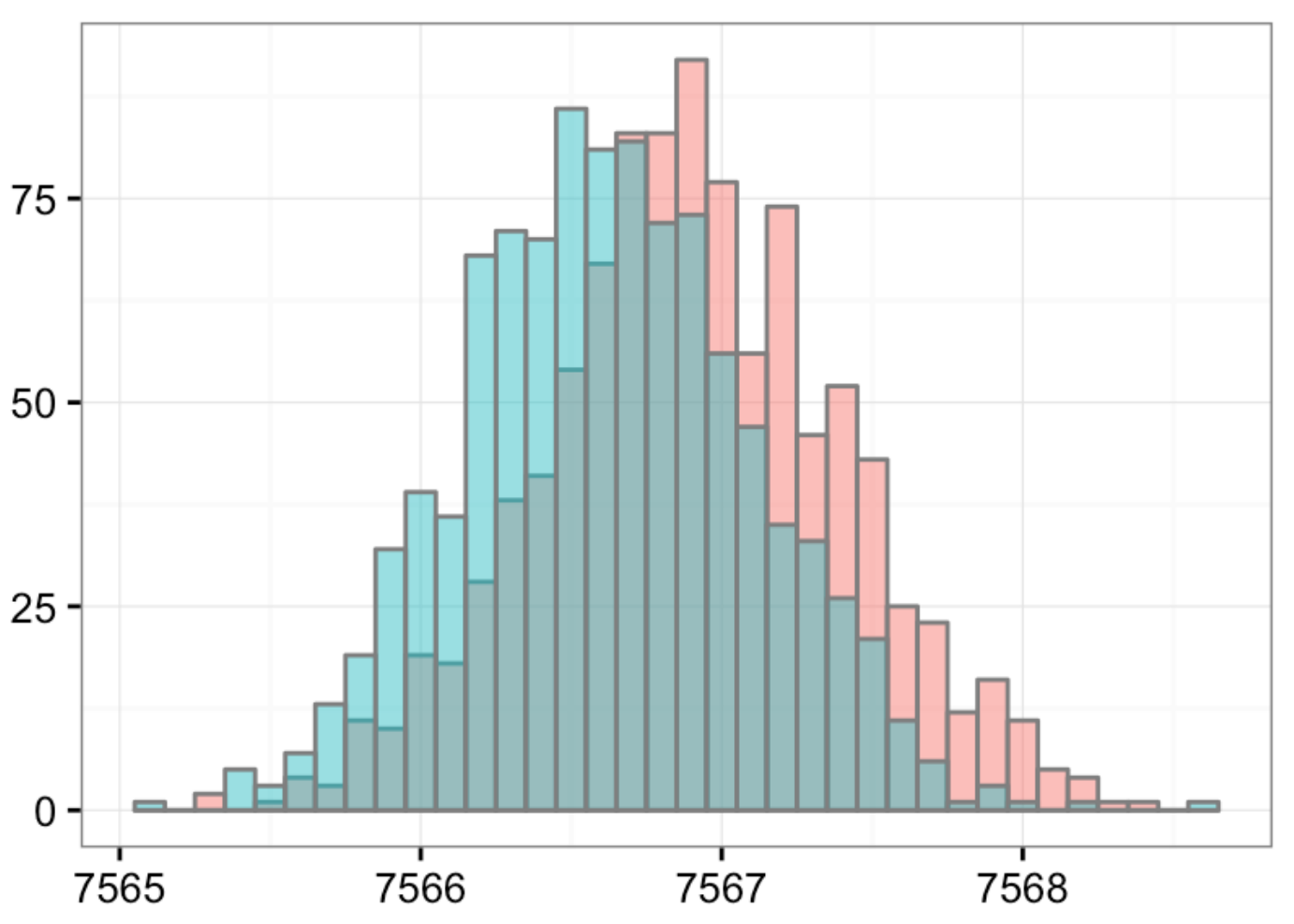}
  \end{minipage}
   \caption{Histogram of $U_{z,q}^{(N)}$ under Setup \ref{setup}, $ z= 0.2$, and $ q =0.4$. The alternative is the distribution of the orthogonal matrices generated by a single iteration of the new sampler.}
  \label{fig:HisFullTestRokh}
\end{figure}

Under the same setup as above the 1000 values under the null and alternative were compared using Anderson-Darling test, where alternative was taken to be the output of the new sampler after different number of iterations. The result is shown in Figure \ref{fig:AdKsValues} and Table \ref{table:AdKsPvalRokh}. It appears that the distribution of the output is not close to uniform after only one iteration. The null hypothesis is rejected at $5\%$ level, suggesting some departure from uniformity, for the distribution of the output after two iterations, but not beyond two steps. This is in agreement with the prescribed number of steps given in \citet{jones2011randomized} for the chain to mix.
\begin{figure}[h!]
  \centering
  \begin{minipage}[b]{0.45\textwidth}
    \includegraphics[width=\textwidth]{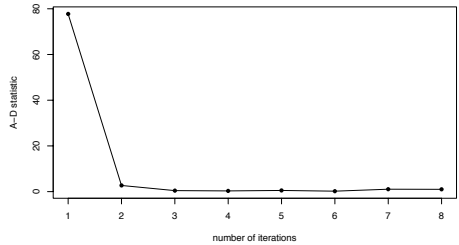}
  \end{minipage}
   \caption{Values of the Anderson-Darling statistic versus the number of iterations of the new sampler.}
  \label{fig:AdKsValues}
\end{figure} 

\begin{table}[h!]
\caption{$p$-values corresponding to different iterations of the new sampler}
\label{table:AdKsPvalRokh}
\centering
\resizebox{\columnwidth}{!}{%
\begin{tabular}{lcccccccc}
\hline
$\#$ of iterations & \multicolumn{1}{c}{\begin{tabular}{@{}c@{}}1\end{tabular}}& \multicolumn{1}{c}{\begin{tabular}{@{}c@{}}2\end{tabular}} & \multicolumn{1}{c}{\begin{tabular}{@{}c@{}}3\end{tabular}} & \multicolumn{1}{c}{\begin{tabular}{@{}c@{}}4\end{tabular}}&\multicolumn{1}{c}{\begin{tabular}{@{}c@{}}5\end{tabular}} &\multicolumn{1}{c}{\begin{tabular}{@{}c@{}}6  \end{tabular}} &\multicolumn{1}{c}{\begin{tabular}{@{}c@{}}7\end{tabular}}& \multicolumn{1}{c}{\begin{tabular}{@{}c@{}}8\end{tabular}}\\
\hline
  A-D test & 5.94e-43 & 3.92e-02 & 8.02e-01 & 9.25e-01 & 7.29e-01 & 9.90e-01 & 3.26e-01 & 3.52e-01\\
 \hline
\end{tabular}
}
\end{table}

\begin{remark} Power of the test based on $U^{(N)}_{z,q}$ indeed depends on the choice of $(z,q)$. Given the form of $c_{\lambda}(z,q)$, the test with a larger value of $z$ has more power against the alternatives which deviate from the uniform distributions in high frequencies. On the other hand. when the alternative differs from the uniform distribution on lower frequencies the test with smaller $z$ is more powerful. Our preliminary numerical investigations suggest that the alternatives considered in the present paper fall into latter category.

Dependence on $q$ is less clear, because the coefficient $c_\lambda (z,q)$'s relation to $q$ is more complicated. Figure \ref{fig:Weightsq} illustrates $\log(c_{\lambda}(1,q))$ for different values of $q$ and several partitions $\lambda$. $c_\lambda$ is increasing in $q$ but order of the coefficients for a fixed $q$ depends on $q$ which makes it hard to draw general conclusions.
\begin{figure}[h!]
  \centering
  \begin{minipage}[b]{0.435\textwidth}
    \includegraphics[width=\textwidth]{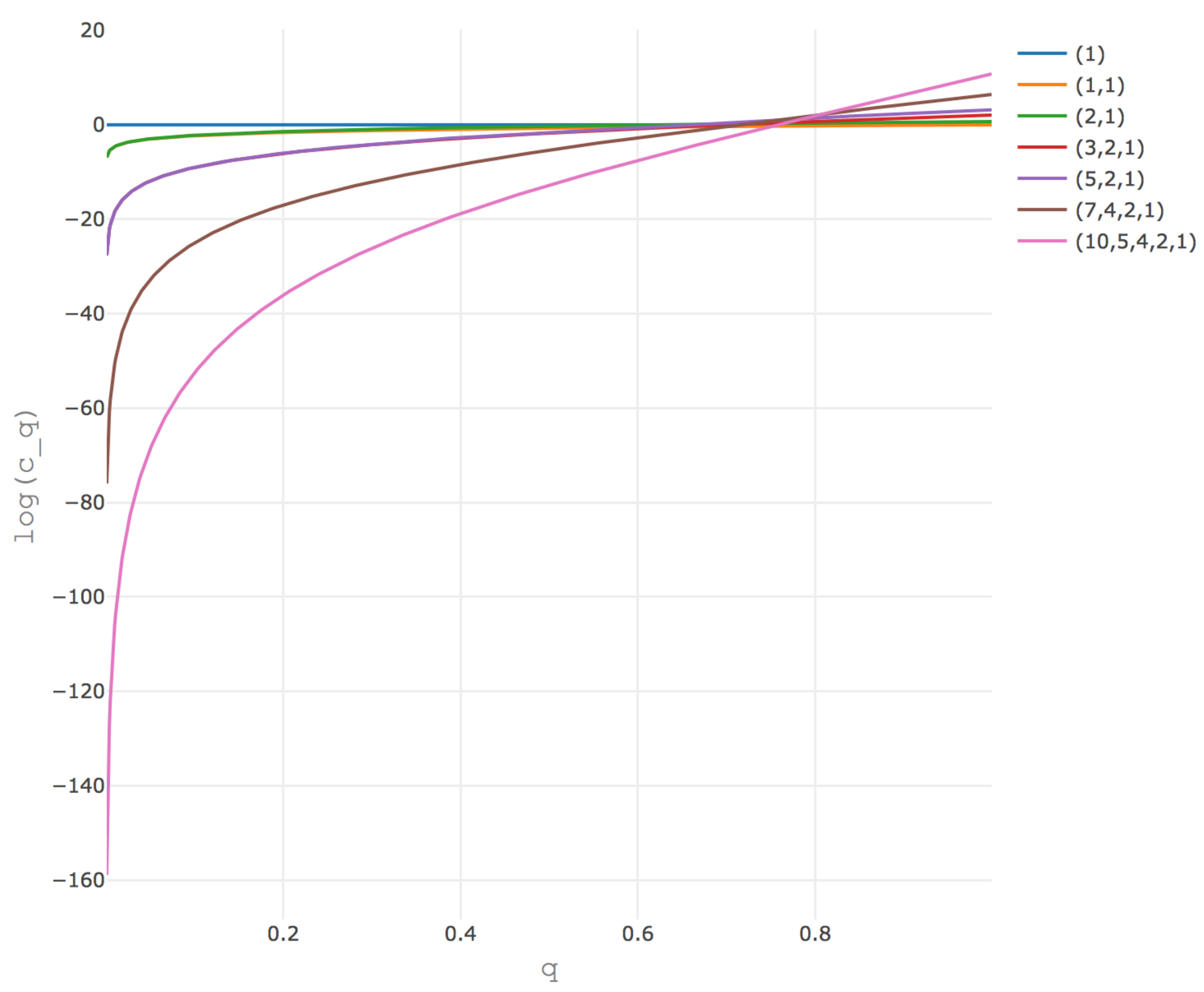}
  \end{minipage}\hfill
  \centering
  \begin{minipage}[b]{0.45\textwidth}
    \includegraphics[width=\textwidth]{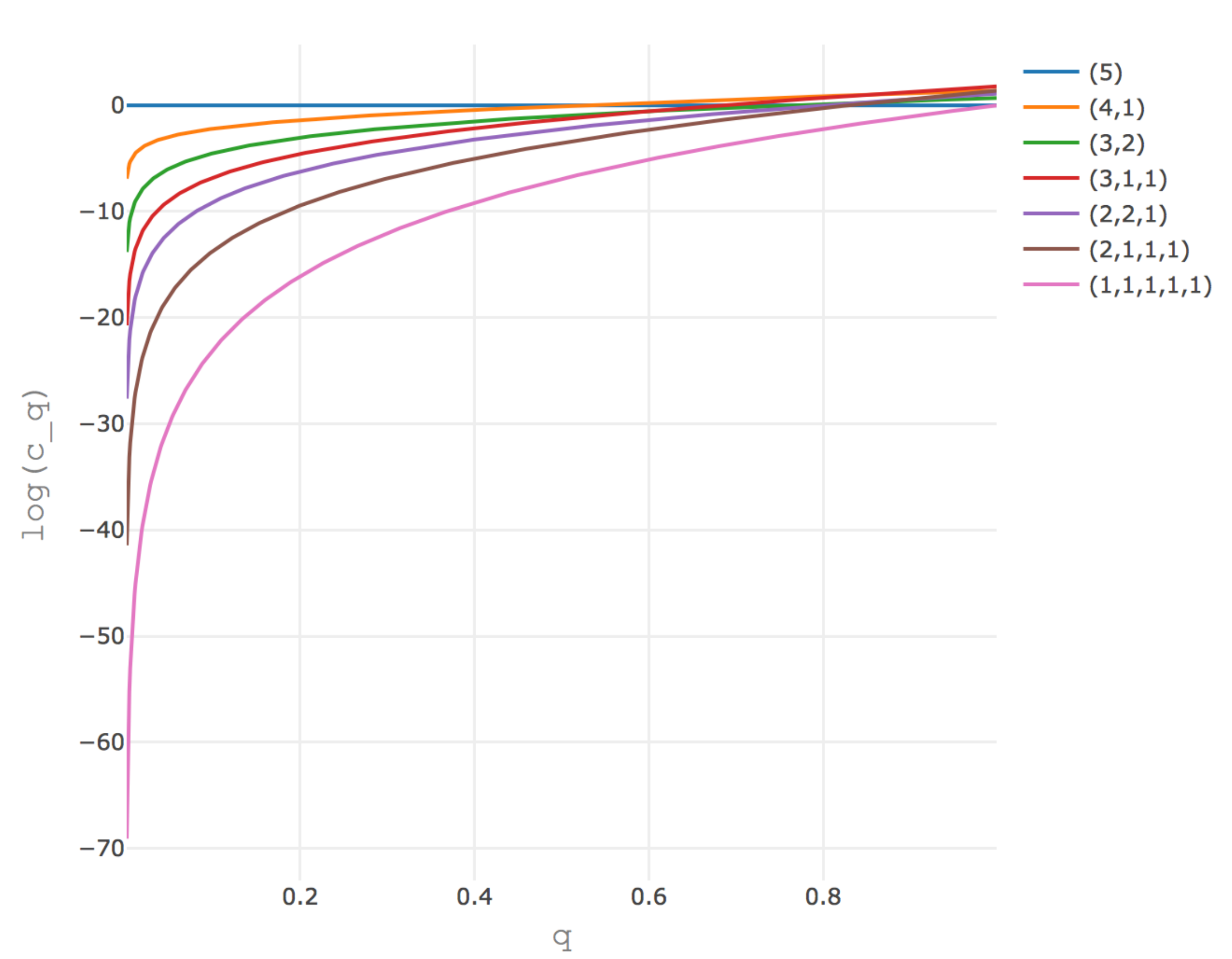}
  \end{minipage}
   \caption{Logarithmic graph of $c_\lambda(1,q)$ against $q$ for several $\lambda$. The left plot shows the graphs for various frequencies. The plot on the right shows the graphs for all $|\lambda| = 5$.}
  \label{fig:Weightsq}
\end{figure} 
\end{remark}

\subsection{Distribution under the null and alternative}\label{Sec:AsymNullAltFullTest}
The asymptotic distribution of $U_{z,q}^{(N)}$ under the null and fixed alternatives can be derived in a similar fashion to those of $T_z^{(N)}$.
\begin{prop}[Asymptotic null distribution]  Assume $g_1,\ldots, g_N$ are independent draws from the uniform distribution on $SO(2n+1)$,and $z, q\in(0,1)$. Then,
\begin{align*}
U_{z,q}^{(N)} \rightarrow U_{z,q} = \sum_{\lambda\neq 0} \frac{c_\lambda (z,q)}{d_\lambda} \chi^2_{d_\lambda^2},
\end{align*}
where $d_\lambda = \chi_\lambda (I)$ is the dimension of the irreducible representation corresponding to $\lambda$, $c_\lambda (z,q)$ is as in (\ref{Eq:ClosedFormFullTest}), and the chi-square variables are mutually independent.
\end{prop}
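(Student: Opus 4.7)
The plan is to mirror the argument of Proposition~\ref{NullAsymptoticDist}, with the scalar characters $\chi_\lambda$ replaced by the matrix-valued coordinates $\pi_\lambda^{i,j}$. On $SO(2n+1)$ every irreducible representation may be taken real orthogonal, so the Schur orthogonality relations give, for $\lambda,\mu\neq 0$,
\[
\int_G \pi_\lambda^{i,j}(g)\,\pi_\mu^{k,l}(g)\,dg \;=\; \frac{1}{d_\lambda}\,\delta_{\lambda,\mu}\,\delta_{i,k}\,\delta_{j,l},
\]
which yields $\mb{E}[\pi_\lambda^{i,j}(g)]=0$, $\text{Var}(\pi_\lambda^{i,j}(g)) = 1/d_\lambda$, and pairwise uncorrelation across $(i,j)$ within a block as well as across different $\lambda$. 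The multivariate CLT then shows that for any finite collection of indices, the vector $\sqrt{N}(\widehat{\pi}_N(\lambda)^{i,j})$ converges jointly to independent $\mc{N}(0,1/d_\lambda)$ variables. By the continuous mapping theorem, $N\|\widehat{\pi}_N(\lambda)\|_F^2 \Rightarrow \frac{1}{d_\lambda}\chi^2_{d_\lambda^2}$, and these chi-squares are mutually independent across $\lambda$, identifying the limit of every finite-dimensional marginal of $U_{z,q}^{(N)}$.

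The main step is to lift this to the infinite sum. Following the Hilbert-space setup used in the proof of Proposition~\ref{NullAsymptoticDist}, introduce
\[
\mb{D} \;=\; \Bigl\{\bm{A}=(A_\lambda)_{\lambda\neq 0}\;:\;\sum_{\lambda\neq 0} c_\lambda(z,q)\,\|A_\lambda\|_F^2 < \infty\Bigr\},
\]
a Hilbert space under $\langle \bm{A},\bm{B}\rangle_{z,q} = \sum_\lambda c_\lambda(z,q) \Tr(A_\lambda B_\lambda^\top)$, and let $\phi(\bm{A}) = \langle \bm{A},\bm{A}\rangle_{z,q}$. Writing $\widehat{\pi}_N = (\widehat{\pi}_N(\lambda))_\lambda$, one has $U_{z,q}^{(N)} = N\phi(\widehat{\pi}_N)$, and $\phi$ is second-order Hadamard directionally differentiable with $\phi^\prime_{\bm{A}}(h)=2\langle \bm{A},h\rangle_{z,q}$ and $\phi^{\prime\prime}_{\bm{A}}(h) = 2\phi(h)$. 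Theorem 1.4.8 of \citet{van1996weak} reduces weak convergence in the countable product $\mb{D}$ to weak convergence of all finite-dimensional marginals, so the finite-dimensional convergence above promotes to $\sqrt{N}\widehat{\pi}_N \Rightarrow Z$ in $\mb{D}$, where $Z=(Z_\lambda)$ has independent blocks with $Z_\lambda$ a $d_\lambda\times d_\lambda$ matrix of i.i.d.\ $\mc{N}(0,1/d_\lambda)$ entries. The second-order Delta method \citep{romisch2005delta} then yields
\[
U_{z,q}^{(N)} \;=\; N\phi(\widehat{\pi}_N) \;\Rightarrow\; \phi(Z) \;=\; \sum_{\lambda\neq 0} c_\lambda(z,q)\,\|Z_\lambda\|_F^2 \;=\; \sum_{\lambda\neq 0} \frac{c_\lambda(z,q)}{d_\lambda}\chi^2_{d_\lambda^2}.
\]

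The principal obstacle is the tightness of $\sqrt{N}\widehat{\pi}_N$ in $\mb{D}$ and showing that $Z$ lies in $\mb{D}$ almost surely; both reduce to the summability condition $\sum_{\lambda\neq 0} c_\lambda(z,q)\, d_\lambda < \infty$. This is available essentially for free by evaluating the closed form (\ref{Eq:ClosedFormFullTest}) at $g=I$, where $\chi_\lambda(I) = d_\lambda$ and all angles vanish:
\[
\sum_{\lambda\neq 0} c_\lambda(z,q)\,d_\lambda \;=\; \frac{\prod_{k<j}(1-z^2 q^{k+j-2})\prod_k(1+z q^{k-1})}{\prod_{k,j}(1-z q^{k-1})^2} \;-\; 1 \;<\; \infty
\]
for every $z,q\in(0,1)$. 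This yields $\mb{E}\,\phi(\sqrt{N}\widehat{\pi}_N) = \sum_\lambda c_\lambda(z,q)\,d_\lambda$ uniformly in $N$, establishing the tightness needed for the Delta method and guaranteeing $L^1$-convergence of the candidate limit series. With these ingredients the proof is completed exactly in parallel to Proposition~\ref{NullAsymptoticDist}.
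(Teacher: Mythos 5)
Your proposal is correct and follows essentially the same route as the paper: the paper's proof simply cites the Schur orthogonality relations for the matrix coordinates, the central limit theorem, and $\mb{E}[(\pi^\lambda_{ij})^2]=1/d_\lambda$, leaving the lifting to the infinite sum implicit by analogy with Proposition \ref{NullAsymptoticDist}, which is exactly the Hilbert-space/second-order Delta method argument you spell out. Your explicit verification of $\sum_{\lambda\neq 0} c_\lambda(z,q)\,d_\lambda<\infty$ via the closed form at $g=I$ is a welcome detail the paper leaves unstated, but it does not change the approach.
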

\begin{proof}
The statement follows from the orthogonality relations between the matrix-coordinates of the irreducible representations, the central limit theorem, and the fact that
\begin{align*}
\mb{E}[(\pi^\lambda_{ij})^2] = 1/d_\lambda.
\end{align*}
\end{proof}
The asymptotic distribution under the alternative is given as follows.
\begin{prop}[Asymptotic alternative distribution] Let $F$ be a distribution on $SO(2n+1)$ different from the uniform measure. Given data $g_1,\ldots,g_N$ independently drawn from $F$, $U_{z,q}^{(N)}$ is asymptotically normally distributed. In fact,
\begin{align*}
\sqrt{N}(U_{z,q}^{(N)}/N - \upsilon) \rightarrow \mc{N}(0,\sigma^2) \quad \text{ as } N\rightarrow \infty,
\end{align*}
with $\upsilon = \int r^2(g) F(dg)-1$ and $\sigma^2 = 4\left[\int\left(\int r(g)u(g,h)\mu(dg)\right)^2F(dh)-\upsilon^2 \right] $, where $u$ is defined below in (\ref{UNALTproof}) and $r$ is defined as
$r(g)=\int u(g,h)F(dh)$
\end{prop}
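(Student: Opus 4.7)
The plan parallels the alternative-distribution argument for $T_z^{(N)}$ and ultimately invokes a Sobolev-statistic CLT of Gin\'e-type (Proposition 4.6 of \cite{gine1975invariant}). I would first exhibit $U_{z,q}^{(N)}/N$ as a V-statistic whose kernel $K$ factorizes through a ``square-root'' kernel $u$, then linearize via the functional delta method on $L^2(\mu)$.

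\textbf{Step 1 (V-statistic with square-root kernel).} From the derivation leading to Definition \ref{Defin:FullTest},
\begin{equation*}
\frac{U_{z,q}^{(N)}}{N} = \frac{1}{N^2}\sum_{k,l=1}^N K(g_k,g_l), \qquad K(g,h) := \sum_{\lambda\neq 0} c_\lambda(z,q)\,\chi_\lambda(gh^{-1}),
\end{equation*}
with $K$ evaluated through the closed form (\ref{Eq:ClosedFormFullTest}). Positivity and summability of $\{c_\lambda(z,q)\}$---seen by setting $g=h=I$ in (\ref{Eq:ClosedFormFullTest})---allow the construction, via the Peter-Weyl basis with coefficients $\sqrt{c_\lambda(z,q)}$, of a symmetric kernel $u\in L^2(G\times G,\mu\otimes\mu)$ obeying the convolution identity
\begin{equation*}
\int_G u(g,k)\,u(h,k)\,\mu(dk) = K(g,h) + 1.
\end{equation*}
This is the $u$ referenced in the statement. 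Substituting gives the Gin\'e-style representation
\begin{equation*}
\frac{U_{z,q}^{(N)}}{N} = \int_G S_N(k)^2\,\mu(dk) - 1, \qquad S_N(k) := \frac{1}{N}\sum_{i=1}^N u(g_i,k).
\end{equation*}

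\textbf{Step 2 (Hilbert-space CLT and delta method).} Since the $u(g_i,\cdot)$ are i.i.d.\ and bounded in $L^2(\mu)$, the Hilbert-space CLT gives $\sqrt{N}(S_N-r)\Rightarrow Z$ in $L^2(\mu)$, where $r(k):=\int u(g,k)F(dg)$ and $Z$ is a centered Gaussian element whose covariance operator is inherited from $F$. By symmetry of $u$, this agrees with $r(g)=\int u(g,h)F(dh)$ of the statement. The functional $\Phi(S) := \int S^2\, d\mu - 1$ is Fr\'echet differentiable with $\Phi'_r(h) = 2\int r\,h\,d\mu$; an application of the functional delta method (as in the proof of Proposition \ref{NullAsymptoticDist}) then produces
\begin{equation*}
\sqrt{N}\Bigl(U_{z,q}^{(N)}/N - \Phi(r)\Bigr) \;\Rightarrow\; 2\int_G r(k)\,Z(k)\,\mu(dk),
\end{equation*}
which is a centered Gaussian. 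Non-degeneracy---ensuring a truly normal limit rather than a $\chi^2$---holds whenever $F\neq\mu$, because then at least one Peter-Weyl Fourier coefficient $\widehat F(\lambda)$ for $\lambda\neq 0$ is nonzero, forcing $r$ to be non-constant $\mu$-a.e.

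\textbf{Step 3 (Identification of $\upsilon$ and $\sigma^2$).} Unwinding $\Phi(r)$ via the factorization $\int u(g,k)u(h,k)\mu(dk)=K(g,h)+1$ and Fubini reduces $\Phi(r)$ to $\int\!\int K(g,h)F(dg)F(dh)$, which is the V-statistic mean, then rewritten as $\int r^2(g)F(dg)-1=\upsilon$ using the defining identity for $r$. The variance of the delta-method limit, $4\int\!\int r(k_1)r(k_2)\,\mathrm{Cov}(Z(k_1),Z(k_2))\,\mu(dk_1)\mu(dk_2)$, reduces after expanding the covariance operator of $Z$ to $\sigma^2 = 4\bigl[\int(\int r(g)u(g,h)\mu(dg))^2 F(dh) - \upsilon^2\bigr]$.

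The main obstacle is the bookkeeping in Step 3: tracking how the $\mu$-integral in the square-root factorization of $K$ interleaves with the $F$-integrals in the delta-method linearization, so as to identify both $\upsilon$ and $\sigma^2$ with the coordinate-free expressions stated. Both reductions rely on Peter-Weyl orthogonality and the explicit choice of $\sqrt{c_\lambda(z,q)}$-coefficients in $u$; once completed, the conclusion follows from Gin\'e's Proposition 4.6 applied on the compact group $SO(2n+1)$ equipped with Haar measure.
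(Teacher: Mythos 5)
Your proposal takes essentially the same route as the paper: the paper's proof consists precisely of your Step 1 — a lemma expressing $U_{z,q}^{(N)} = \frac{1}{N}\int\bigl|\sum_{i=1}^N u(g_i,g)\bigr|^2\mu(dg)$ with $u(g,h)=\sum_{\lambda\neq 0}\sqrt{d_\lambda c_\lambda(z,q)}\,\chi_\lambda(g^Th)$ — followed by an appeal to Proposition 4.6 of Gin\'e (1975), whose content is exactly your Hilbert-space CLT plus delta-method linearization. The only adjustments: the square-root coefficients must be $\sqrt{d_\lambda c_\lambda(z,q)}$ rather than $\sqrt{c_\lambda(z,q)}$, since $\int \chi_\lambda(g^Tk)\chi_\lambda(h^Tk)\,\mu(dk)=\chi_\lambda(g^Th)/d_\lambda$, and the paper's $u$ omits the trivial component, so the representation holds exactly without your $\pm 1$ bookkeeping.
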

\begin{proof}
Proof follows from Proposition (4.6) of \cite{gine1975invariant} and the following lemma. The lemma is proved in section 4 of the supplementary material \cite{sepehri2017supplement}
\begin{lemma} For $g_1,\ldots, g_N \in SO(2n+1)$ one has 
\begin{align}\label{UNexp}
U_{z,q}^{(N)} = \frac{1}{N} \int \left| \sum_{i=1}^N u(g_i,g) \right|^2 \mu(dg),
\end{align}
where $u$ is defined through
\begin{align}\label{UNALTproof}
u(g,h) = \sum_{\lambda\neq 0} \sqrt{d_\lambda c_\lambda (z,q)}\chi_\lambda (g^T h).
\end{align}
\end{lemma}
\end{proof}
\begin{remark}
A direct consequence is that $U_{z,q}^{(N)}$ is consistent against all fixed alternatives; not only the limiting distribution differs, so does the scaling.
\end{remark}

\begin{remark} The associate editor brought to our attention a recent work of \citep{kerkyacharian2012concentration} that provides concentration inequalities and confidence band for a class of needlet density estimators on compact homogeneous manifolds, in particular on compact classical groups. Without getting into details, the following is a high level description of their approach. They introduce a needlet projection kernel $A_j(x,y)$ of order $j$ for any non-negative integer $j$. Then, given observations $x_1, \ldots , x_n$ from a density $f$, they define a needlet density estimator through
\begin{align*}
f_n (j,y) = \frac{1}{n} \sum_{i=1}^n A_j(x_i , y).
\end{align*}
Provided that $f$ is bounded they prove the following concentration inequality.
\begin{prop}[Proposition 4 in \citep{kerkyacharian2012concentration}] Let $\mathbf{M}$ be a compact homogeneous manifold and suppose $f : \mathbf{M} \rightarrow [ 0 , \infty)$ is bounded. We have for every $n \in \mb{N}$, every $j \in \mb{N}$, every $\Omega \subset \mathbf{M}$, and every $x \in \mathbf{M}$ that 
\begin{align*}
\mb{P} \left( \sup_{y \in \Omega} | f_n(j ,y ) - \mb{E} f_n(j ,y )  |  \ge \sigma^R (\Omega , n , j , x)\right) \le e^{-x},
\end{align*}
where $ \sigma^R (\Omega , n , j , x) $ depends on $\mathbf{M}, n , j , \Omega \text{and} x$, and is defined explicitly in their paper. 
\end{prop}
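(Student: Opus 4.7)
The plan is to view $f_n(j,y) - \mb{E} f_n(j,y) = \frac{1}{n} \sum_{i=1}^n \bigl( A_j(x_i,y) - \mb{E} A_j(x_1,y)\bigr)$ as an empirical process indexed by the family $\mc{F}_{j,\Omega} = \{ A_j(\cdot,y) : y \in \Omega\}$, and to control its supremum by combining a Bousquet/Talagrand-type concentration inequality for bounded empirical processes with a chaining argument that exploits the localization of the needlet kernel.

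First I would collect the three quantities a Bousquet-type bound requires: (i) a sup-norm bound of the form $\|A_j(\cdot,y)\|_\infty \lesssim 2^{jd/2}$ with $d = \dim \mathbf{M}$, from needlet concentration together with the volume estimates on $\mathbf{M}$; (ii) a weak-variance bound $\mb{E}\, A_j(x_1,y)^2 \le \|f\|_\infty \int |A_j(x,y)|^2\, dx \lesssim \|f\|_\infty 2^{jd}$, again via localization of $A_j$; and (iii) a modulus-of-continuity estimate controlling $|A_j(\cdot,y) - A_j(\cdot,y')|$ as $y' \to y$, coming from the smoothness of the needlet construction on $\mathbf{M}$.

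With these in hand, Bousquet's inequality yields a pointwise tail bound of the schematic form $\sqrt{2\sigma_j^2 x / n} + M_j x/(3n)$ at each fixed $y$, with $\sigma_j^2$ and $M_j$ coming from (i) and (ii). To extend to the supremum over $y \in \Omega$ one uses generic chaining along a sequence of $\varepsilon$-nets of $\Omega$: the metric-entropy integral is finite because $\Omega \subset \mathbf{M}$ is precompact and the modulus of continuity in (iii) is effectively Lipschitz in $y$. Bundling the tails along the chain produces the exponential $e^{-x}$ and, after carefully tracking constants, assembles the explicit form of $\sigma^R(\Omega, n, j, x)$ claimed in the statement.

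The hard part, I expect, is not this soft-analytic skeleton but the concrete verification that the needlet kernel on a general compact homogeneous manifold really satisfies the required localization, smoothness, and second-moment bounds with constants having the correct dependence on $j$, $d$, and the geometry of $\Omega$. This is where the representation-theoretic machinery (Peter--Weyl decomposition, zonal-function manipulations and kernel estimates in the same spirit as the Cauchy identities derived earlier in the paper) must be pressed into service, so that the constants entering $\sigma^R$ are sharp enough for the stated inequality to hold with tail $e^{-x}$ rather than a weaker bound.
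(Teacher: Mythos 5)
This proposition is not proved in the paper at all: it is quoted verbatim as Proposition~4 of \citet{kerkyacharian2012concentration}, with the quantity $\sigma^R(\Omega,n,j,x)$ defined only in that reference, so there is no internal argument for you to match. Your sketch does point in the direction the cited authors actually take --- treat $f_n(j,\cdot)-\mb{E}f_n(j,\cdot)$ as an empirical process indexed by the needlet projection kernels, apply a Talagrand/Bousquet-type concentration inequality for its supremum, and feed in sup-norm, variance, and entropy (or VC-type) bounds coming from the localization of $A_j$ --- so as a high-level reconstruction it is reasonable.

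The genuine gap is that the statement being proved \emph{is} the explicit bound: its whole content is the concrete form of $\sigma^R(\Omega,n,j,x)$ with the correct dependence on $n$, $j$, $\Omega$ and the deviation parameter $x$, and your proposal never produces it; you explicitly defer ``the hard part,'' namely verifying the kernel localization and moment bounds with sharp constants, which is exactly where the proof lives. Two further points would need repair even as a sketch. First, the sup-norm scaling you quote is off: for a projection-type needlet kernel on a $d$-dimensional manifold one has $\sup_y\|A_j(\cdot,y)\|_\infty \lesssim 2^{jd}$, while $2^{jd/2}$ is the order of the $L^2$-norm $\bigl(\int A_j(x,y)^2\,dx\bigr)^{1/2}$; conflating the two changes the Bernstein-type term in the bound. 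Second, the machinery needed here is not the representation-theoretic apparatus of the present paper (Cauchy identities, character computations); the required localization estimates come from the smooth Littlewood--Paley decomposition and cubature constructions on compact homogeneous manifolds developed in the cited work and its antecedents. So the proposal is a plausible outline of the external proof, but it does not establish the stated inequality.
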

In the case of $f$ being the uniform distribution, $\mb{E} f_n(j ,y ) = f = \text{constant}$. Therefore, the concentration inequality above gives a confidence band for the density estimator $f_n(j,y)$. In particular, this confidence band can be used to construct \textbf{non-asymptotic} tests of goodness-of-fit for the uniform distribution on compact classical groups. The constants in the definition of $\sigma^R (\Omega , n , j , x)  $ are computed explicitly for $SO(3)$ in the Supplementary material.
\end{remark}

\section{Numerical Comparison of Different Tests}\label{Sec:Numeric}
This section compares different tests discussed in this paper on the benchmark examples, with a particular focus on detection of the cutoff, as well as the new sampler. Setup \ref{setup} is considered ($n=51$). Focus on the following four test: Rayleigh's test, Gine's test, $T_{z}^{(N)}$, and $U_{z,q}^{(N)}$. The numerical observations are summarized below.
\subsubsection*{The benchmark examples}
Each test was computed 1000 times on the samples generated by the benchmark examples for different number of steps. Each of the 1000 simulations were based on $N=200$ observations; $T_z^{(N)}$ was computed with $z=0.5$ and $U_{z,q}^{(N)}$ with $z=0.2$ and $q=0.4$. The samples were generated using $k$ steps for
\begin{align*}
k \in \{100,150,200,250,300,350,400,450,500\}
\end{align*}
for the Kac's walk and 
\begin{align*}
k \in \{50,75,90,100,110,125,140,150,175,200\}.
\end{align*}
for product of random reflections.
For each fixed number of steps the 1000 values were compared to those corresponding to the uniform distribution using the Anderson-Darling test. Figure \ref{fig:ADValuesKacALLtests} illustrates the values are plotted against the number of steps of the chain.
\begin{figure}[h!]
  \centering
  \begin{minipage}[b]{0.48\textwidth}
    \includegraphics[width=\textwidth]{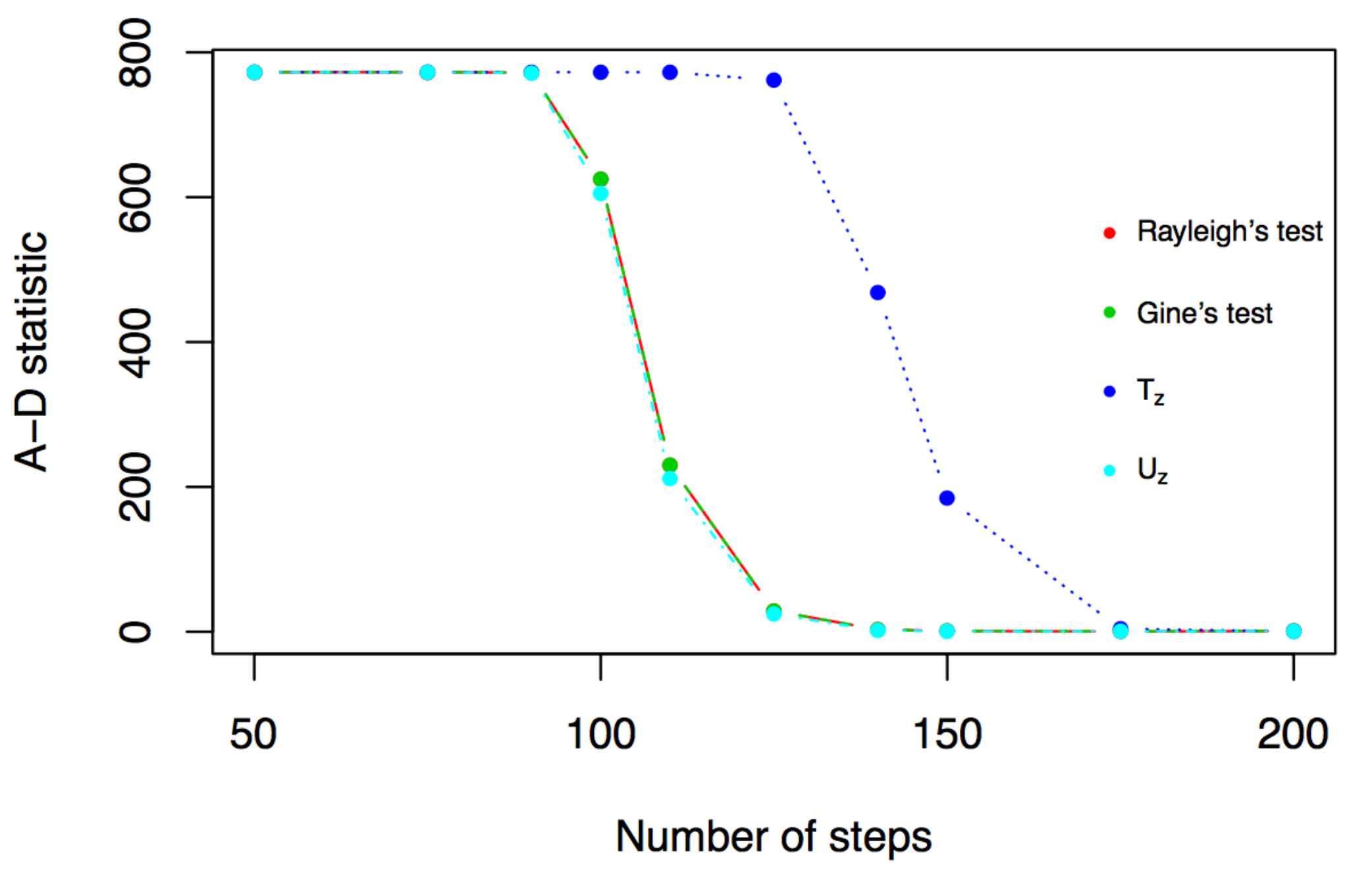}
  \end{minipage}
  \hfill
  \begin{minipage}[b]{0.48\textwidth}
    \includegraphics[width=\textwidth]{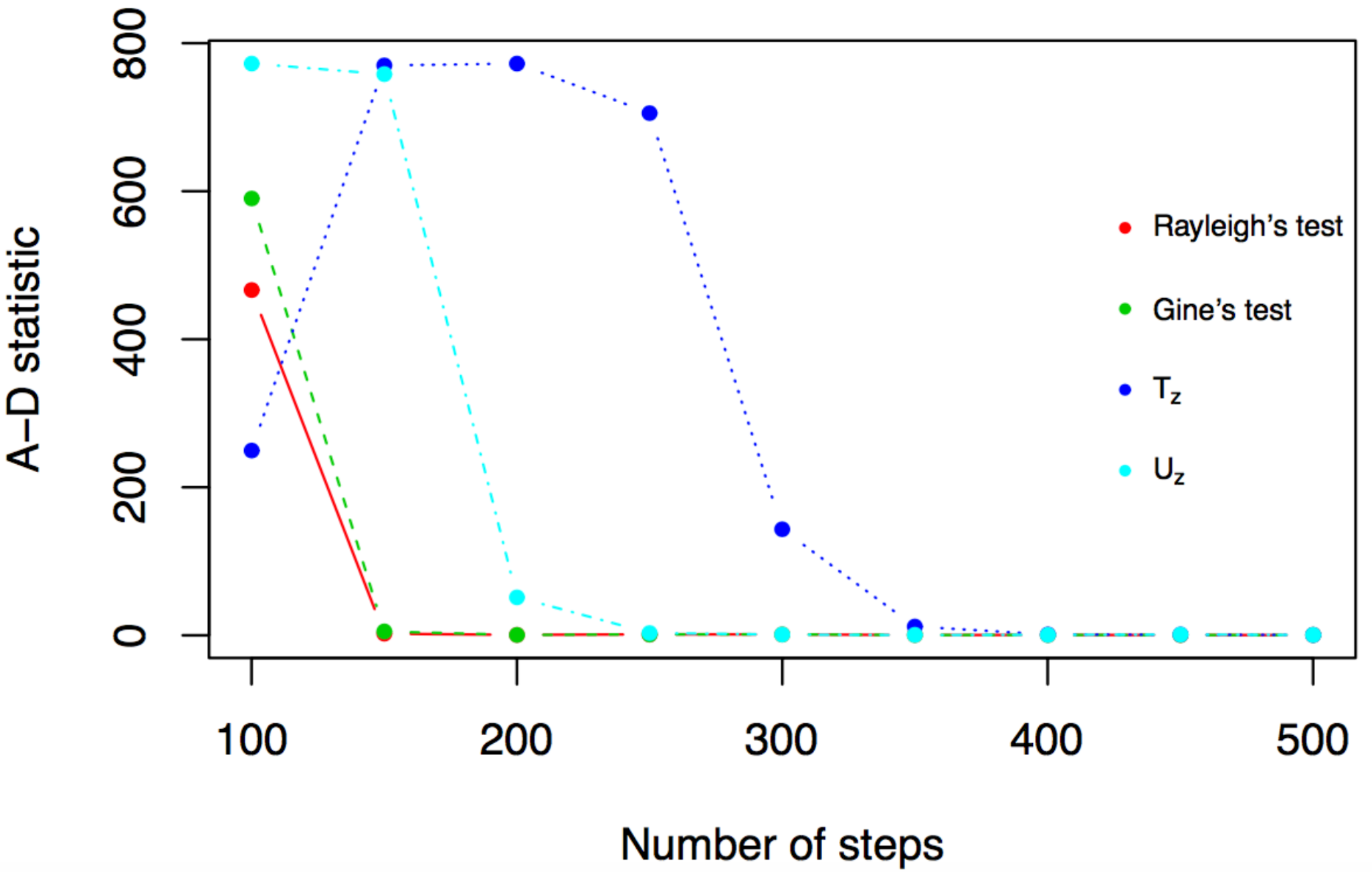}
  \end{minipage}
   \caption{Values of the Anderson-Darling statistic for comparison of the uniform sample and the product of random reflections (left) and the Kac's walk (right).}
  \label{fig:ADValuesKacALLtests}
\end{figure} 

Figure \ref{fig:ADValuesKacALLtests} suggests that the Gine's test has the least power against the alternative generated be the Kac's walk among the four tests considered here. The Rayleigh's test and $U_{z,q}^{(N)}$ seem to perform similarly, indicating some evidence for a cutoff but, perhaps, earlier than it should possibly occur. The test based on $T_z^{(N)}$ outperforms the other three tests and provides evidence that a cutoff does not occur with less than 350 step, if it occurs at all.

A similar but slightly different result holds for product of random reflection; the Rayleigh's test has the least power, the Gine's test and $U_{z,q}^{(N)}$ are qualitatively identical. Again, $T_z^{(N)}$ is superior to the other three tests; it picks up the occurrence of the cutoff and suggests that it might happen in around 175 steps.
\subsubsection*{The new sampler}
The same procedure was repeated for $1\le k \le 8$ iterations of the new sampler of \citet{jones2011randomized}.
The result is shown in Figure \ref{fig:ADValuesRokhALLtests}.
\begin{figure}[h!]
  \centering
  \begin{minipage}[b]{0.7\textwidth}
    \includegraphics[width=\textwidth]{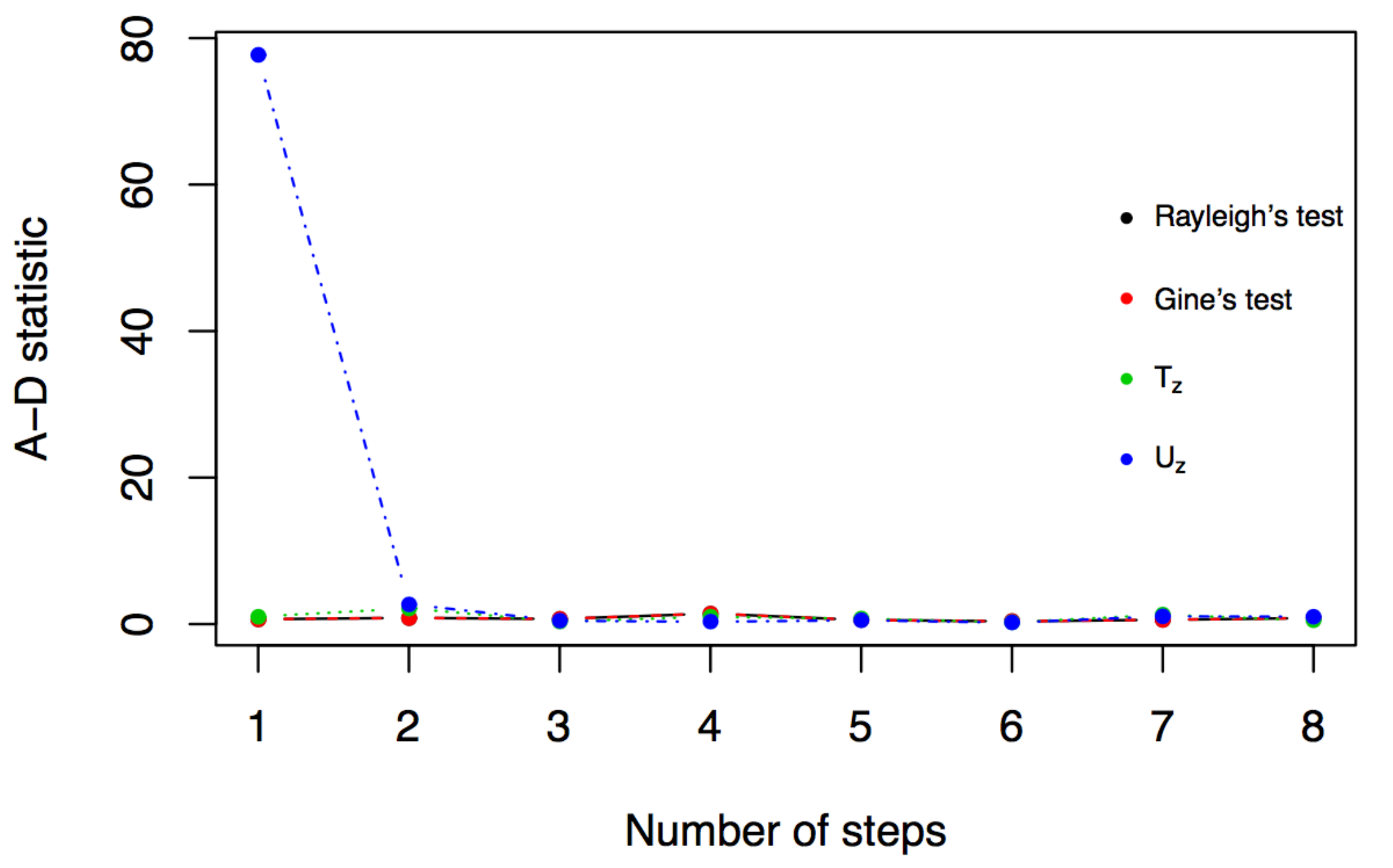}
  \end{minipage}
   \caption{Values of the Anderson-Darling statistic for comparison of the uniform sample and the new sampler.}
  \label{fig:ADValuesRokhALLtests}
\end{figure} 

The Rayleigh's test, the Gine's test, and the test based on $T_z^{(N)}$ exhibit no power against this alternative. Only $U_{z,q}^{(N)}$ has power against the alternative generated by a single iteration of the sampler. It also rejects the null hypothesis at $5\%$-level for the sample generated by two iterations of the sampler. It, too, has no power beyond two iterations. This is in agreement with the recommendation of \citet{jones2011randomized} on the number of iterations needed for mixing of the sampler.

\section{Asymptotic Properties Under Local Alternatives}\label{Sec:LocalAsymp}
This section studies properties of $T_z^{(N)}$ against local alternatives, i.e.\ alternatives that approach the null as the sample size grows. The case of $U_{z,q}^{(N)}$ is similar. Section 7 in the supplementary material \cite{sepehri2017supplement} presents a brief introduction to Le Cam's theory of \emph{asymptotically normal experiments}, which is the framework used here to study the local properties, as well as a detailed analysis of the local properties of the tests of this paper.

\subsection{Power calculations under local alternatives}\label{Sec:LocalPowerEigTEstFINITEDim}
The following standard local hypothesis testing setting is considered here. Let $\{f(\cdot\mid \theta) \mid \theta \in \Omega\}$ be a Q.M.D. family of density functions with respect to the eigenvalue distribution induced by the Haar measure, where $\Omega \subset \mb{R}^k$ for a fixed $k$. Assume that $f(\cdot\mid \theta_0) = 1$, that is, $\theta_0$ corresponds to $f_{1,\frac{3}{2},\frac{1}{2}}$. Given data $x_1,\ldots, x_N \sim f(\cdot\mid \theta)$, consider testing $H_0: \theta = \theta_0$ against $H_1: \theta = \theta_0+ h/\sqrt{N}$ for a fixed $h \in \Omega$. Let $\ell(x\mid \theta) = \log f(x\mid \theta)$ be the \textit{log-likelihood function}, $\eta (x\mid \theta) = \nabla_\theta \ell (x\mid \theta) $ the \textit{score function}, and $\mc{I}(\theta) = - \mb{E}_\theta \nabla_\theta^2 \ell (x\mid \theta)$ the \textit{Fisher information matrix} at $\theta$. Let $L_N$ denote the logarithm of the likelihood ratio of the data; Le Cam's first lemma \cite[Theorem 12.2.3]{lehmann2006testing} asserts
\begin{align}\label{Eq:LogLikelihoodBasicEig}
\begin{split}
L_N &= \sum_{i=1}^N \ell(x_i\mid \theta_0 + h/\sqrt{N}) - \ell(x_i\mid \theta_0)\\
		& = \left(\frac{ \sum_{i=1}^N \eta (x_i\mid \theta_0)}{\sqrt{N}}\right)^T h + \frac{1}{2} h^T \left( \frac{\sum_{i=1}^N \nabla_\theta^2 \ell (x_i\mid \theta_0)}{N} \right)h + o_p(1).
\end{split}
\end{align}
Since the score function of QMD families is square-integrable, one has
\begin{align}\label{Eq:L2ExpansionScoreEig}
\eta (\cdot \mid \theta_0) = \sum_\lambda \widehat{\eta}(\lambda) \chi_\lambda(\cdot),
\end{align}
where $\widehat{\eta}(\lambda)$ is the Fourier coefficient $\int \eta(x\mid \theta_0) \chi_\lambda (x) dx$ and the equality is interpreted in $\mc{L}^2(f_{1,\frac{3}{2},\frac{1}{2}})$. Therefore,
\begin{align*}
L_N  = \sum_\lambda \widehat{\eta}(\lambda)^T h \left(\frac{1}{\sqrt{N}} \sum_{i=1}^N \chi_\lambda(x_i)\right) + \frac{1}{2} h^T \left( \frac{1}{N}\sum_{i=1}^N \nabla_\theta^2 \ell (x_i\mid \theta_0) \right)h + o_p(1).
\end{align*}
As $N\rightarrow \infty$, using Law of Large Numbers and Central Limit Theorem, one has
\begin{align*}
\frac{1}{N}\sum_{i=1}^N \nabla_\theta^2 \ell (x_i\mid \theta_0)  \rightarrow -\mc{I}(\theta_0)\quad \text{and} \quad
\frac{1}{\sqrt{N}} \sum_{i=1}^N \chi_\lambda(x_i)  \rightarrow Z_\lambda,
\end{align*}
where $Z_\lambda$ are independent standard normal variables. The joint limiting distribution of $(T_z^{(N)},L_N)$ is
\begin{align*}
(T_z^{(N)},L_N) \rightarrow (T_z, L),
\end{align*}
where
\begin{align}\label{Def:LimitJointLocalEigen}
T_z = \sum_{\lambda\neq 0} z^{|\lambda|} Z_\lambda^2 \quad \text{and} \quad L = \sum_\lambda (\widehat{\eta}(\lambda)^T h) Z_\lambda -  \frac{1}{2} h^T \mc{I}(\theta_0) h .
\end{align}
Since $f(\cdot\mid \theta)$ is Q.M.D., Le Cam's third lemma \cite[Theorem 12.3.3]{lehmann2006testing} implies that the limiting distribution of $T_z^{(N)}$ under $f(\cdot \mid \theta_0 + h/\sqrt{N})$ is given by the following characteristic function
\begin{align*}
\mb{E}_h e^{i t T_z} = \mb{E}_0 [e^{i t T_z} e^L] .
\end{align*}
Using (\ref{Def:LimitJointLocalEigen}) and the fact that $\{Z_\lambda\}$ are independent standard normal variables, one has
\begin{align*}
\mb{E}_h e^{i t T_z}  &= e^{-  \frac{1}{2} h^T \mc{I}(\theta_0) h } \prod_\lambda \mb{E}_0 e^{itz^{|\lambda|}Z_\lambda^2 + (\widehat{\eta}(\lambda)^T h) Z_\lambda }\\
&= e^{ \frac{1}{2}[ -h^T \mc{I}(\theta_0) h +\sum_\lambda (\widehat{\eta}(\lambda)^T h)^2]} \prod_\lambda \mb{E}_0 e^{itz^{|\lambda|}Z_\lambda^2 + (\widehat{\eta}(\lambda)^T h) Z_\lambda -\frac{1}{2} (\widehat{\eta}(\lambda)^T h)^2  } \\
&= e^{ \frac{1}{2}[ -h^T \mc{I}(\theta_0) h +\sum_\lambda (\widehat{\eta}(\lambda)^T h)^2]} \prod_\lambda  \int_{\mb{R}} e^{itz^{|\lambda|}Z_\lambda^2 + (\widehat{\eta}(\lambda)^T h) Z_\lambda -\frac{1}{2}  (\widehat{\eta}(\lambda)^T h)^2 } \frac{e^{\frac{-1}{2} Z_\lambda^2}}{\sqrt{2\pi}}  d Z_\lambda \\
&= e^{ \frac{1}{2}[ -h^T \mc{I}(\theta_0) h +\sum_\lambda (\widehat{\eta}(\lambda)^T h)^2]} \prod_\lambda  \int_{\mb{R}} e^{itz^{|\lambda|}Z_\lambda^2  } \frac{e^{\frac{-1}{2} \left( Z_\lambda^2 -2 (\widehat{\eta}(\lambda)^T h) Z_\lambda +  (\widehat{\eta}(\lambda)^T h)^2 \right)}}{\sqrt{2\pi}} d Z_\lambda \\
&= e^{ \frac{1}{2}[ -h^T \mc{I}(\theta_0) h +\sum_\lambda (\widehat{\eta}(\lambda)^T h)^2]} \prod_\lambda  \int_{\mb{R}} e^{itz^{|\lambda|}Z_\lambda^2  } \frac{e^{\frac{-1}{2} \left( Z_\lambda -  (\widehat{\eta}(\lambda)^T h) \right)^2}}{\sqrt{2\pi}} d Z_\lambda \\
&= e^{ \frac{1}{2}[ -h^T \mc{I}(\theta_0) h +\sum_\lambda (\widehat{\eta}(\lambda)^T h)^2]} \prod_\lambda \mb{E}e^{it z^{|\lambda|} U_\lambda}\\
&= e^{ \frac{1}{2}[ -h^T \mc{I}(\theta_0) h +\sum_\lambda (\widehat{\eta}(\lambda)^T h)^2]} \mb{E}e^{it\sum_\lambda z^{|\lambda|} U_\lambda}\\
&= \mb{E}e^{it\sum_\lambda z^{|\lambda|} U_\lambda}.
\end{align*}
where $U_\lambda = (Z_\lambda + \widehat{\eta}(\lambda)^T h)^2 \sim \chi_1^2\left((\widehat{\eta}(\lambda)^T h)^2\right)$ is a non-central chi-square variable on one degree of freedom with non-centrality parameter equal to $(\widehat{\eta}(\lambda)^T h)^2/2$.
The last step holds because $h^T \mc{I}(\theta_0) h  = \sum_\lambda (\widehat{\eta}(\lambda)^T h)^2$.

Thus, the limiting distribution of $T_z^{(N)}$ under the alternative $\theta_0+h/\sqrt{N}$ is
\begin{align}\label{Eq:LocalAlternativeTestStatisticsExpansionEigen}
T_z \sim \sum_{\lambda\neq 0} z^{|\lambda |} U_\lambda.
\end{align}
where $U_\lambda$ are as above.

The following proposition is an immediate consequence of the argument above.
\begin{prop}\label{Prop:AymptoticLocalPowerEigenTest} Let $c_{z,1-\alpha}$ be the asymptotic rejection threshold for $T_z^{(N)}$. That is, using Proposition \ref{NullAsymptoticDist},
\begin{align*}
\mb{P}\left(\sum_{k=1}^\infty z^k \chi^2_{p(n,k)} > c_{z,1-\alpha}\right) = \alpha,
\end{align*}
where $p(n,k)$ is the number of partitions of $k$ into at most $n$ parts and the chi-square variables are independent. 
The asymptotic power under the local alternative $\theta_0+h/\sqrt{N}$ is
\begin{align*}
\bm{\beta}(h) = \mb{P}\left(\sum_{\lambda\neq 0} z^{|\lambda |} U_\lambda > c_{z,1-\alpha}\right),
\end{align*}
for $U_\lambda$ as in (\ref{Eq:LocalAlternativeTestStatisticsExpansionEigen}).
\end{prop}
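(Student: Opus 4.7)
The plan is to read the proposition as the formalization of the characteristic-function calculation that has already been laid out in the paragraphs preceding the statement, so the proof reduces to bundling together three ingredients: (i) the null asymptotics of $T_z^{(N)}$ from Proposition \ref{NullAsymptoticDist}, (ii) a QMD expansion of the log-likelihood ratio $L_N$, and (iii) Le Cam's third lemma applied to the pair $(T_z^{(N)}, L_N)$.

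First I would establish joint convergence of $(T_z^{(N)}, L_N)$ under $\theta_0$ to the pair $(T_z, L)$ in (\ref{Def:LimitJointLocalEigen}). The marginal convergence $T_z^{(N)}\to T_z$ is Proposition \ref{NullAsymptoticDist}, obtained by viewing the vector $\sqrt{N}\widehat{\chi}_N = (\sqrt{N}\widehat{\chi}_N(\lambda))_{\lambda\neq 0}$ as a random element of the weighted sequence Hilbert space $\mathbb{D}$ and applying the second-order delta method to $\phi(\bm x) = \langle \bm x, \bm x\rangle_z$. The QMD expansion (\ref{Eq:LogLikelihoodBasicEig}) combined with the character expansion (\ref{Eq:L2ExpansionScoreEig}) of the score shows that $L_N$ is, up to $o_p(1)$, a linear functional of the same vector $\sqrt{N}\widehat{\chi}_N$ plus a deterministic term, namely $\sum_\lambda(\widehat\eta(\lambda)^T h)\sqrt{N}\widehat\chi_N(\lambda) - \tfrac{1}{2}h^T\mathcal{I}(\theta_0)h + o_p(1)$. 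Since the characters are orthonormal in $\mathcal{L}^2(f_{1,3/2,1/2})$, the CLT for $\sqrt{N}\widehat{\chi}_N$ yields independent standard Gaussian limits $Z_\lambda$, and joint convergence of $(T_z^{(N)}, L_N)$ to $(T_z, L)$ follows from continuous mapping in $\mathbb{D}\times\mathbb{R}$.

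Next I would invoke Le Cam's third lemma to identify the limiting distribution of $T_z^{(N)}$ under the local alternative $\theta_0+h/\sqrt{N}$ through the characteristic-function formula $\mathbb{E}_h e^{itT_z}=\mathbb{E}_0[e^{itT_z}e^L]$. Using independence of $\{Z_\lambda\}$, the expectation factorizes across $\lambda$; completing the square in each Gaussian integral produces a non-central chi-square factor $\mathbb{E}\exp(itz^{|\lambda|}U_\lambda)$ with $U_\lambda\sim\chi_1^2((\widehat\eta(\lambda)^T h)^2)$, multiplied by the constant $\exp\bigl(\tfrac12[-h^T\mathcal{I}(\theta_0)h+\sum_\lambda(\widehat\eta(\lambda)^T h)^2]\bigr)$. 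The Parseval identity $h^T\mathcal{I}(\theta_0)h=\sum_\lambda(\widehat\eta(\lambda)^T h)^2$, which follows from expanding $\eta(\cdot\mid\theta_0)^T h$ in the character basis and using $\mathcal{I}(\theta_0)=\mathbb{E}[\eta\eta^T]$, makes this prefactor equal to one. The resulting identity $\mathbb{E}_h e^{itT_z}=\mathbb{E}\exp\bigl(it\sum_{\lambda\neq 0}z^{|\lambda|}U_\lambda\bigr)$ is precisely the characteristic function of the distribution (\ref{Eq:LocalAlternativeTestStatisticsExpansionEigen}).

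Finally, the asymptotic power formula is just the statement that $T_z^{(N)}$ exceeds the null critical value $c_{z,1-\alpha}$ with probability converging to $\mathbb{P}(\sum_{\lambda\neq 0}z^{|\lambda|}U_\lambda>c_{z,1-\alpha})$; this is continuous mapping applied to the half-line $(c_{z,1-\alpha},\infty)$, whose boundary is a null set for the limiting distribution since the latter has a density. The main technical point, and the only step that is not routine, is verifying the joint weak convergence of the infinite-dimensional object $(\sqrt{N}\widehat{\chi}_N, L_N)$ together with the convergence of the functional $\phi(\sqrt{N}\widehat{\chi}_N)=T_z^{(N)}$; this is handled exactly as in the proof of Proposition \ref{NullAsymptoticDist}, using \citet[Theorem 1.4.8]{van1996weak} to reduce weak convergence in $\mathbb{D}$ to convergence of finite-dimensional marginals, and then applying the second-order delta method so that Le Cam's third lemma can be invoked.
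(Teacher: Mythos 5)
Your proposal follows essentially the same route as the paper: the paper derives Proposition \ref{Prop:AymptoticLocalPowerEigenTest} as an immediate consequence of the preceding Section \ref{Sec:LocalPowerEigTEstFINITEDim} computation --- the QMD expansion of $L_N$, the character expansion of the score, Le Cam's third lemma, the factorized characteristic-function calculation with the Parseval identity $h^T\mc{I}(\theta_0)h=\sum_\lambda(\widehat{\eta}(\lambda)^T h)^2$ cancelling the prefactor --- which is exactly what you reconstruct. Your added remarks on the joint convergence of $(\sqrt{N}\widehat{\chi}_N,L_N)$ in $\mb{D}\times\mb{R}$ and the continuous-mapping step for the half-line merely make explicit details the paper leaves implicit; the argument is correct.
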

\begin{example} For $\theta \in \mb{R}$ let $f(x\mid \theta) \propto \exp(\theta \Tr (x))$. Then, $\eta(x \mid 0) = \Tr(x)$ and $\widehat{\eta}(\lambda) = 0$ for $\lambda \neq (1)$. The local power under $\theta/\sqrt{N}$ is
\begin{align*}
\bm{\beta}(\theta) = \mb{P}\left(z\ \chi_1^2(\theta^2) + \sum_{k=2}^\infty z^k \chi^2_{p(n,k)} > c_{z,1-\alpha}\right).
\end{align*}
\end{example}
The results of this section can be extended to families with infinite-dimensional parameter space, under mild regularity conditions. See Example 7.13 in the supplementary material \cite{sepehri2017supplement} for an example. 

\subsection{Global asymptotic power function against local alternatives}
It is well-known that any test of goodness-of-fit exhibits poor power against local (contiguous) alternatives, except possibly in a finite number of directions. This section presents some results of this nature for $T_z^{(N)}$. For more details and statements in full generality see \citet{janssen1995principal}.

\subsubsection{Spectral decomposition of the power function}\label{Sec:PrincipalCompDecom}
Consider the standard local hypothesis setup.
For an arbitrary non-parametric unbiased test $\phi$ in the limiting Gaussian shift experiment, \citet{janssen1995principal} has shown that the curvature of the power function admits a principal component decomposition. 

Focus on $T_z^{(N)}$. Using the notation of section \ref{Sec:LocalPowerEigTEstFINITEDim}, the asymptotic power against the local alternative $H_1 : \theta = \theta_0 + h/\sqrt{N}$ is given by Proposition \ref{Prop:AymptoticLocalPowerEigenTest} as
\begin{align*}
\bm{\beta}(h) = \mb{P}\left(\sum_{\lambda\neq 0} z^{|\lambda |} U_\lambda > c_{z,1-\alpha}\right).
\end{align*}
The rejection cutoff $c_{z,1-\alpha}$ is such that $\bm{\beta}(\bm{0}) = \alpha$.
Using Theorem 1 in \citet{beran1975tail} one has the following second order Taylor expansion of $\bm{\beta}(th)$ around $t=0$
\begin{align*}
\bm{\beta}(t\cdot h) = \alpha + \frac{t^2}{2} \sum_\lambda (\widehat{\eta}(\lambda)^T h)^2 \left[ G_\lambda (c_{z,1-\alpha})- \alpha\right] +o(t^2),
\end{align*}
where $G_{\lambda}(x) = \mb{P}\left(\sum_{\mu} z^{|\mu|} V_\mu >x\right)$, and $V_\mu$ is a $\chi^2_1$ for $\mu \neq \lambda$, and $V_\lambda$ is a $\chi^2_3$ random variable.
Therefore, the curvature of the power function around $t = 0$ is
\begin{align*}
a(h) = \langle T(h), h \rangle,
\end{align*}
for the positive-definite bi-linear operator
\begin{align*}
T = \sum_\lambda \left[ G_\lambda (c_{z,1-\alpha})- \alpha \right]  \widehat{\eta}(\lambda) \widehat{\eta}(\lambda)^T.
\end{align*}
This readily gives a principal decomposition of the curvature, with principal components $\{ \widehat{\eta}(\lambda) \widehat{\eta}(\lambda)^T\}$ and eigenvalues $G_\lambda (c_{z,1-\alpha})- \alpha\ge 0$. For a fixed $z$ and $\alpha$, $G_\lambda (c_{z,1-\alpha})- \alpha$ is a decreasing function of $|\lambda|$. Thus, the highest gain in power is against those alternatives that put most of the load on principal components for smaller $|\lambda|$.
Theorem 2.1 in \citet{janssen1995principal} implies that $T$ is a Hilbert-Schmidt operator and $\|T\|^2 < 2\alpha(1-\alpha)$. This implies that any test performs poor against all alternatives except for a finite dimensional space. More details and various other statements are given in section 7 of the supplementary material \cite{sepehri2017supplement}. Local asymptotic relative efficiency and explicit bounds on the dimension of the subspace against which $T_z^{(N)}$ has power are also considered in section 7.3.3 of the supplementary material  \cite{sepehri2017supplement}.

\subsection{Asymptotic admissibility}
This section argues that the new tests of this paper are asymptotically admissible in the following sense. As discussed in section \ref{Sec:LocalPowerEigTEstFINITEDim}, there is a limiting hypothesis testing problem that captures the asymptotic properties of $T_z^{(N)}$ in the local hypothesis testing problem. That is, the limiting Gaussian process $\{Z_\lambda\}$, where under the limiting distribution corresponding to $h\in \Omega$, $Z_\lambda \sim \mc{N}(  \widehat{\eta}(\lambda)^T h,1)$. The problem is to test $H_0: h=0$. The limiting test statistic is $T_z = \sum_{\lambda\neq 0} z^{|\lambda|} Z_\lambda^2$. The main result is based on the following definition and is presented below.
\begin{defin}[Asymptotic admissibility] The sequence of test statistics $T_z^{(N)}$ is called asymptotically admissible if the limiting test $T_z$ is admissible for the limiting hypothesis testing problem.
\end{defin}
\begin{coro} The limiting tests based on $T_z$ is admissible. Therefore, the tests based on $\{T_z^{(N)}\}$ is asymptotically admissible.
\end{coro}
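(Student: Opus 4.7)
The plan is to realize the limiting test $\{T_z > c_{z,1-\alpha}\}$ as a proper Bayes test against a Gaussian prior on the alternative, and then invoke the classical fact that a proper Bayes test is admissible. This sidesteps the need for an infinite-dimensional version of the Birnbaum--Stein convexity criterion on the sequence space of means.

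Concretely, I would work in the limiting Gaussian sequence experiment identified in the corollary: independent observations $Z_\lambda \sim \mc{N}(\widehat{\eta}(\lambda)^T h, 1)$ for $\lambda \neq 0$, testing $H_0 : h = 0$. After reparametrizing $\mu_\lambda := \widehat{\eta}(\lambda)^T h$, I would place the product-Gaussian prior $\mu_\lambda \sim \mc{N}\!\bigl(0,\, z^{|\lambda|}/(1-z^{|\lambda|})\bigr)$ independently across $\lambda$. A short completion of the square identifies the Bayes factor as
\[
\Lambda(Z) \;=\; \prod_{\lambda \neq 0} (1 - z^{|\lambda|})^{1/2} \exp\!\Bigl(\tfrac{1}{2} z^{|\lambda|} Z_\lambda^2\Bigr),
\]
which is strictly monotone in $T_z = \sum_{\lambda \neq 0} z^{|\lambda|} Z_\lambda^2$. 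Consequently the rejection region $\{T_z > c_{z,1-\alpha}\}$ coincides with the Neyman--Pearson cutoff for $\Lambda$ at level $\alpha$, so the test is Bayes against the stated prior.

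The main obstacle I anticipate is verifying that this prior is truly a proper probability measure and that $\Lambda$ is well-defined almost surely. Both reduce to $\sum_{\lambda \neq 0} z^{|\lambda|} < \infty$, which follows from the identity $\sum_{k \ge 0} p(n,k)\, z^k = \prod_{i=1}^n (1-z^i)^{-1}$ used in the remark after Proposition~\ref{NullAsymptoticDist} and is finite for $z \in (0,1)$. A secondary technical point, when the parameter space $\Omega$ is finite-dimensional so that $(\mu_\lambda)$ lies in a proper subspace, is to lift the product prior to a Gaussian prior on $\Omega$ via the Fisher-information inner product; this is possible because the score functions $\{\widehat{\eta}(\lambda)\}$ form an orthogonal system in $\mc{L}^2(f_{1,3/2,1/2})$, as used in the derivation of the local power.

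Once $T_z$ is identified with a proper Bayes test, admissibility is immediate from the standard contrapositive: if some test of the same size weakly dominated $\{T_z > c_{z,1-\alpha}\}$ on $H_1$, its Bayes risk would be strictly smaller, contradicting the Bayes optimality of $T_z$. By the definition of asymptotic admissibility given just above the corollary, this yields the claim for the sequence $\{T_z^{(N)}\}$. An entirely analogous construction, with the variances $z^{|\lambda|}/(1-z^{|\lambda|})$ replaced by the appropriate quantity derived from the weights $c_\lambda(z,q)/d_\lambda$ of section~\ref{Sec:AsymNullAltFullTest}, shows that the companion test $U_{z,q}^{(N)}$ is also asymptotically admissible.
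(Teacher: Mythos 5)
Your argument is essentially correct, but it follows a genuinely different route from the paper. The paper disposes of the corollary in one step by quoting the Birnbaum--Stein criterion (Strasser, Theorem 30.4): in the limiting Gaussian shift experiment on a Hilbert space, any test whose acceptance region is closed and convex is admissible, and the region $\{\sum_\lambda z^{|\lambda|} Z_\lambda^2 \le c_{z,1-\alpha}\}$ is manifestly closed and convex. You instead exhibit $T_z$ as a proper Bayes test: the product prior $\mu_\lambda \sim \mc{N}\bigl(0, z^{|\lambda|}/(1-z^{|\lambda|})\bigr)$ is a genuine probability measure because $\sum_\lambda z^{|\lambda|} = \prod_{i=1}^n (1-z^i)^{-1} - 1 < \infty$, your Bayes factor computation is correct, it is strictly increasing in $T_z$, and since the likelihood ratio has a continuous distribution the Neyman--Pearson test at level $\alpha$ is a.e.\ unique; any test weakly dominating $\{T_z > c_{z,1-\alpha}\}$ would also be most powerful against the mixture, hence equal a.e., hence have the identical power function (the shifts with $h^T\mc{I}(\theta_0)h < \infty$ are mutually absolutely continuous with the null), contradicting strict domination. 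This is a complete and classical argument in the infinite-dimensional shift experiment, which is exactly the setting in which the paper's quoted lemma operates; what the paper's convexity route buys is brevity and the extra conclusion that the test is uniquely determined by its power function, while your Bayes route buys a self-contained proof that does not rely on the (nontrivial) infinite-dimensional Birnbaum--Stein theorem and makes explicit which prior the test is optimal against.

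One caveat: your ``secondary technical point'' about lifting the product prior to a Gaussian prior on a finite-dimensional $\Omega$ does not work and should be dropped. If the means are constrained to the finite-dimensional manifold $\{(\widehat{\eta}(\lambda)^T h)_\lambda : h \in \Omega\}$, then for any prior on $\Omega$ the Bayes factor is monotone in a rank-$k$ quadratic form in the projections of $Z$ onto the span of the score directions, never in the full-rank statistic $T_z$; so $T_z$ is not proper Bayes within such a submodel (and indeed its admissibility there is a different, more delicate question). Fortunately this repair is unnecessary: the admissibility asserted in the corollary, like the paper's own proof, is admissibility in the full Gaussian shift experiment on the sequence space, where your product-prior argument already applies verbatim, and the same construction with variances built from $c_\lambda(z,q)/d_\lambda$ handles $U_{z,q}^{(N)}$ as you say.
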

\begin{proof}
The proof is based on the following result of \citet{birnbaum1955characterizations}.
\begin{lemma}[\citet{strasser1985mathematical}, Theorem 30.4] \label{Lemma:BirnBaum} Let $C\subset \mb{H}$ be a closed convex subset. Then,
\begin{align*}
\phi(x) = \begin{cases} 1 \quad \text{if} \quad x \notin  C,\\
0 \quad \text{if} \quad x \in C,
\end{cases}
\end{align*}
is admissible for the testing problem $h=0$ against $h\neq 0$ and is uniquely determined by its power function.
\end{lemma}
Note that the test based on $T_z$ rejects for $\{\bm{Z}\in \mb{R}^\infty \mid \sum_\lambda z^{|\lambda|} Z_\lambda^2 > c_{z,1-\alpha}\}$. The set
\begin{align*}
C_z = \{\bm{Z}\in \mb{R}^\infty \mid \sum_\lambda z^{|\lambda|} Z_\lambda^2 \le c_{z,1-\alpha}\}
\end{align*}
is clearly convex and closed. Thus the assertion follows from Lemma \ref{Lemma:BirnBaum}.
\end{proof}
A similar statement is true for $\{U_{z,q}^{(N)}\}$ and is omitted here.
\section{Other Compact Groups}\label{Sec:OtherGroups}
The compact classical groups fall into four general classes:
\begin{enumerate}
\item \emph{Type A:} $U(n)$ and $SL(n)$.
\item \emph{Type B:} $SO(2n+1)$.
\item \emph{Type C:} $Sp(2n)$.
\item \emph{Type D:} $SO(2n)$.
\end{enumerate}
For each type the analogous tests to $T_z^{(N)}$ and $U_{z,q}^{(N)}$ are introduced in this section; only the definitions and explicit formulas are provided.
Derivation of the asymptotic null and alternative distributions, and local power are identical to those for $SO(2n+1)$, hence omitted here. Note that each case requires particular facts and considerations from representation theory; details, derivations, and proofs are provided in section 2 of the supplementary material \cite{sepehri2017supplement}.
\subsection{The test based on the eigenvalues}
For the groups of type A, \citet{coram2003new} introduced a test which inspired the test based on $T_z^{(N)}$ of the present paper. The case of type B groups ($SO(2n+1)$) was discussed in section \ref{Sec:EigenTest}. Type C and D are discussed below.
\subsubsection{Type C}
For $0<z<1$ define the test statistics $T_{C,z}^{(N)}$ as 
\begin{align*}
T_{C,z}^{(N)} = \frac{1}{N} \sum_{i=1}^N \sum_{j=1}^N K^C_z(g_i,g_j),
\end{align*}
where $K_z^C$ is as in \ref{Def:KernelGeneral}. A closed from for $K_z^C (g,h)$ can be found using the Cauchy identity for the symplectic group \citep[Theorem 2.7]{sepehri2017supplement} as follows:
\begin{align*}
K_z^C (g,h)  = \frac{(1-z^2)^m \det\left( \frac{1}{(1-z x_i y_j)(1-z x_i^{-1} y_j)(1-z x_i y_j^{-1})(1-z x_i^{-1} y_j^{-1})}\right)}{z^{\binom{m}{2}}\prod_{i<j} \left(y_i+y_i^{-1}-(y_j+y_j^{-1})\right) \prod_{i<j} \left(x_i+x_i^{-1}-(x_j+x_j^{-1})\right)}-1,
\end{align*}
where $\{x_i^\pm\}$ and $\{y_i^\pm\}$ are eigenvalues of $g$ and $h$ respectively.

\subsubsection{Type D}\label{Sec:TypeD-eigenTest} 
For $0<z<1$ define the test statistics $T_{D,z}^{(N)}$ as 
\begin{align*}
T_{D,z}^{(N)} =\frac{1}{N} \sum_{i=1}^N \sum_{j=1}^N K^D_z(g_i,g_j),
\end{align*}
where $K_z^D$ is as in \ref{Def:KernelGeneral}. A closed from for $K_z^D (g,h)$ is given by the Cauchy identity for $SO(2n)$ \citep[Theorem 2.9]{sepehri2017supplement} as follows:
\begin{align*}
K_z^D (g,h)  =  \frac{\det\left(\frac{1}{1-z x_i y_j}+\frac{1}{1-z x_i^{-1} y_j}+\frac{1}{1-z x_i y_j^{-1}}+\frac{1}{1-z x_i^{-1} y_j^{-1}}\right)}{z^{\binom{m}{2}}\prod_{i<j} \left(y_i+y_i^{-1}-(y_j+y_j^{-1})\right) \prod_{i<j} \left(x_i+x_i^{-1}-(x_j+x_j^{-1})\right)}-1,
\end{align*}
where $\{x_i^\pm\}$ and $\{y_i^\pm\}$ are eigenvalues of $g$ and $h$ respectively.

\subsection{The test beyond the eigenvalues} A test similar to $U_{z,q}^{(N)}$ can be constructed for all compact groups. With abuse of notation, these tests all are denoted by $U_{z,q}^{(N)}$. The case of $SO(2n+1)$ is already discussed in section \ref{Sec:TestBeyondEigenValues}. The other cases are considered in this section. Only the definitions and explicit formulas are presented here. A detailed derivation and required facts from representation theory, as well as the proofs, are provided in section 2 of the supplementary material \cite{sepehri2017supplement}.

\subsubsection{Type A} 
For $U(n)$, the test analogous to $U_{z,q}^{(N)}$ is defined as
\begin{align*}
U_{z,q}^{(N)} 
&=  \frac{1}{N} \sum_{i,j=1}^N \left(  \prod_{l,k=1}^{n} \frac{1}{1-zq^{l-1} y_k^{i,j}} -1\right),
\end{align*}
where $ y_1^{i,j}, \ldots,  y_n^{i,j}$ are the eigenvalues of $g_i^\ast g_j$.

\subsubsection{Type C} 
Definition for groups of type C is as follows
\begin{align*}
U_{z,q}^{(N)} &= \frac{1}{N}\sum_{k,l=1}^N  \left(\frac{\prod_{i<j}(1-z^2 q^{i+j-2})}{\prod_{i,j} (1-z q^{i-1} y_j^{k,l})(1-z q^{i-1} (y_j^{k,l})^{-1})} -1\right),
\end{align*}
where $\{y_j^{k,l}, (y_j^{k,l})^{-1}\mid j=1,\ldots,n \}$ are the eigenvalues of $g_k g_l^T$.

\subsubsection{Type D}
Definition for groups of type D is as follows
\begin{align*}
U_{z,q}^{(N)} &= \frac{1}{N}\sum_{k,l=1}^N  \left(\frac{\prod_{i\le j}(1-z^2 q^{i+j-2})}{\prod_{i,j} (1-z q^{i-1} y_j^{k,l})(1-z q^{i-1} (y_j^{k,l})^{-1})} -1\right),
\end{align*}
where $\{y_j^{k,l}, (y_j^{k,l})^{-1}\mid j=1,\ldots,n \}$ are the eigenvalues of $g_k g_l^T$.

\section{Discussion}
The current paper introduces and analyzes two new families of tests of uniformity on the compact classical groups. These tests are validated on two benchmark examples: the random walk of Kac and the products of random reflections. They exhibit satisfying agreement with the existing theory about the mixing-time of both random walks. The new tests, and several others, are applied to the new sampler of \citet{jones2011randomized}; all but one of the new tests failed to reject the null hypothesis of uniformity after any number of iterations of the new sampler. One of the new tests confirmed the prescribed number of steps to be used with the sampler in order to get approximately uniform outputs.

\section{Acknowledgments} I am greatly indebted to my doctoral advisor, Persi Diaconis, for suggesting the problem and his continuing guidance and support. I thank Daniel Bump for discussions on representation theory; Arun Ram for sharing his notes on the character theory and symmetric function theory; Sourav Chatterjee and David Siegmund for helpful comments and discussions; Matan Gavish for sharing his course report on numerical investigation of the mixing-time for a Markov chain sampler on the unitary group. I would like to express my special thanks to the associate editor and the anonymous referee because of their useful comments that improved this paper significantly. In particular, Remark 4.8 was the result of a suggestion by the associate editor.

\bibliography{TestingOnClassicalGroups}

\begin{thebibliography}{49}

\bibitem[\protect\citeauthoryear{Abramowitz and
  Stegun}{1964}]{abramowitz1964handbook}
\begin{bbook}[author]
\bauthor{\bsnm{Abramowitz},~\bfnm{Milton}\binits{M.}} \AND
  \bauthor{\bsnm{Stegun},~\bfnm{Irene~A}\binits{I.~A.}}
(\byear{1964}).
\btitle{Handbook of mathematical functions: with formulas, graphs, and
  mathematical tables}
\bvolume{55}.
\bpublisher{Courier Corporation}.
\end{bbook}
\endbibitem

\bibitem[\protect\citeauthoryear{Ajne}{1968}]{ajne1968simple}
\begin{barticle}[author]
\bauthor{\bsnm{Ajne},~\bfnm{Bj{\"o}rn}\binits{B.}}
(\byear{1968}).
\btitle{A simple test for uniformity of a circular distribution}.
\bjournal{Biometrika}
\bvolume{55}
\bpages{343--354}.
\end{barticle}
\endbibitem

\bibitem[\protect\citeauthoryear{Andrews}{1998}]{andrews1998theory}
\begin{bbook}[author]
\bauthor{\bsnm{Andrews},~\bfnm{George~E}\binits{G.~E.}}
(\byear{1998}).
\btitle{The theory of partitions}
\bvolume{2}.
\bpublisher{Cambridge university press}.
\end{bbook}
\endbibitem

\bibitem[\protect\citeauthoryear{Arias-Castro, Pelletier and
  Saligrama}{2016}]{arias2016remember}
\begin{barticle}[author]
\bauthor{\bsnm{Arias-Castro},~\bfnm{Ery}\binits{E.}},
  \bauthor{\bsnm{Pelletier},~\bfnm{Bruno}\binits{B.}} \AND
  \bauthor{\bsnm{Saligrama},~\bfnm{Venkatesh}\binits{V.}}
(\byear{2016}).
\btitle{Remember the Curse of Dimensionality: The Case of Goodness-of-Fit
  Testing in Arbitrary Dimension}.
\bjournal{arXiv preprint arXiv:1607.08156}.
\end{barticle}
\endbibitem

\bibitem[\protect\citeauthoryear{Baringhaus}{1991}]{baringhaus1991testing}
\begin{barticle}[author]
\bauthor{\bsnm{Baringhaus},~\bfnm{Ludwig}\binits{L.}}
(\byear{1991}).
\btitle{Testing for spherical symmetry of a multivariate distribution}.
\bjournal{The Annals of Statistics}
\bpages{899--917}.
\end{barticle}
\endbibitem

\bibitem[\protect\citeauthoryear{Beran}{1968}]{beran1968testing}
\begin{barticle}[author]
\bauthor{\bsnm{Beran},~\bfnm{Rudolf}\binits{R.}}
(\byear{1968}).
\btitle{Testing for uniformity on a compact homogeneous space}.
\bjournal{Journal of Applied Probability}
\bvolume{5}
\bpages{177--195}.
\end{barticle}
\endbibitem

\bibitem[\protect\citeauthoryear{Beran}{1975}]{beran1975tail}
\begin{barticle}[author]
\bauthor{\bsnm{Beran},~\bfnm{Rudolf}\binits{R.}}
(\byear{1975}).
\btitle{Tail probabilities of noncentral quadratic forms}.
\bjournal{The Annals of Statistics}
\bvolume{3}
\bpages{969--974}.
\end{barticle}
\endbibitem

\bibitem[\protect\citeauthoryear{Birnbaum}{1955}]{birnbaum1955characterizations}
\begin{barticle}[author]
\bauthor{\bsnm{Birnbaum},~\bfnm{Allan}\binits{A.}}
(\byear{1955}).
\btitle{Characterizations of complete classes of tests of some multiparametric
  hypotheses, with applications to likelihood ratio tests}.
\bjournal{The Annals of Mathematical Statistics}
\bpages{21--36}.
\end{barticle}
\endbibitem

\bibitem[\protect\citeauthoryear{Bump}{2004}]{bump2004lie}
\begin{bbook}[author]
\bauthor{\bsnm{Bump},~\bfnm{Daniel}\binits{D.}}
(\byear{2004}).
\btitle{Lie groups}.
\bpublisher{Springer}.
\end{bbook}
\endbibitem

\bibitem[\protect\citeauthoryear{Coram and Diaconis}{2003}]{coram2003new}
\begin{barticle}[author]
\bauthor{\bsnm{Coram},~\bfnm{Marc}\binits{M.}} \AND
  \bauthor{\bsnm{Diaconis},~\bfnm{Persi}\binits{P.}}
(\byear{2003}).
\btitle{New tests of the correspondence between unitary eigenvalues and the
  zeros of Riemann's zeta function}.
\bjournal{Journal of Physics A: Mathematical and General}
\bvolume{36}
\bpages{2883}.
\end{barticle}
\endbibitem

\bibitem[\protect\citeauthoryear{Diaconis}{2003}]{diaconis2003patterns}
\begin{barticle}[author]
\bauthor{\bsnm{Diaconis},~\bfnm{Persi}\binits{P.}}
(\byear{2003}).
\btitle{Patterns in eigenvalues: the 70th Josiah Willard Gibbs lecture}.
\bjournal{Bulletin of the American Mathematical Society}
\bvolume{40}
\bpages{155--178}.
\end{barticle}
\endbibitem

\bibitem[\protect\citeauthoryear{Diaconis and
  Mallows}{1986}]{diaconis1990trace}
\begin{barticle}[author]
\bauthor{\bsnm{Diaconis},~\bfnm{P}\binits{P.}} \AND
  \bauthor{\bsnm{Mallows},~\bfnm{C}\binits{C.}}
(\byear{1986}).
\btitle{On the trace of random orthogonal matrices}.
\bjournal{Unpublished manuscript. Results summarized in Diaconis (1990)}.
\end{barticle}
\endbibitem

\bibitem[\protect\citeauthoryear{Diaconis and
  Shahshahani}{1986}]{diaconis1986products}
\begin{barticle}[author]
\bauthor{\bsnm{Diaconis},~\bfnm{Persi}\binits{P.}} \AND
  \bauthor{\bsnm{Shahshahani},~\bfnm{Mehrdad}\binits{M.}}
(\byear{1986}).
\btitle{Products of random matrices as they arise in the study of random walks
  on groups}.
\bjournal{Contemp. Math}
\bvolume{50}
\bpages{183--195}.
\end{barticle}
\endbibitem

\bibitem[\protect\citeauthoryear{Diaconis and
  Shahshahani}{1987}]{diaconis1987subgroup}
\begin{barticle}[author]
\bauthor{\bsnm{Diaconis},~\bfnm{Persi}\binits{P.}} \AND
  \bauthor{\bsnm{Shahshahani},~\bfnm{Mehrdad}\binits{M.}}
(\byear{1987}).
\btitle{The subgroup algorithm for generating uniform random variables}.
\bjournal{Probability in the engineering and informational sciences}
\bvolume{1}
\bpages{15--32}.
\end{barticle}
\endbibitem

\bibitem[\protect\citeauthoryear{Downs}{1972}]{downs1972orientation}
\begin{barticle}[author]
\bauthor{\bsnm{Downs},~\bfnm{Thomas~D}\binits{T.~D.}}
(\byear{1972}).
\btitle{Orientation statistics}.
\bjournal{Biometrika}
\bvolume{59}
\bpages{665--676}.
\end{barticle}
\endbibitem

\bibitem[\protect\citeauthoryear{Gin{\'e}}{1975a}]{gine1975invariant}
\begin{barticle}[author]
\bauthor{\bsnm{Gin{\'e}},~\bfnm{Evarist~M}\binits{E.~M.}}
(\byear{1975}a).
\btitle{Invariant tests for uniformity on compact Riemannian manifolds based on
  Sobolev norms}.
\bjournal{The Annals of statistics}
\bpages{1243--1266}.
\end{barticle}
\endbibitem

\bibitem[\protect\citeauthoryear{Gin{\'e}}{1975b}]{gine1975addition}
\begin{barticle}[author]
\bauthor{\bsnm{Gin{\'e}},~\bfnm{Evarist~M}\binits{E.~M.}}
(\byear{1975}b).
\btitle{The addition formula for the eigenfunctions of the Laplacian}.
\bjournal{Advances in Mathematics}
\bvolume{18}
\bpages{102--107}.
\end{barticle}
\endbibitem

\bibitem[\protect\citeauthoryear{Goodman and
  Wallach}{2009}]{goodman2009symmetry}
\begin{bbook}[author]
\bauthor{\bsnm{Goodman},~\bfnm{Roe}\binits{R.}} \AND
  \bauthor{\bsnm{Wallach},~\bfnm{Nolan~R}\binits{N.~R.}}
(\byear{2009}).
\btitle{Symmetry, representations, and invariants}
\bvolume{66}.
\bpublisher{Springer}.
\end{bbook}
\endbibitem

\bibitem[\protect\citeauthoryear{Hastings}{1970}]{hastings1970monte}
\begin{barticle}[author]
\bauthor{\bsnm{Hastings},~\bfnm{W~Keith}\binits{W.~K.}}
(\byear{1970}).
\btitle{Monte Carlo sampling methods using Markov chains and their
  applications}.
\bjournal{Biometrika}
\bvolume{57}
\bpages{97--109}.
\end{barticle}
\endbibitem

\bibitem[\protect\citeauthoryear{Hermans and Rasson}{1985}]{hermans1985new}
\begin{barticle}[author]
\bauthor{\bsnm{Hermans},~\bfnm{M}\binits{M.}} \AND
  \bauthor{\bsnm{Rasson},~\bfnm{JP}\binits{J.}}
(\byear{1985}).
\btitle{A new Sobolev test for uniformity on the circle}.
\bjournal{Biometrika}
\bvolume{72}
\bpages{698--702}.
\end{barticle}
\endbibitem

\bibitem[\protect\citeauthoryear{Janssen}{1995}]{janssen1995principal}
\begin{barticle}[author]
\bauthor{\bsnm{Janssen},~\bfnm{Arnold}\binits{A.}}
(\byear{1995}).
\btitle{Principal component decomposition of non-parametric tests}.
\bjournal{Probability theory and related fields}
\bvolume{101}
\bpages{193--209}.
\end{barticle}
\endbibitem

\bibitem[\protect\citeauthoryear{Jones, Osipov and
  Rokhlin}{2011}]{jones2011randomized}
\begin{barticle}[author]
\bauthor{\bsnm{Jones},~\bfnm{Peter~W}\binits{P.~W.}},
  \bauthor{\bsnm{Osipov},~\bfnm{Andrei}\binits{A.}} \AND
  \bauthor{\bsnm{Rokhlin},~\bfnm{Vladimir}\binits{V.}}
(\byear{2011}).
\btitle{Randomized approximate nearest neighbors algorithm}.
\bjournal{Proceedings of the National Academy of Sciences}
\bvolume{108}
\bpages{15679--15686}.
\end{barticle}
\endbibitem

\bibitem[\protect\citeauthoryear{Jupp and Spurr}{1983}]{jupp1983sobolev}
\begin{barticle}[author]
\bauthor{\bsnm{Jupp},~\bfnm{PE}\binits{P.}} \AND
  \bauthor{\bsnm{Spurr},~\bfnm{BD}\binits{B.}}
(\byear{1983}).
\btitle{Sobolev tests for symmetry of directional data}.
\bjournal{The Annals of Statistics}
\bpages{1225--1231}.
\end{barticle}
\endbibitem

\bibitem[\protect\citeauthoryear{Jupp and Spurr}{1985}]{jupp1985sobolev}
\begin{barticle}[author]
\bauthor{\bsnm{Jupp},~\bfnm{PE}\binits{P.}} \AND
  \bauthor{\bsnm{Spurr},~\bfnm{BD}\binits{B.}}
(\byear{1985}).
\btitle{Sobolev tests for independence of directions}.
\bjournal{The Annals of Statistics}
\bpages{1140--1155}.
\end{barticle}
\endbibitem

\bibitem[\protect\citeauthoryear{Kac}{1959}]{kac1959probability}
\begin{bbook}[author]
\bauthor{\bsnm{Kac},~\bfnm{Mark}\binits{M.}}
(\byear{1959}).
\btitle{Probability and related topics in physical sciences}
\bvolume{1}.
\bpublisher{American Mathematical Soc.}
\end{bbook}
\endbibitem

\bibitem[\protect\citeauthoryear{Kerkyacharian, Nickl and
  Picard}{2012}]{kerkyacharian2012concentration}
\begin{barticle}[author]
\bauthor{\bsnm{Kerkyacharian},~\bfnm{Gerard}\binits{G.}},
  \bauthor{\bsnm{Nickl},~\bfnm{Richard}\binits{R.}} \AND
  \bauthor{\bsnm{Picard},~\bfnm{Dominique}\binits{D.}}
(\byear{2012}).
\btitle{Concentration inequalities and confidence bands for needlet density
  estimators on compact homogeneous manifolds}.
\bjournal{Probability Theory and Related Fields}
\bvolume{153}
\bpages{363--404}.
\end{barticle}
\endbibitem

\bibitem[\protect\citeauthoryear{Lehmann and Romano}{2006}]{lehmann2006testing}
\begin{bbook}[author]
\bauthor{\bsnm{Lehmann},~\bfnm{Erich~L}\binits{E.~L.}} \AND
  \bauthor{\bsnm{Romano},~\bfnm{Joseph~P}\binits{J.~P.}}
(\byear{2006}).
\btitle{Testing statistical hypotheses}.
\bpublisher{Springer Science \& Business Media}.
\end{bbook}
\endbibitem

\bibitem[\protect\citeauthoryear{Liberty et~al.}{2007}]{liberty2007randomized}
\begin{barticle}[author]
\bauthor{\bsnm{Liberty},~\bfnm{Edo}\binits{E.}},
  \bauthor{\bsnm{Woolfe},~\bfnm{Franco}\binits{F.}},
  \bauthor{\bsnm{Martinsson},~\bfnm{Per-Gunnar}\binits{P.-G.}},
  \bauthor{\bsnm{Rokhlin},~\bfnm{Vladimir}\binits{V.}} \AND
  \bauthor{\bsnm{Tygert},~\bfnm{Mark}\binits{M.}}
(\byear{2007}).
\btitle{Randomized algorithms for the low-rank approximation of matrices}.
\bjournal{Proceedings of the National Academy of Sciences}
\bvolume{104}
\bpages{20167--20172}.
\end{barticle}
\endbibitem

\bibitem[\protect\citeauthoryear{Mardia and Jupp}{2000}]{mardiadirectional}
\begin{bmisc}[author]
\bauthor{\bsnm{Mardia},~\bfnm{KV}\binits{K.}} \AND
  \bauthor{\bsnm{Jupp},~\bfnm{PE}\binits{P.}}
(\byear{2000}).
\btitle{Directional statistics}.
\end{bmisc}
\endbibitem

\bibitem[\protect\citeauthoryear{Mehta}{2004}]{mehta2004random}
\begin{bbook}[author]
\bauthor{\bsnm{Mehta},~\bfnm{Madan~Lal}\binits{M.~L.}}
(\byear{2004}).
\btitle{Random matrices}
\bvolume{142}.
\bpublisher{Academic press}.
\end{bbook}
\endbibitem

\bibitem[\protect\citeauthoryear{Oliveira}{2009}]{oliveira2009convergence}
\begin{barticle}[author]
\bauthor{\bsnm{Oliveira},~\bfnm{Roberto~I}\binits{R.~I.}}
(\byear{2009}).
\btitle{On the convergence to equilibrium of Kac's random walk on matrices}.
\bjournal{The Annals of Applied Probability}
\bpages{1200--1231}.
\end{barticle}
\endbibitem

\bibitem[\protect\citeauthoryear{Pak and Sidenko}{2007}]{pak2007convergence}
\begin{barticle}[author]
\bauthor{\bsnm{Pak},~\bfnm{Igor}\binits{I.}} \AND
  \bauthor{\bsnm{Sidenko},~\bfnm{Sergiy}\binits{S.}}
(\byear{2007}).
\btitle{Convergence of Kac's random walk}.
\bjournal{Preprint available from http://www-math. mit. edu/\~{} pak/research.
  html}.
\end{barticle}
\endbibitem

\bibitem[\protect\citeauthoryear{Pillai and Smith}{2016}]{pillai2016mixing}
\begin{barticle}[author]
\bauthor{\bsnm{Pillai},~\bfnm{Natesh~S}\binits{N.~S.}} \AND
  \bauthor{\bsnm{Smith},~\bfnm{Aaron}\binits{A.}}
(\byear{2016}).
\btitle{On the Mixing Time of Kac's Walk and Other High-Dimensional Gibbs
  Samplers with Constraints}.
\bjournal{arXiv preprint arXiv:1605.08122}.
\end{barticle}
\endbibitem

\bibitem[\protect\citeauthoryear{Porod}{1996}]{porod1996cut}
\begin{barticle}[author]
\bauthor{\bsnm{Porod},~\bfnm{Ursula}\binits{U.}}
(\byear{1996}).
\btitle{The cut-off phenomenon for random reflections}.
\bjournal{The Annals of Probability}
\bvolume{24}
\bpages{74--96}.
\end{barticle}
\endbibitem

\bibitem[\protect\citeauthoryear{Prentice}{1978}]{prentice1978invariant}
\begin{barticle}[author]
\bauthor{\bsnm{Prentice},~\bfnm{MJ}\binits{M.}}
(\byear{1978}).
\btitle{On invariant tests of uniformity for directions and orientations}.
\bjournal{The Annals of Statistics}
\bpages{169--176}.
\end{barticle}
\endbibitem

\bibitem[\protect\citeauthoryear{Rayleigh}{1880}]{rayleigh1880xii}
\begin{barticle}[author]
\bauthor{\bsnm{Rayleigh},~\bfnm{Lord}\binits{L.}}
(\byear{1880}).
\btitle{XII. On the resultant of a large number of vibrations of the same pitch
  and of arbitrary phase}.
\bjournal{The London, Edinburgh, and Dublin Philosophical Magazine and Journal
  of Science}
\bvolume{10}
\bpages{73--78}.
\end{barticle}
\endbibitem

\bibitem[\protect\citeauthoryear{Rokhlin and Tygert}{2008}]{rokhlin2008fast}
\begin{barticle}[author]
\bauthor{\bsnm{Rokhlin},~\bfnm{Vladimir}\binits{V.}} \AND
  \bauthor{\bsnm{Tygert},~\bfnm{Mark}\binits{M.}}
(\byear{2008}).
\btitle{A fast randomized algorithm for overdetermined linear least-squares
  regression}.
\bjournal{Proceedings of the National Academy of Sciences}
\bvolume{105}
\bpages{13212--13217}.
\end{barticle}
\endbibitem

\bibitem[\protect\citeauthoryear{R{\"o}misch}{2005}]{romisch2005delta}
\begin{barticle}[author]
\bauthor{\bsnm{R{\"o}misch},~\bfnm{Werner}\binits{W.}}
(\byear{2005}).
\btitle{Delta method, infinite dimensional}.
\bjournal{Encyclopedia of statistical sciences}.
\end{barticle}
\endbibitem

\bibitem[\protect\citeauthoryear{Rosenthal}{1994}]{rosenthal1994random}
\begin{barticle}[author]
\bauthor{\bsnm{Rosenthal},~\bfnm{Jeffrey~S}\binits{J.~S.}}
(\byear{1994}).
\btitle{Random rotations: characters and random walks on SO (n)}.
\bjournal{The Annals of Probability}
\bpages{398--423}.
\end{barticle}
\endbibitem

\bibitem[\protect\citeauthoryear{Sengupta and Pal}{2001}]{sengupta2001optimal}
\begin{barticle}[author]
\bauthor{\bsnm{Sengupta},~\bfnm{Ashis}\binits{A.}} \AND
  \bauthor{\bsnm{Pal},~\bfnm{Chandranath}\binits{C.}}
(\byear{2001}).
\btitle{On optimal tests for isotropy against the symmetric wrapped
  stable-circular uniform mixture family}.
\bjournal{Journal of Applied Statistics}
\bvolume{28}
\bpages{129--143}.
\end{barticle}
\endbibitem

\bibitem[\protect\citeauthoryear{Sepehri}{2017}]{sepehri2017supplement}
\begin{barticle}[author]
\bauthor{\bsnm{Sepehri},~\bfnm{Amir}\binits{A.}}
(\byear{2017}).
\btitle{Supplement to "New Tests of Uniformity on the Compact Classical Groups
  as Diagnostics for Weak-$^*$ Mixing of Markov Chains"}.
\end{barticle}
\endbibitem

\bibitem[\protect\citeauthoryear{Strasser}{1985}]{strasser1985mathematical}
\begin{bbook}[author]
\bauthor{\bsnm{Strasser},~\bfnm{Helmut}\binits{H.}}
(\byear{1985}).
\btitle{Mathematical theory of statistics: statistical experiments and
  asymptotic decision theory}
\bvolume{7}.
\bpublisher{Walter de Gruyter}.
\end{bbook}
\endbibitem

\bibitem[\protect\citeauthoryear{Thomson, Bruckner and
  Bruckner}{2008}]{thomson2008elementary}
\begin{bbook}[author]
\bauthor{\bsnm{Thomson},~\bfnm{Brian~S}\binits{B.~S.}},
  \bauthor{\bsnm{Bruckner},~\bfnm{Judith~B}\binits{J.~B.}} \AND
  \bauthor{\bsnm{Bruckner},~\bfnm{Andrew~M}\binits{A.~M.}}
(\byear{2008}).
\btitle{Elementary real analysis}.
\bpublisher{ClassicalRealAnalysis. com}.
\end{bbook}
\endbibitem

\bibitem[\protect\citeauthoryear{Van Der~Vaart and Wellner}{1996}]{van1996weak}
\begin{bincollection}[author]
\bauthor{\bsnm{Van Der~Vaart},~\bfnm{Aad~W}\binits{A.~W.}} \AND
  \bauthor{\bsnm{Wellner},~\bfnm{Jon~A}\binits{J.~A.}}
(\byear{1996}).
\btitle{Weak convergence}.
In \bbooktitle{Weak Convergence and Empirical Processes}
\bpages{16--28}.
\bpublisher{Springer}.
\end{bincollection}
\endbibitem

\bibitem[\protect\citeauthoryear{Watson}{1961}]{watson1961goodness}
\begin{barticle}[author]
\bauthor{\bsnm{Watson},~\bfnm{Geoffrey~S}\binits{G.~S.}}
(\byear{1961}).
\btitle{Goodness-of-fit tests on a circle}.
\bjournal{Biometrika}
\bvolume{48}
\bpages{109--114}.
\end{barticle}
\endbibitem

\bibitem[\protect\citeauthoryear{Watson}{1962}]{watson1962goodness}
\begin{barticle}[author]
\bauthor{\bsnm{Watson},~\bfnm{Geoffrey~S}\binits{G.~S.}}
(\byear{1962}).
\btitle{Goodness-of-fit tests on a circle. II}.
\bjournal{Biometrika}
\bvolume{49}
\bpages{57--63}.
\end{barticle}
\endbibitem

\bibitem[\protect\citeauthoryear{Watson}{1967}]{watson1967another}
\begin{barticle}[author]
\bauthor{\bsnm{Watson},~\bfnm{Geoffrey~S}\binits{G.~S.}}
(\byear{1967}).
\btitle{Another test for the uniformity of a circular distribution}.
\bjournal{Biometrika}
\bvolume{54}
\bpages{675--677}.
\end{barticle}
\endbibitem

\bibitem[\protect\citeauthoryear{Wellner}{1979}]{wellner1979permutation}
\begin{barticle}[author]
\bauthor{\bsnm{Wellner},~\bfnm{Jon~A}\binits{J.~A.}}
(\byear{1979}).
\btitle{Permutation tests for directional data}.
\bjournal{The Annals of Statistics}
\bpages{929--943}.
\end{barticle}
\endbibitem

\bibitem[\protect\citeauthoryear{Weyl}{1946}]{weyl1946classical}
\begin{bbook}[author]
\bauthor{\bsnm{Weyl},~\bfnm{Hermann}\binits{H.}}
(\byear{1946}).
\btitle{The Classical Groups, Their Invariants and Representations}.
\bpublisher{Princeton University Press}.
\end{bbook}
\endbibitem

\end{thebibliography}


\end{document}